\documentclass[10pt,a4paper,leqno]{amsart}
\usepackage{amsmath}
\usepackage{amssymb}
\usepackage{amsfonts}
\usepackage{amsthm}
\usepackage{mathrsfs}
\usepackage{dsfont}
\usepackage{mathtools}
\usepackage{bm}

\usepackage[textsize=tiny,color=green!40]{todonotes}
\usepackage[backref=page]{hyperref}
\usepackage{color}

\definecolor{darkgreen}{rgb}{0.5,0.25,0}
\definecolor{darkblue}{rgb}{0,0,1}
\definecolor{answerblue}{rgb}{0,0,0.75}

\hypersetup{colorlinks,breaklinks,
linkcolor=darkblue,urlcolor=darkblue,
anchorcolor=darkblue,citecolor=darkblue}

\newcommand*{\mailto}[1]{\href{mailto:#1}{\nolinkurl{#1}}}

\theoremstyle{plain}
\newtheorem{theorem}{Theorem}
\newtheorem{lemma}[theorem]{Lemma}

\theoremstyle{definition}
\newtheorem{definition}[theorem]{Definition}
\theoremstyle{remark}
\newtheorem{remark}[theorem]{Remark}

\numberwithin{theorem}{section}
\numberwithin{equation}{section}

\DeclareMathOperator*{\esssup}{\rm ess\,sup}

\def\mff{{\mathfrak f}}
\def\mx{{\bf x}}
\def\my{{\bf y}}

\def\Div{{\rm div}}

\def\pa{\partial}

\let\mib=\boldsymbol

\def\R{\Bbb{R}}

\def\N{\Bbb{N}}
\def\S{\Bbb{S}}

\def\eps{\varepsilon}
\def\mxi{{\mib \xi}}
\def\mx{{\mib x}}
\def\my{{\mib y}}
\def\mz{{\mib z}}
\def\mw{{\mib w}}
\def\mh{{\mib h}}

\def\malpha{{\mib \alpha}}
\def\mbeta{{\mib \beta}}

\def\mkappa{{\mib \kappa}}
\def\mlambda{{\mib \lambda}}
\def\mdelta{{\mib \delta}}
\def\momega{{\mib \omega}}

\def\meta{{\mib \eta}}

\def\Rd{{{\bf R}^{d}}}

\newcommand{\cO}{\mathcal{O}}

\newcommand{\cDp}{\mathcal{D}^\prime}

\newcommand{\cS}{\mathcal{S}}
\newcommand{\cA}{\mathcal{A}}

\newcommand{\cF}{\mathcal{F}}

\newcommand{\norm}[1]{\left \lVert#1 \right\rVert}
\newcommand{\cM}{\mathcal{M}}
\newcommand{\cL}{\mathcal{L}}

\newcommand{\abs}[1]{\left\lvert#1\right\rvert }
\newcommand{\bS}{\Bbb{S}}

\newcommand{\innb}[1]{\bigl\langle#1\bigr\rangle}

\newcommand{\loc}{\operatorname{loc}}
\newcommand{\supp}{\operatorname{supp}\,}

\newcommand{\seq}[1]{\left\{#1\right\}}

\newcommand{\sign}{\operatorname{sign}}
\newcommand{\meas}{\operatorname{meas}}
\newcommand{\En}{\mathbf{1}}

\makeatletter
\@namedef{subjclassname@2020}{%
  \textup{2020} Mathematics Subject Classification}
\makeatother

\begin{document}

\title[Heterogenous velocity averaging]
{Regularity of velocity averages in 
kinetic equations with heterogeneity}

\author[Erceg]{M. Erceg}
\address[Marko Erceg]
{Department of Mathematics, Faculty of Science, 
University of Zagreb, 
Bijeni\v cka cesta 30, 
10000 Zagreb, Croatia}
\email{\mailto{maerceg@math.hr}}

\author[Karlsen]{K. H. Karlsen}
\address[Kenneth H. Karlsen]
{Department of Mathematics, 
University of Oslo, 
NO-0316 Oslo, Norway}
\email{\mailto{kennethk@math.uio.no}}

\author[Mitrovi\'{c}]{D. Mitrovi\'{c}}
\address[Darko Mitrovi\'{c}]
{University of Montenegro, 
Faculty of Mathematics, 
Cetinjski put bb,
81000 Podgorica, 
Montenegro}
\address{University of Vienna, 
Faculty of Mathematics, 
Oscar-Morgenstern-Platz 1, 
1090 Vienna, Austria}
\email{\mailto{darkom@ucg.ac.me}}

\subjclass[2020]{Primary: 35B65, 35L65; Secondary: 35Q83, 42B37}

\keywords{Kinetic equation, heterogeneity, velocity averaging, 
regularity, scalar conservation law, 
discontinuous flux, isentropic gas dynamics}

\begin{abstract}
This study investigates the regularity of 
kinetic equations with spatial heterogeneity.
Recent progress has shown that velocity averages of 
weak solutions $h$ in $L^p$ ($p>1$) are strongly 
$L^1_{\text{loc}}$ compact under 
the natural non-degeneracy condition. 
We establish regularity estimates for equations 
with an $\boldsymbol{x}$-dependent drift vector 
$\mathfrak{f} = \mathfrak{f}(\boldsymbol{x}, \boldsymbol{\lambda})$,
which satisfies a quantitative version of the 
non-degeneracy condition. We prove that $(t,\boldsymbol{x})
\mapsto \int  h(t,\boldsymbol{x},\boldsymbol{\lambda}) 
\rho(\boldsymbol{\lambda}) 
\, d\boldsymbol{\lambda}$, for any sufficiently 
regular $\rho(\cdot)$, belongs to the fractional Sobolev space 
$W_{\text{loc}}^{\beta,r}$, for some regularity $\beta\in (0,1)$ 
and integrability $r \geq 1$ exponents. 
While such estimates have long been known for 
$\boldsymbol{x}$-independent drift vectors 
$\mathfrak{f}=\mathfrak{f}(\boldsymbol{\lambda})$, 
this is the first quantitative regularity estimate 
in a general heterogeneous $L^p$ setting, 
valid for all $p>1$ and allowing measure-valued 
source terms with derivatives of any order in the velocity variable. 
As applications, we obtain (i) new regularity 
estimates for heterogeneous 
conservation laws with nonlinear 
discontinuous flux and bounded 
initial data, and (ii) regularity of the density 
variable in isentropic gas dynamics under a 
regularity assumption on the velocity field.
\end{abstract}

\date{\today}

\maketitle

\tableofcontents

\section{Introduction}\label{sec:intro}
Kinetic equations describe the statistical 
behavior of a large number of particles or agents, 
capturing the evolution of their distribution functions 
over time and space. These equations, including the Boltzmann 
and Vlasov equations, are pivotal in modeling particle 
interactions through collisions or external forces 
\cite{Cercignani:1988aa,Vasseur:2008aa,Villani:2002aa}. 
In the context of hyperbolic conservation laws, kinetic formulations 
\cite{Lions:1994qy,Perthame:2002qy} offer a robust 
framework for understanding the propagation 
of shock waves, elucidating the macroscopic behavior 
of hyperbolic systems from a microscopic perspective.

So-called velocity averaging lemmas are fundamental tools 
in the analysis of kinetic equations, providing insights into the regularity 
and compactness properties of macroscopic observables. 
They assert that, under certain conditions, the averages 
of solutions with respect to the velocity variable exhibit 
improved regularity---quantified in terms of fractional Sobolev or Besov 
spaces---compared to the original solutions. Velocity averaging lemmas 
have a long and rich history, dating back to the 
foundational work of Agoshkov \cite{Agoshkov:84} 
and Golse, Lions, Perthame, and Sentis \cite{Golse-etal:88} 
in the $L^2$ setting. They were later extended 
to general $L^p$ spaces ($p>1$) in \cite{Bezard:94} 
and \cite{DiPerna-etal:91}. This was followed by 
the optimal Besov space regularity proved in \cite{DeVorePetrova:01}. 
The study of averaging lemmas under various structural and regularity 
conditions has been pursued in numerous works 
\cite{Arsenio:2011aa,ArsenioMasmoudi:19,ArsenioSaint-Raymond:11, 
GolseSaint-Raymond:02, Jabin:2022aa, JabinVega:03, JabinVega:04, 
PerthameSouganidis:98, Tadmor:2006vn, Westdickenberg:02} 
(this list is by no means exhaustive), employing a wide range of 
analytical techniques, including wavelet decompositions, 
real-space approaches in time and space-time, Radon 
and $X$-transforms, duality-based dispersion 
estimates, and energy methods.

Most research in this area has traditionally focused on 
the homogeneous case, where the coefficients of the equations 
are independent of the spatial variable $\mx$. However, the 
heterogeneous case, where the drift vector  
$\mff=\mff(\mx,\mlambda)$ depends on both the space 
($\mx$) and velocity ($\mlambda$) variables, is equally 
important due to its relevance in more complex applications, 
where environmental variations significantly impact particle dynamics.

When the coefficients depend on the spatial location $\mx$, 
the primary method for deriving velocity averaging results is through 
the use of microlocal defect measures that account for oscillations. 
These measures were independently introduced by 
Gerard \cite{Gerard:91} and Tartar \cite{Tartar:1990mq}.  
In this context, the velocity averaging results manifest as 
strong compactness (convergence) statements about 
sequences of weak solutions. However, they have not yet 
provided the quantitative compactness (regularity) estimates seen in the 
original velocity averaging lemmas. Nonetheless, these results 
are highly robust with respect to the $\mx$-regularity of the 
coefficients, accommodating even discontinuities.  
For some representative works, see \cite{Panov:1994gf, Panov:2010aa} 
for $L^\infty$ solutions (related to hyperbolic conservation laws) 
and \cite{ Erceg:2023aa,Lazar:2012aa,LazarMitrovic:16} for $L^p$ 
solutions under the essential condition that $p \ge 2$. 
The case where $1<p < 2$, which combines low integrability of the 
solutions with heterogeneities, remained an open problem since 
\cite[page 1764]{Gerard:91}, until it was addressed 
recently in \cite{Erceg:2023ab}.

Despite the progress since \cite{Gerard:91}, the 
problem of obtaining regularity estimates for velocity 
averages has remained open in the heterogeneous 
setting at the level of generality considered here. 
To the best of our knowledge, the only available results 
in this direction are those of \cite{Gerard1990,Gerard:1992aa}. 
However, when reduced to the first-order transport equation, 
their approach differs substantially from ours and does not 
extend to the framework of hyperbolic conservation 
laws---a point we will comment on later in this introduction.

We investigate the regularity 
question for kinetic equations of the form
\begin{equation}\label{eq-1}
	\pa_{t}h+\Div_\mx \bigl(\mff(\mx,\mlambda) h\bigr)
	= \pa_{\mlambda}^\mkappa \gamma, 
	\quad 
	t\in \R, \, \, \mx \in \R^d, 
	\,\, \mlambda \in \R^m,
\end{equation} 
where $h=h(t,\mx,\mlambda)$, $\mkappa \in \N_0^m$, and $m\in \N$.
Since initial data holds no significance in our analysis, 
we have assumed that the variable $t$ takes on all real values, 
rather than the more conventional range of $\R_+:=(0,\infty)$.

By setting $\mx_0 = (x_0, \mx)
=(t,\mx)\in \R^D$ with $D = 1 + d \ge 2$, 
we can write this equation more compactly as:
\begin{equation}\label{eq-1-new}
	\Div_{\mx_0} \bigl( (1,\mff(\mx,\mlambda))h\bigr)
	= \pa_{\mlambda}^\mkappa \gamma, 
	\quad 
	\mx_0\in \R^D, 
	\,\,  \mlambda \in \R^m,
\end{equation} 
where $h=h(\mx_0,\lambda)$. We will
make the following standing assumptions about the 
solution and ``data" of the equation \eqref{eq-1}:
\begin{itemize}
	\item[({\bf H1})] $h \in L^p_{\loc}(\R^{D+m})$, $p>1$;

	\item[({\bf H2})] (i) For almost every 
	$\mx \in \R^{D-1}$, the mapping 
	$\mlambda \mapsto \mff(\mx, \mlambda)$ belongs to 
	$C^{\mkappa}(\R^m; \R^{D-1})$. Moreover, this mapping is 
	uniformly locally bounded with respect to $\mx$. That is, for every 
	$\malpha \in \N_0^m$ with $\malpha \le \mkappa$, 
	the partial derivative $\partial_{\mlambda}^{\malpha} \mff$ exists 
	and remains bounded on compact subsets of $\R^{D-1} \times \R^m$.
	
	\medskip
	
	\noindent (ii) There exist exponents 
	$\bar s \in (0,1)$ and $\bar p > p'$ 
	(where $1 = 1/p + 1/p'$ and $p$ is the 
	exponent appearing in ({\bf H1})) 
	such that, for any open and bounded subsets 
	$K \subset \R^{D-1}$ and $L \subset \R^m$, 
	$$
	\sup_{\mlambda \in L}  
	\norm{\mff(\cdot, \mlambda)}_{W^{\bar s, \bar p}(K)}
	<\infty,
	$$
	where $W^{\bar s, \bar p}(K)$ denotes the 
	fractional Sobolev space characterized 
	by its regularity exponent $\bar s$ and integrability 
	exponent $\bar p$ (see Section \ref{sec:prelim}).

	\item[({\bf H3})] $\gamma=\gamma(\mx_0,\mlambda)$ 
	is a Radon measure, $\gamma \in \cM_{\loc}(\R^D\times \R^m)$.
\end{itemize}

Throughout the paper, we work under the following 
quantitative non-degeneracy condition on 
the drift $\mff=(\mff_1,\ldots,\mff_d)$: 
there exists an exponent $\alpha>0$ such that
\begin{align}\label{non-deg}
	\esssup\limits_{\mx\in K}
	\sup\limits_{(\xi_0,\xi_1,\ldots,\xi_d)\in \bS^d} 
	\meas\seq{\mlambda\in L: 
	\abs{\xi_0+\sum\limits_{k=1}^d
	\xi_k \mff_k(\mx,\mlambda)}<\nu}\leq \nu^{\alpha},
\end{align} 
for any $\nu>0$, $K \subset\subset \R^{D-1}$, and 
$L\subset\subset \R^m$.

Our main result concerns the regularity of the 
$\mlambda$-averages 
\begin{equation*}
	\innb{h,\rho}(\mx_0)
	:=\int_{\R^m} h(\mx_0,\mlambda) \rho(\mlambda)
	\, d\mlambda, 
	\quad 
	\rho\in C^{\abs{\mkappa}}_c(\R^m),
\end{equation*}
where $h$ is a weak solution to \eqref{eq-1-new}.
More precisely, we will prove that 
\begin{equation}\label{aver-quant}
	\innb{h,\rho}
	\in W^{\beta,r}_{\loc}(\R^D), 
\end{equation} 
for some $\beta\in(0,1)$ and $r\in\bigl(1,\min\{p,2\}\bigr)$. 
The two exponents $\beta$ and $r$ are determined explicitly 
when $p\ge 2$, while for $p<2$ we provide 
an explicit lower bound for $\beta$. 
This complexity arises because their determination 
requires solving a system of inequalities involving 
several free parameters intrinsic to our method of proof. 
In addition, the resulting exponents 
depend on the parameters appearing in 
assumptions ({\bf H1})--({\bf H3}) and in 
\eqref{non-deg} (see Subsection \ref{subsec:final} and 
Remark \ref{rem:reg-exp} for details). 

While we make no claim of optimality---indeed, our 
exponents are considerably lower than those predicted by the 
homogeneous theory---our analysis establishes 
a significant new regularity result for velocity averages, 
substantially extending the scope of the available theory. 
In particular, it resolves an open problem that 
goes well beyond the original question 
posed in \cite{Gerard:91} concerning the 
$L^p$ compactness ($p \neq 2$) of velocity averages. 
Possible refinements of the exponents 
are left for future work.

We continue the historical overview with 
a more detailed comparison between 
our result and the few existing contributions that 
incorporate some heterogeneity. The theory developed 
in \cite{Gerard1990,Gerard:1992aa} considers 
pseudodifferential operators and formulates the regularity 
microlocally, that is, by analyzing specific 
directions $\mxi_0$ at each point $\mx_0$. However, when 
restricted to regularity theory for the 
specific equation \eqref{eq-1} in the present 
work, our approach yields several substantial 
improvements. To our knowledge, each of these advances 
poses significant obstacles to being achieved with 
the techniques of \cite{Gerard1990,Gerard:1992aa}. 
First, our analysis is carried out in the full 
$L^p$ framework for arbitrary $p>1$, in sharp contrast 
to the $L^2$ setting adopted 
in \cite{Gerard1990,Gerard:1992aa}.  
Second, our theory allows for 
irregular source terms, including Radon measures 
and $\mlambda$-derivatives of Radon measures. 
This constitutes a key generalization beyond the setting of 
\cite{Gerard1990,Gerard:1992aa}, where the source term 
is confined to a negative Sobolev space $H^{-\sigma}$ 
for some $\sigma < 1$. Since Radon measures 
are not contained in $H^{-\sigma}$ for $\sigma < 1$ in 
several dimensions, our approach enables the 
analysis of physically relevant models---such as 
scalar conservation laws with heterogenity, isentropic 
gas dynamics, and heterogeneous Boltzmann-type equations 
on manifolds—that remain inaccessible within 
the earlier theory. Third, we allow for 
spatial discontinuities in the drift vector, 
whereas the analysis in \cite{Gerard1990,Gerard:1992aa} 
requires smooth coefficients, as is typically 
the case in pseudodifferential settings. 
Finally, with regard to the non-degeneracy condition, 
our assumption \eqref{non-deg} coincides with condition 
(H$_\delta$) from \cite{Gerard1990}. 
By contrast, the analysis in \cite{Gerard:1992aa} employs a stronger 
transversality condition which, although yielding a sharper result 
than \cite{Gerard1990}, is considerably more restrictive, 
as it requires the $\mlambda$-derivative of the symbol 
to be nonvanishing at every point of the characteristic manifold.

\begin{remark}\label{rem:H2}
A straightforward case in which assumption ({\bf H2}) 
is satisfied arises when 
$\mff \in C^{1,\abs{\mkappa}}(\R^{D-1} \times \R^m; \R^{D-1})$, 
that is, when $\mff$ is a $(D-1)$-dimensional vector field 
that is $C^1$ in $\mx$ and $C^{\abs{\mkappa}}$ in the 
velocity variable $\mlambda$. 
In this smooth setting, it is easy to verify that one may take 
$\bar{s}=1$ and $\bar{p}=\infty$, which simplifies the analysis 
in the final part of the proof of \eqref{aver-quant} 
(see Subsection \ref{subsec:final}). 

More importantly, however, assumption ({\bf H2}) 
also accommodates drift vectors $\mff=\mff(\mx,\mlambda)$ 
that are discontinuous with respect to $\mx$. 
In particular, drift vectors with $BV$ (bounded variation)
regularity in $\mx$ are admissible (see 
Lemma \ref{lem:BV_H2}).
\end{remark}

\begin{remark}
We require that condition \eqref{non-deg} holds 
for every compact subset $L \subset\subset \R^m$. 
This choice ensures that, in~\eqref{aver-quant}, 
the test function $\rho$ may be taken from 
$C^{\abs{\mkappa}}_c(\R^m)$. 
Conversely, if one wishes to consider only 
test functions $\rho \in C^{\abs{\mkappa}}_c(L)$ 
for a fixed $L \subset\R^m$, 
it is sufficient to assume that~\eqref{non-deg} 
holds for that particular set~$L$. 
An analogous remark applies to assumption~({\bf H2}). 
This observation becomes relevant when dealing with 
bounded entropy solutions of heterogeneous 
conservation laws (see Section \ref{sec:appl}).
\end{remark}

Let us briefly outline some of the key ideas behind 
the proof of \eqref{aver-quant}. When the drift term $\mff = \mff(\mlambda)$ 
is independent of $\mx$, the Fourier transform is classically 
applied to the equation \eqref{eq-1} to separate the solution $h$ from 
the coefficients (see, e.g., \cite[Section 5.3]{Perthame:2002qy}). 
Specifically, we obtain
\begin{equation}\label{eq:hom-drift}
	2\pi i \, \widehat{h}(\mxi_0, \mlambda) 
	= \frac{\pa_{\mlambda}^\mkappa \widehat{\gamma}
	(\mxi_0, \mlambda)}{\abs{\mxi_0}\left(\xi'_0 + \mff \cdot \mxi'\right)}, 
	\qquad \mxi_0' \in \bS^{D-1},
\end{equation}
where $\widehat{u} = \cF(u)$ denotes the Fourier transform 
of a function $u: \R^D \to \R$, and $\cF^{-1}$ denotes the inverse 
transform. Occasionally, we use subscripts 
like $\cF_{\mx_0}$ and $\cF_{\xi_0}^{-1}$ 
to indicate the specific independent variable of the function 
being transformed. Moreover, for a given 
$\mxi_0=(\xi_0,\mxi)\in \R^D$, we 
denote by
\begin{equation}\label{eq:mxi-prime}
	\mxi_0'= \frac{\mxi_0}{\abs{\mxi_0}}, 
	\qquad \mxi_0'=(\xi_0',\mxi'),
\end{equation} 
the corresponding projection on the unit 
sphere $\bS^{D-1}\subset \R^D$.

Given \eqref{eq:hom-drift}, one can separate the regions where 
the symbol $\cL = \xi'_0 + \mff \cdot \mxi'$ is near zero 
from those where it is not. One then analyses the behavior of 
the solution $h$ in these distinct regions separately. 
In the region where the symbol is away from zero, 
the term $\frac{1}{\abs{\mxi_0}}$ 
on the right-hand side of \eqref{eq:hom-drift} 
ensures regularity.  Conversely, in the region where the 
symbol approaches zero, the non-degeneracy 
condition \eqref{non-deg} is 
used to obtain the required estimates 
(see, e.g., \cite{Lions:1994qy, Tadmor:2006vn}).

In contrast, when dealing with a heterogeneous 
drift $\mff=\mff(\mx, \mlambda)$, the direct application 
of the Fourier transform to \eqref{eq-1-new} yields:
\begin{equation*}
	2\pi i \cF_{\mx_0}\Bigl(\bigl(1, \mff(\cdot_\mx, \mlambda)\bigr) 
	h(\cdot_{\mx_0}, \mlambda)\Bigr)(\mxi_0) \cdot \mxi_0 
	= \pa_\mlambda^\mkappa \cF_{\mx_0}
	\Bigl(\gamma\bigl(\cdot_{\mx_0}, \mlambda\bigr)\Bigr).
\end{equation*}
A natural idea is to employ a commutation lemma to shift 
the $\mx$-dependent vector $1/\mff$ from inside to 
outside the Fourier transformation and 
to get an expressions of the form:
\begin{align*}
	&\Biggl\|\cF^{-1}_{\mxi_0} \Biggl[
	\int_{\R^m} \Biggl(\frac{1}
	{\bigl(1, \mff(\cdot_\mx, \mlambda)\bigr) \cdot \mxi_0'}
	\cF_{\mx_0}\Bigl[\rho(\mlambda)
	\bigl(1, \mff(\cdot_\mx, \mlambda)\bigr) 
	\cdot \mxi_0' \, h(\cdot_{\mx_0},\mlambda)\Bigr]
	\\ & \quad\qquad \quad\quad
	- \cF_{\mx_0}\Bigl[\rho(\mlambda)
	\, h(\cdot_{\mx_0},\mlambda)\Bigr] 
	\Biggr)\, d\mlambda \Biggr]
	\Biggr\|_{W^{s,q}(\R^D)}
	\\ & \;
	=\Biggl\|\cF^{-1}_{\mxi_0} \Biggl[
	\int_{\R^m} \Biggl(\frac{1}
	{\bigl(1, \mff(\cdot_\mx, \mlambda)\bigr) \cdot \mxi_0'}
	\cF_{\mx_0}\Bigl[\rho(\mlambda)
	\bigl(1, \mff(\cdot_\mx, \mlambda)\bigr)
	\cdot \mxi_0' \, h(\cdot_{\mx_0},\mlambda)\Bigr]
	\\ & \quad\qquad 
	-\cF_{\mx_0}\left[\frac{1}
	{\bigl(1, \mff(\cdot_\mx, \mlambda)\bigr) \cdot \mxi_0'} 
	\rho(\mlambda)
	\bigl(1, \mff(\cdot_\mx, \mlambda)\bigr) \cdot \mxi_0'
	\, h(\cdot_{\mx_0},\mlambda)\right] \Biggr)
	\, d\mlambda \Biggr] \Biggr\|_{W^{s,q}(\R^D)} \lesssim 1.
\end{align*}
This approach, which seeks to decouple 
the coefficients of the equation from the 
Fourier transform of the solution, parallels the techniques 
employed in the homogeneous setting. 
A related idea appears in \cite{Gerard1990}, 
where Proposition 3.3---concerned with the 
commutation between the symbol 
and the Fourier-Bros-Iagolnitzer (FBI) transform---constitutes 
a central step in the proof.  To the best of our 
knowledge, however, extending this methodology 
to the level of generality addressed in the present work 
poses insurmountable technical difficulties.

We mention here that the notation $A \lesssim B$ signifies 
that $A \leq C B$ for some constant $C > 0$, which remains 
independent of the primary parameters. For greater clarity, when 
necessary, we will use $\lesssim_g$ to indicate that 
the implicit constant depends on $g$. 
We write $A \sim B$ to denote that 
$A \lesssim B$ and $B \lesssim A$.

A key idea of this paper is to adjust 
the aforementioned strategy by 
localising the kinetic equation 
around a fixed point $\my\in \R^{D-1}$. 
Subsequently, we integrate 
the derived estimates over $\my$. 
This method---which may be regarded as a ``non-asymptotic" 
blow-up approach---overcomes the previously mentioned 
difficulty and avoids the necessity of intricate 
estimates involving pseudo-differential operators.
Let us elaborate on the method. First, simple 
``Littlewood-Paley" considerations (see 
Section \ref{sec:prelim}) reveals that, to establish 
\eqref{aver-quant}, it is sufficient to show the 
existence of exponents $r> 1$ and
$\beta_0 > 0$ ($\beta_0>\beta$) 
such that
\begin{equation}\label{eq:intro-goal}
	\norm{\cA_{\phi_j} 
	\left(\innb{h, \rho}\right)}_{L^r(\R^D)} 
	\sim 2^{-j \beta_0}, \quad j \in \N_0, 
\end{equation}
where $\cA_{\phi_j}(\cdot)$ 
is the Fourier multiplier operator with 
the symbol (multiplier) 
$\phi_j(\abs{\mxi_0})$,
and $\phi_j(\cdot)$ is a smooth $\R$-valued 
function supported in an annulus of size $2^j$.

Throughout this paper, we will assume, without loss 
of generality, that the solution $h$ and 
the source $\gamma$ are compactly supported 
on $K_0\times L$, where
\begin{equation}\label{eq:K0-def}
	K_0 = [0, T] \times K \subset \R^D, 
	\quad 
	T > 0, 
	\quad 
	K \subset\subset \R^{D-1}, 
	\quad L \subset\subset \R^m.
\end{equation}
For practical reasons, we will often implicitly extend
compactly supported functions by zero to the whole space.
We will deduce \eqref{eq:intro-goal} via duality by proving that, for 
any $\varphi = \varphi(\mx) \in C^1_c(K)$ and 
$v = v(\mx_0) \in C^\infty_c(\R^D)$,
$$
\abs{I_j(v; \varphi)} \lesssim 2^{-\beta_0 j} 
 \norm{v}_{L^{r'}(\R^D)}, 
\quad \text{with } \frac{1}{r} + \frac{1}{r'} = 1,
$$
where
$$
I_j(v; \varphi) := \int_{\R^D} 
\cA_{\phi_j}
\left(\int_{\R^m} \rho(\mlambda) \varphi(\cdot_{\mx}) 
h(\cdot_{\mx_0}, \mlambda) 
\, d\mlambda \right)\!(\mx_0) v(\mx_0) \, d\mx_0.
$$
Unfortunately, this is difficult to achieve directly from the 
heterogenous equation \eqref{eq-1-new} for reasons outlined above. 
As a key technical innovation (in what we 
might call a non-asymptotic blow-up 
style of argument), we instead consider the term
\begin{align*}
	I_{j,\eps}(v; \varphi) 
	& := \int_{K} \int_{\R^D} \cA_{\phi_j} 
	\left(\int_{\R^m} \rho(\mlambda)
	\chi^{j, \eps}_{\my}(\cdot_{\mx_0})
	h(\cdot_{\mx_0}, \mlambda) \, d\mlambda \right)\!(\mx_0)
	\\ & \qquad \qquad \qquad \qquad \qquad \qquad 
	\qquad \quad \times 
	v(\mx_0) \varphi(\my) \, d\mx_0 d\my,
\end{align*}
where $\chi^{j, \eps}_\my(\mx_0)=\chi^{j, \eps}_\my(x_0,\mx)$ 
behaves like a Gaussian mollifier in $\mx\in \R^{D-1}$ with 
scaling parameter $2^{-j \eps}$, $\eps > 0$. The error between the two 
quantities is bounded by 
$$
\abs{I_j(v; \varphi) - I_{j,\eps}(v; \varphi)} 
\lesssim 2^{-j\eps} \norm{\varphi}_{C^1(K)} 
\norm{h}_{L^r(K_0 \times L)} \norm{v}_{L^{r'}(\R^D)},
$$
where the crucial asymptotic decay in $j$ becomes evident.

Therefore, the main part of the proof involves estimating the 
term $I_{j,\eps}(v; \varphi)$. This estimation can now be derived using 
the kinetic equation \eqref{eq-1-new}, where 
the non-asymptotic blow-up method allows 
us to view the equation (in $\mx$) as having 
a homogeneous drift 
$\mff_{\my}(\mlambda) = \mff(\my, \mlambda)$, 
where $\my$ can be considered as ``frozen" 
in crucial parts of the proof. 
While this approach addresses the previously 
mentioned issue, it introduces several 
delicate terms that require careful handling. Moreover, we must 
account for the potentially limited integrability of 
the solution, as assumed in ({\bf H1}). 
The interpolation argument, a standard tool in regularity theory (cf.~\cite{Erceg:2025aa,Gess:2018ab,Tadmor:2006vn}), 
is not required in our approach.

The final estimate for $I_{j,\eps}(v; \varphi)$ involves 
five adjustable parameters: the previously introduced $r$ and $\eps$, 
along with three additional ones: $\epsilon$, $\zeta$, and $\sigma$. 
The parameter $\epsilon$ controls a specific regularisation of key 
symbols in Fourier multiplier operators, 
$\zeta$ determines the magnitude 
of the symbol of the kinetic equation, and $\sigma$ governs the 
minimal integrability of $h$, where $h \in L^p$ for some $p$ 
potentially close to 1. 
These five parameters must be delicately linked and internally 
balanced to conclude that \eqref{aver-quant} 
holds for appropriate values of $\beta$ and $r$.

Once the key ideas are in place, deriving the 
estimates requires only a few tools, such as 
the Marcinkiewicz multiplier theorem and a tailored 
commutation estimate for multiplier operators.
Additionally, we rely on some results demonstrating 
that multiplier operators with a symbol of the form
$$
\frac{\delta \abs{\mxi'}^2}{\abs{\xi_0+F(\mlambda)\cdot \mxi'}^2 
+ \delta \abs{\mxi'}^2}, \quad \delta > 0,
$$
are bounded on $L^q$ spaces for $q > 1$, independently 
of $\mlambda$, $\delta$, and $F$, provided $F$ is locally integrable. 
For further details, we refer to Section \ref{sec:main}.

\medskip

It turns out that the proof techniques discussed above are 
sufficiently robust, with suitable modifications, to address the 
question of regularity of velocity averages for a class of 
stochastic kinetic equations. These equations are structurally similar 
to the deterministic ones but include 
an It{\^o} stochastic forcing term $\Phi\, dW$.  
Stochastic perturbations (and the inclusion of 
degenerate second-order operators) will be 
discussed elsewhere.
 
\section{Preliminaries and multiplier estimates}\label{sec:prelim}
In this section, we establish the necessary notations, 
outline some preliminary functional analytic 
considerations, and prove 
an essential commutator estimate for 
Fourier multiplier operators.

Recall that the fractional Sobolev 
(or Sobolev-Slobodetskii) space $W^{s,q}(\Omega)$, 
with $s\in(0,1)$, $q\in [1,\infty)$, and $\Omega\subseteq\R^D$
open, is defined as the space of $u \in L^q(\Omega)$ such that 
(cf.~\cite[Definition 2.5.16]{Hytonen:2016aa})
$$
\norm{u}_{W^{s,q}(\Omega)}^q
= \norm{u}_{L^p(\Omega)}^q 
+\int_{\Omega} \int_{\Omega} 
\frac{\abs{u(\mx_0) - u(\my_0)}^q}
{\abs{\mx_0 - \my_0}^{D + sq}} 
\, d\mx_0 \, d\my_0 <\infty.
$$
When $\Omega$ is bounded and sufficiently regular 
(e.g., open and convex, ensuring the 
existence of an extension operator), 
we have the continuous embedding 
$W^{s,q}(\Omega) \hookrightarrow W^{s_1,q_1}(\Omega)$ 
for any $s_1 \in (0,s)$ and $q_1 \in (1,q)$ 
(see, e.g., \cite[Theorem~3.3]{Visintin:2020aa} 
and Remark~\ref{rem:Sob_Omega_bdd} below). 
This embedding will be used in Subsection~\ref{subsec:R3} 
when treating part~(ii) of assumption~({\bf H2}). 
Note, however, that the inclusion fails for $s_1 = s$ 
\cite{Mironescu:2015aa}: for fixed $q_1 < q$, 
the constant in the norm inequality blows up as $s_1 \to s$.

For regular domains $\Omega \subseteq \R^D$, 
the fractional Sobolev space $W^{s,q}(\Omega)$ 
can be characterized as the restriction to $\Omega$ 
of functions in $W^{s,q}(\R^D)$, 
which necessitates the use of an extension operator. 
Apart from assumption~({\bf H2}), however, 
we will primarily be concerned with the case 
where the domain is the whole space, $\Omega = \R^D$. 
Accordingly, we shall restrict our attention 
to this setting for the remainder of the discussion.

As an intermediate step in our analysis, we will require 
more general Besov spaces $B^s_{q,r}(\R^D)$. 
However, we will introduce these spaces only 
after establishing some necessary notations.
The Fourier transform of a function $u:\R^D\to \R$, 
$u=u(\mx_0)$, is denoted by 
$\cF(u)(\mxi_0) = \widehat{u}(\mxi_0) 
= \int_{\R^D} e^{-2\pi i \mx_0 \cdot \mxi_0} 
u(\mx_0) \, d\mx_0$, and $\cF^{-1}$ represents the inverse 
Fourier transform. In this paper, we utilise only basic 
properties of the Fourier transform, as gathered, 
for example, in \cite[Theorem 2.2.14]{Grafakos:2014aa}.

Let $\cA_{\psi}$ denote the Fourier multiplier operator 
with symbol $\psi=\psi(\mxi_0)$, i.e.,
\begin{equation}\label{eq:A-psi-def}
	\cF\left(\cA_{\psi}(u)\right)\!(\mxi_0) 
	= \psi(\mxi_0)\cF(u)(\mxi_0).
\end{equation}
For important properties of these operators, we refer to
\cite[Section 2.5]{Grafakos:2014aa}.

Let us fix an inhomogeneous Littlewood-Paley sequence
$\seq{\phi_j}_{j \in \N_0}$, which forms a dyadic partition 
of unity on $\R$, and, via radial extension, on $\R^D$, 
with the usual properties. Namely, each $\phi_j=\phi_j(\xi)$
is compactly supported, smooth, and, for $j\geq 1$, localised in 
dyadic annuli of the form $2^{j-1} \leq \abs{\xi} \leq 2^{j+1}$.
More precisely, we will use functions 
that have the following properties:
\begin{equation}\label{eq:phi-j-def}
	\begin{split}
		& 0\leq \phi \in C_c^\infty(\R),  \quad 
		\supp(\phi) \subseteq \left(2^{-1},2\right), \quad 
		\sqrt{\phi} \in C_c^\infty(\R),
		\\ & 
		0\leq  \phi_0\in C_c^\infty(\R), \quad 
		\supp(\phi_0)\subseteq (-2,2),
		\\ &
		\phi_j(\xi):=\phi\left(2^{-j}\xi\right), 
		\quad j\in \N, \quad
		\sum_{j\ge 0}\phi_j(\xi)=1, 
		\quad \xi\in [0,\infty).
	\end{split}
\end{equation} 
The existence of $\phi$ and $\phi_0$ can 
be established by following the approach outlined 
in \cite[Exercise 6.1.1]{Grafakos:2014aa} (see also 
\cite[Section 14.2.c]{Hytonen:2023aa}). It suffices to consider 
the restriction $\xi \in [0,\infty)$, as we will later 
work with radial symbols of the form 
$\phi_j(\abs{\mxi_0})$.

The functions $\phi_j$ often act as symbols for Fourier 
multiplier operators. Therefore, it is useful 
to revisit the essential continuity properties 
of these operators. Since $\mxi_0\mapsto\phi(\abs{\mxi_0})$ 
is smooth and compactly supported,
$\cA_{\phi(\abs{\cdot})}$ is continuous from $L^q(\R^D)$ 
to itself for any $q\in [1,\infty]$ 
(see \cite[Lemma 14.2.11]{Hytonen:2023aa} 
and \cite[Section 2.5.4]{Grafakos:2014aa}).
Moreover, the same holds for 
$\cA_{\phi_j(\abs{\cdot})}$, $j\in\N$, while by 
\cite[Proposition 2.5.14]{Grafakos:2014aa} the 
corresponding norms are independent of $j$. 
On the other hand, at first glance, it may appear that 
the function $\mxi_0\mapsto\phi_0(\abs{\mxi_0})$ has 
a singularity at the origin. 
However, by \eqref{eq:phi-j-def}, $\phi_0\equiv 1$ on $[0,1]$. 
Hence, it is smooth and compactly supported. 
Thus, the corresponding multiplier operator 
exhibits the same continuity properties as 
those mentioned for the other functions $\phi_j$.
In order to simplify the notation, we will use 
$\cA_{\phi_j}$ to denote the multiplier operator 
with the symbol $\mxi_0\mapsto \phi_j(\abs{\mxi_0})$. 

For $s\in \R$ and $q,r\in [1,\infty)$, the Besov space 
$B^s_{q,r}(\R^D)$ is the collection 
of all tempered distributions $u$ on $\R^D$ such that 
for any $j\geq 0$ we have $\cA_{\phi_j}(u)\in L^q(\Rd)$
and the quantity 
\begin{equation*}
	\norm{u}_{B^s_{q,r}(\R^D)} 
	:= \biggl(\, \sum_{j\geq 0} 2^{jsr}
	\norm{\cA_{\phi_j}(u)}_{L^q(\R^D)}^r\biggr)^{1/r}
\end{equation*}
is finite (see, e.g., \cite[Section 14.4]{Hytonen:2023aa}).

For any function space defined on 
an open set $\Omega\subset\R^D$, its local counterpart 
consists of all tempered distributions $u$ 
such that $\varphi u$ lies in the original 
space for every $\varphi\in C^\infty_c(\Omega)$.

If we can establish the estimate \eqref{eq:intro-goal} 
for $u$ (with $u$ in place of $\innb{h,\rho}$)
it follows that $u$ belongs to the space 
$B^\beta_{r,1}(\R^D)$ for every $\beta < \beta_0$. 
Using the embeddings
\begin{equation}\label{eq:besov_emb}
	B^\beta_{r,1}(\R^D) \hookrightarrow 
	B^\beta_{r,r}(\R^D) 
	= W^{\beta,r}(\R^D),
\end{equation}
for which we refer to 
\cite[Corollary 14.4.25 and (14.22)]{Hytonen:2023aa}, 
we reach the desired conclusion that $u\in W^{\beta,r}(\R^D)$ 
for any $\beta\in (0,\beta_0)$. 
Moreover, assuming $u \in L^r(\R^D)$, the $L^r$-norm of 
each $\cA_{\phi_j}(u)$ is evidently finite, as 
discussed earlier concerning $\cA_{\phi_j}$. 
Consequently, our attention can be directed toward 
establishing \eqref{eq:intro-goal} for sufficiently large $j$. 
Specifically, for a fixed $j_0 \in \N$, we restrict 
our consideration to $j \geq j_0$.

Summarizing, if for some $r\in [1,\infty)$ 
we have $u \in L^r(\R^D)$ 
and for some $\beta_0>0$ and $j_0\in\N$ the estimate
\begin{equation}\label{eq:besov_sp_u}
	\norm{\cA_{\phi_j}(u)}_{L^r(\R^D)}
	\sim 2^{-j\beta_0}, \quad j\geq j_0,
\end{equation}
holds, then $u\in W^{\beta,r}(\R^D)$ 
for any $\beta\in (0,\beta_0)$. 
In view of this, to prove the main theorem 
below (Theorem \ref{thm-1}), it is sufficient to 
verify \eqref{eq:intro-goal} for large values of $j$.
Moreover, if $u$ is compactly supported 
(which will be the case in our considerations 
since the functions will be localized; see \eqref{eq:K0-def}), 
then it is sufficient to establish the estimate above 
on a compact set containing the support of $u$. 
Indeed, this can be justified in the spirit of 
the Kato-Ponce inequality \cite[(1.1)]{KatoPonce1988}, 
although we provide a more direct 
argument in Remark \ref{rem:est_compact} below.

\begin{remark}\label{rem:Sob_Omega_bdd}
Let us briefly discuss the embedding
$W^{s,q}(\Omega)\hookrightarrow
W^{s_1,q_1}(\Omega)$ for a regular bounded domain
$\Omega\subseteq\R^D$, where $0<s_1<s$ and $1<q_1<q$.
Since an extension operator exists for such domains, 
all interpolation and embedding results 
for Sobolev and Besov spaces on $\Omega$ 
are inherited from the corresponding results 
on the whole space $\R^D$. Hence,
\begin{align*}
	W^{s,q}(\Omega)=B^s_{q,q}(\Omega) 
	& =\left(L^q(\Omega),W^{1,q}(\Omega)\right)_{s,q} 
	\\ & 
	\hookrightarrow 
	\left(L^{q_1}(\Omega),W^{1,q_1}(\Omega)\right)_{s,q}
	=B^s_{q_1,q}(\Omega)
	\\ & \qquad\qquad\qquad\qquad\qquad\quad 
	\hookrightarrow_{s_1,q_1} B^{s_1}_{q_1,q_1}(\Omega)
	=W^{s_1,q_1}(\Omega),
\end{align*}
where $(\cdot,\cdot)_{s,q}$ denotes 
real interpolation (cf.~\cite[C.3]{Hytonen:2016aa}), 
and for each of the intermediate results 
we refer to \cite[Subsection~14.4]{Hytonen:2023aa}.
We emphasize that the dependence on $s_1$ and $q_1$ 
arises in the second embedding, 
where the associated norm bound 
involves a constant of the form 
$\bigl(\sum_{k\geq 0} 2^{-k(s-s_1)\tilde q}
\bigr)^{1/\tilde q}$, with 
$1/q_1 = 1/q + 1/\tilde q$.
\end{remark}

\begin{remark}\label{rem:est_compact}
Let us briefly demonstrate that it is 
sufficient to consider \eqref{eq:besov_sp_u} 
on a compact set, provided that 
$u$ is compactly supported. 
Let $\supp u\subseteq K_0$ and let us define
$\widetilde K_0 = \bigl\{ \mx_0\in\R^D : 
\operatorname{dist}(\mx_0,K_0)<1\bigr\}$.
If $u\in L^r(\R^D)$, then for any $M>0$ we have
$$
\norm{\cA_{\phi_j}(u)}_{L^r(\R^D\setminus\widetilde{K}_0)}
\lesssim_M 2^{-j M}, \quad j\geq 1.
$$
Indeed, since $\cF^{-1}(\phi_j(\abs{\cdot}))$ 
is a Schwartz function, and 
$\phi_j(\abs{\mxi_0})=\phi(2^{-j}\abs{\mxi_0})$, 
for any $M>0$ and any $\abs{\mxi_0}\geq 1$ we have
$$
\abs{\cF^{-1}(\phi_j(\abs{\cdot}))(\mx_0)}
\lesssim_M 2^{-j(M-D)}\abs{\mx_0}^{-M}.
$$
Then for any $M>D$ we get
\begin{align*}
	\norm{\cA_{\phi_j}(u)}_{L^r(\R^D\setminus\widetilde{K}_0)}
	& \lesssim_M 2^{-j(M-D)}
	\norm{\abs{\cdot}^{-M} \En_{B(0,1)^c}
	\star \abs{u}}_{L^r(\R^D)}
	\\ &\lesssim_M 2^{-j(M-D)},
\end{align*}
where we have used that 
$\abs{\mx_0-\my_0}\geq 1$ for $\mx_0\in K_0$ 
and $\my_0\in\R^D\setminus\widetilde{K}_0$,
and that $\abs{\cdot}^{-M} \En_{B(0,1)^c}$ 
is integrable for $M>D$. Therefore, once we 
establish \eqref{eq:besov_sp_u} 
on $\widetilde{K}_0$, the estimate for 
the entire $\R^D$ follows immediately. 
\end{remark}

For $\my \in \R^{D-1}$, $\eps > 0$, and $j \in \N$, 
introduce the functions
\begin{equation}\label{eq:chi-jeps-y-def}
	\begin{split}
		& \tilde \chi^{j,\eps}_{\my}(\mx_0)
		=Ce^{-x_0^2/2}\, 2^{\eps j (D-1)/2} 
		\prod\limits_{k=1}^{D-1}
		e^{-2^{2\eps j}\left(x_k-y_k\right)^2/2},
		\\ 
		& \chi^{j,\eps}_{\my}(\mx_0) 
		=\left(\tilde \chi^{j,\eps}_{\my}(\mx_0)\right)^2,
		\quad \mx_0=(x_0,\mx)\in \R^D,
	\end{split}
\end{equation}
where the unimportant constant
$C$ is chosen such that
\begin{equation}\label{eq:unit-mass}
	\int_{\R^D} \chi^{j,\eps}_{\my}(\mx_0)\,d\mx_0=1,
	\qquad 
	\int_{\R^{D-1}}\chi^{j,\eps}_{\my}(\mx_0)\,d\my \leq1.
\end{equation} 
Given the identity
\begin{equation}\label{eq:gaussian-q-moment}
	\int_{\R} \abs{z}^q e^{-a\abs{z}^2}\,dz 
	=a^{-\frac{1+q}{2}} \Gamma 
	\left(\frac{q+1}{2}\right), \quad a>0,\,\, q\ge 0, 
\end{equation}
and noting that $\Gamma(1/2)=\pi^{1/2}$, 
we can specify $C^2=\pi^{-D/2}$. 
The function $\chi^{j,\eps}_{\my}(\mx_0)
=\chi^{j,\eps}_{\my}(x_0,\mx)$ can be 
regarded as a (Gaussian) mollifier in $\mx$ with 
a scaling parameter of $2^{-\eps j}$.

For later use, note that
\begin{align}
	\label{eq:chi-jeps-y-prop1}
	\int_{\R^{D-1}} 
	\abs{\nabla_{\!\mx}^k\chi^{j,\eps}_{\my}(\mx_0)}^\ell
	\, d\my
	\lesssim 2^{\eps j
	\left(\left(D-1\right)\left(\ell-1\right)
	+k\ell\right)},
	\quad k=0,1,
\end{align}
for any finite $\ell \ge 1$. 
This estimate can be derived using 
\eqref{eq:gaussian-q-moment}.

Recall that for constants $a>0$ and $y\in \R$,
$$
\cF\left(e^{-a(x-y)^2/2}\right)
=\sqrt{2\pi}a^{-1/2}e^{-2\pi i y \xi} 
e^{-2\pi^2a^{-1}\xi^2}, 
\quad x,\xi\in \R.
$$ 
Using this we can compute the Fourier transform of 
$\tilde \chi^{j,\eps}_{\my}(\cdot)$:
\begin{equation}\label{exact}
	\begin{split}
		\cF\left(\tilde \chi^{j,\eps}_{\my}\right)\!(\mxi_0)
		& =C(2\pi)^{D/2}\,
		2^{-\eps j (D-1)/2}\,
		e^{-2\pi^2\xi_0^2}
		\\ & \qquad\quad \times 
		\prod\limits_{k=1}^{D-1}
		\exp(-2\pi i y_k \xi_k) 
		\, \exp\left(-2\pi^2 \, 2^{-2\eps j}\xi_k^2\right).
	\end{split}
\end{equation}

The following commutation estimate will 
play an important role in proving
our main result (see Subsection \ref{subsec:R1}).

\begin{lemma}\label{lem:commutation}
Let $u \in L^2(\R^D)$, $\psi \in C^1(\S^{D-1})$, 
$j\in\N$, and $\beta\in \R$.
Then for any $\my\in\R^{D-1}$ we have
\begin{equation}\label{commutation}
	\begin{split}
		& \norm{\abs{\cdot}^\beta \phi_j(\abs{\cdot})
		\Bigl( \psi\, \cF\left(\tilde \chi^{j,\eps}_{\my} u\right)\!(\cdot)
		-\cF\left(\tilde\chi^{j,\eps}_{\my}\,
		\cA_{\psi}(u)\right)\Bigr)(\cdot)}_{L^2(\R^D)} 
		\\ & \qquad 
		\lesssim  \norm{\nabla \psi}_{L^\infty(\S^{D-1})} 
		2^{-j\left(1-\eps(D+1)/2-\beta\right)}
		\norm{u}_{L^2(\R^D)},
	\end{split}
\end{equation}
where $\psi$ is short-hand for 
$\psi\left(\mxi_0/\abs{\mxi_0}\right)$ with 
$\mxi_0\in \R^D$, $\phi_j$ is defined in \eqref{eq:phi-j-def}, 
and $\tilde \chi^{j,\eps}_{\my}$ is defined in \eqref{eq:chi-jeps-y-def}.
\end{lemma}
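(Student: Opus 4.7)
The plan is to rewrite the expression inside the $L^2$-norm as an integral in the frequency variable and then exploit the $C^1$ regularity of $\psi$ on the sphere to extract the decay in $j$. By the convolution theorem, $\cF(\tilde\chi^{j,\eps}_\my u)=\cF(\tilde\chi^{j,\eps}_\my)\ast \hat u$, while by the definition \eqref{eq:A-psi-def} of the multiplier operator we have $\cF(\tilde\chi^{j,\eps}_\my\cA_{\psi}(u))(\mxi_0)=\int_{\R^D}\cF(\tilde\chi^{j,\eps}_\my)(\mxi_0-\meta)\,\psi(\meta/\abs{\meta})\hat u(\meta)\,d\meta$. Subtracting yields
\begin{align*}
& \psi(\mxi_0/\abs{\mxi_0})\,\cF(\tilde\chi^{j,\eps}_\my u)(\mxi_0)
 -\cF\bigl(\tilde\chi^{j,\eps}_\my\,\cA_{\psi}(u)\bigr)(\mxi_0) \\
& \qquad =\int_{\R^D}\cF(\tilde\chi^{j,\eps}_\my)(\mxi_0-\meta)
\bigl[\psi(\mxi_0/\abs{\mxi_0})-\psi(\meta/\abs{\meta})\bigr]\hat u(\meta)\,d\meta,
\end{align*}
so the entire problem reduces to controlling this ``spherical difference'' kernel.

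The next step is the elementary bound $\abs{\mxi_0/\abs{\mxi_0}-\meta/\abs{\meta}}\leq 2\abs{\mxi_0-\meta}/\abs{\mxi_0}$ (which follows from writing the left-hand side as $|b(\mxi_0-\meta)+\meta(b-a)|/(ab)$ with $a=\abs{\mxi_0}$, $b=\abs{\meta}$, and using $\abs{b-a}\leq\abs{\mxi_0-\meta}$), combined with the hypothesis $\psi\in C^1(\S^{D-1})$, which yields
$$\bigl|\psi(\mxi_0/\abs{\mxi_0})-\psi(\meta/\abs{\meta})\bigr|
\lesssim \norm{\nabla\psi}_{L^\infty(\S^{D-1})}\,
\frac{\abs{\mxi_0-\meta}}{\abs{\mxi_0}}.$$
On $\supp\phi_j$ we have $\abs{\mxi_0}\sim 2^j$, so the factor $\abs{\mxi_0}^{\beta}\phi_j(\abs{\mxi_0})/\abs{\mxi_0}$ is bounded by $\lesssim 2^{j(\beta-1)}$. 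Plancherel's theorem and Young's convolution inequality $L^1\ast L^2\to L^2$ then reduce the estimate to bounding $\norm{G_j}_{L^1(\R^D)}\norm{u}_{L^2(\R^D)}$, where $G_j(\meta):=\abs{\meta}\,\bigl|\cF(\tilde\chi^{j,\eps}_\my)(\meta)\bigr|$.

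The $L^1$-norm of $G_j$ is computed directly from the explicit Gaussian expression \eqref{exact}. Splitting $\abs{\meta}\leq\abs{\eta_0}+\sum_{k=1}^{D-1}\abs{\eta_k}$ and applying the Gaussian moment identity \eqref{eq:gaussian-q-moment} with $q=0,1$ in each factor (the Gaussian widths in the $\eta_k$-variables for $k\geq 1$ scale like $2^{\eps j}$, so the corresponding first moments scale like $2^{2\eps j}$, while the normalising prefactor carries $2^{-\eps j(D-1)/2}$) yields $\norm{G_j}_{L^1}\lesssim 2^{\eps j(D+1)/2}$. Combining this with the previously extracted factor $2^{j(\beta-1)}$ reproduces the claimed bound $2^{-j(1-\eps(D+1)/2-\beta)}\norm{\nabla\psi}_{L^\infty(\S^{D-1})}\norm{u}_{L^2(\R^D)}$.

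The only delicate point is the bookkeeping of the $2^{\eps j}$-powers, since the anisotropic dilation built into $\tilde\chi^{j,\eps}_\my$ creates a competition between the shrinking normalisation $2^{-\eps j(D-1)/2}$ and the growing first moments of a Fourier Gaussian of width $\sim 2^{\eps j}$; these must combine to give exactly the exponent $\eps j(D+1)/2$ in $\norm{G_j}_{L^1}$. The remaining ingredients---the spherical Lipschitz estimate, Plancherel, and Young's inequality---are standard and pose no difficulty.
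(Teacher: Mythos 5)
Your proposal is correct and follows essentially the same route as the paper's proof: the convolution-theorem rewriting, the spherical Lipschitz bound combined with $\abs{\mxi_0/\abs{\mxi_0}-\meta/\abs{\meta}}\lesssim\abs{\mxi_0-\meta}/\abs{\mxi_0}$ and $\abs{\mxi_0}\sim 2^j$ on $\supp\phi_j$, Young's inequality with Plancherel, and the explicit Gaussian computation giving $\norm{\abs{\cdot}\,\widehat{\tilde\chi^{j,\eps}_{\my}}}_{L^1}\sim 2^{\eps j(D+1)/2}$. The exponent bookkeeping you flag as the delicate point also matches the paper's computation exactly.
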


\begin{proof}
Set $\phi_j^{(\beta)}:=\abs{\mxi_0}^\beta \phi_j(\abs{\mxi_0})$ 
and note that $\abs{\mxi_0}^\beta \sim 2^{\beta j}$ for
$\abs{\mxi_0}\in \supp\phi_j$, so that 
$0\le \phi_j^{(\beta)}\lesssim 2^{\beta j}$. By the 
convolution theorem for the Fourier transform and 
the definition \eqref{eq:A-psi-def} of $\cA_\psi$,
\begin{align*}
	&\norm{\phi_j^{(\beta)}
	\Bigl( \psi \, \cF\left(\tilde \chi^{j,\eps}_{\my} u\right)
	-\cF\left(\tilde\chi^{j,\eps}_{\my}\,
	\cA_{\psi}(u)\right)\Bigr)}_{L^2(\R^D)}^2
	\\& \quad 
	=\norm{ \phi_j^{(\beta)}
	\Bigl( \psi \,
	\bigl( \widehat{\tilde \chi^{j,\eps}_{\my}}\star \widehat{u}\bigr)
	-\widehat{\tilde \chi^{j,\eps}_{\my}} \star 
	\left(\psi\, \widehat{u}\right) \Bigr)}^2_{L^2(\R^D)} 
	\\ & \quad 
	=\int_{\R^D}\abs{\phi_j^{(\beta)}\int_{\R^D} \left[
	\psi\left(\frac{\mxi_0}{\abs{\mxi_0}}\right)
	-\psi\left(\frac{\meta_0}{\abs{\meta_0}}\right)\right] 
	\widehat{u}(\meta_0)\,
	\widehat{\tilde \chi^{j,\eps}_{\my}}(\mxi_0-\meta_0)
	\,d\meta_0}^2 \, d\mxi_0
	\\ & \quad 
	\leq \norm{\nabla \psi}^2_{\infty}
	\int_{\R^D} \abs{\phi_j^{(\beta)}\int_{\R^D} 
	\abs{\frac{\mxi_0}{\abs{\mxi_0}}
	-\frac{\meta_0}{\abs{\meta_0}}} 
	\, \abs{\widehat{u}(\meta_0)} 
	\, \abs{\widehat{\tilde \chi^{j,\eps}_{\my}}
	(\mxi_0-\meta_0)} 
	\, d\meta_0}^2 \, d\mxi_0
	\\ & \quad
	\lesssim 2^{2\beta j} \norm{\nabla \psi}^2_{\infty} 
	\int_{\abs{\mxi_0}\sim 2^j} 
	\Biggl | \int_{\R^D}
	\abs{\frac{\left(\mxi_0-\meta_0\right)\abs{\meta_0}
	+\meta_0 \left(\abs{\meta_0}-\abs{\mxi_0}\right)}
	{\abs{\meta_0}\abs{\mxi_0}}}
	\\ & \quad\qquad\qquad\qquad\qquad\qquad
	\qquad\qquad\qquad\times 
	\abs{\widehat{u}(\meta_0)}
	\, \abs{\widehat{\tilde \chi^{j,\eps}_{\my}}
	\left(\mxi_0-\meta_0\right)} 
	\, d\meta_0  \Biggr|^2 \, d\mxi_0
	\\ & \quad 
	\lesssim 2^{2(\beta-1) j}\norm{\nabla \psi}^2_{\infty} 
	\int_{\R^D}\abs{\int_{\R^D}  \abs{\widehat{u}(\meta_0)}\, 
	\abs{\mxi_0-\meta_0} \, 
	\abs{\widehat{\tilde \chi^{j,\eps}_{\my}}
	\left(\mxi_0-\meta_0\right)}
	\, d\meta_0}^2 \, d\mxi_0 
	\\ & \quad 
	\leq 2^{2(\beta-1) j} \norm{\nabla \psi}^2_{\infty} 
	\norm{\widehat{u}}_{L^2(\R^D)}^2 \norm{\abs{\cdot}
	\widehat{\tilde \chi^{j,\eps}_{\my}}(\cdot)}_{L^1(\R^D)}^2,
\end{align*} 
where in the last step we applied Young's 
convolution inequality.  

Finally, considering the exact form \eqref{exact} 
of the Fourier transform 
$\widehat{\tilde{\chi}^{j,\eps}_{\my}}(\cdot)$, 
we can estimate the $L^1$ norm 
of $\abs{\cdot}\widehat{\tilde{\chi}^{j,\eps}_{\my}}(\cdot)$
as follows:
$$
\norm{\abs{\cdot}
\widehat{\tilde \chi^{j,\eps}_{\my}}(\cdot)}_{L^1(\R^D)}
\sim 2^{j(D+1)\eps/2}, 
\quad \forall \my \in \R^{D-1}.
$$ 
Here, we have used the identity \eqref{eq:gaussian-q-moment} 
with $q=1$ and an appropriate choice of $a\sim 2^{-2\eps j}$, 
and then extended it to higher dimensions by exploiting the 
multiplicative structure in \eqref{exact}. 
\end{proof}

\section{The regularity estimate}\label{sec:main}

In this section, we prove the fractional Sobolev 
estimate \eqref{aver-quant}. 

\begin{theorem}\label{thm-1}
Let $h$ be a distributional solution 
of the kinetic equation \eqref{eq-1}. 
Assume that the hypotheses $\mathrm{({\bf H1})}$, 
$\mathrm{({\bf H2})}$, 
and $\mathrm{({\bf H3})}$ are satisfied, 
for some $p>1$, $\bar{p}>p'$, and $\bar{s}\in (0,1)$. 
Under the non-degeneracy condition \eqref{non-deg}, 
there exist a number
$r_0\in 
\bigl(1, 
\min\bigl\{
\frac{D}{D-1},
\frac{D-1}{D-1-\bar{s}}, 
\frac{p\bar{p}}{p+\bar{p}}
\bigr\}\bigr)$ 
and a strictly positive, decreasing function 
$\beta_0:(1,r_0)\to\R^+$ such that for 
every $r\in (1,r_0)$ and 
any $\beta<\beta_0(r)$, we have
\begin{equation}\label{aver-quant-mod}
	\R^d\ni \mx_0\mapsto 
	\int_{\R^m} h(\mx_0,\mlambda) \rho(\mlambda)
	\, d\mlambda \in W^{\beta,r}_{\loc}(\R^D),
	\quad \rho\in C^{\abs{\mkappa}}_c(\R^m). 
\end{equation}
\end{theorem}

In the case $\bar{p}=\infty$, the 
term $\frac{p\bar{p}}{p+\bar{p}}$
appearing in the statement is 
to be interpreted as $p$. For information about 
the exponent $\beta_0$, we direct the reader 
to Subsection \ref{subsec:final} 
and to Remark \ref{rem:reg-exp}.

The proof will be developed 
in a series of subsections.

\subsection{Preparations}\label{subsec:prep}
Let $r$ be en exponent such that
\begin{equation}\label{eq:r-p-ass}
	1<r< p,
\end{equation}
where the precise admissible upper 
bound $r_0$ on $r$ will be determined 
in the final step of the proof.
The exponent $p$ appears first in ({\bf H1}). 
Going forward, we will assume that
\begin{equation}\label{eq:r-p-ass_2}
	p<2,
\end{equation}
which entails no loss of generality 
since our focus is on localized spaces. 
However, the other case, $p \geq 2$, leads to a 
slightly more favorable situation and will be 
briefly addressed in the final step of the proof.

Furthermore, we shall assume that $h$ and $\gamma$ 
in \eqref{eq-1-new} are compactly supported in $K_0\times L$ as 
defined in \eqref{eq:K0-def}. Such an assumption does not 
lead to any loss of generality. Indeed, owing to the 
linearity of the kinetic equation \eqref{eq-1-new}, it is possible 
to multiply it by a cutoff function, leading to a modified 
equation of essentially the same type. To be more precise, 
let $\chi \in C_c^\infty(\R^D\times\R^m)$ be a fixed cutoff function
in all variables. Define 
$$
\widetilde{h} := h \chi, 
\quad 
\widetilde{\gamma} := \gamma \chi.
$$  
One can check that the kinetic equation satisfied by the 
compactly supported functions $\tilde{h}$, $\tilde{\gamma}$ 
may be rewritten in a form similar to the original 
equation \eqref{eq-1-new}. 
Clarifying further, differentiation by parts gives
$$
(\partial_\mlambda^\mkappa \gamma) \chi
= \partial_\mlambda^\mkappa \widetilde \gamma
- \!\!\!
\sum_{\substack{\mkappa_1 + \mkappa_2 = \mkappa 
\\ \mkappa_2 \neq 0}} \binom{\mkappa}{\mkappa_1} 
\partial_\mlambda^{\mkappa_1} 
\left(\gamma 
\, \partial_\mlambda^{\mkappa_2} \chi\right).
$$
The main source term $\partial_\mlambda^\mkappa \widetilde \gamma$ 
maintains the same form as before, with the distinction that 
it is now compactly supported across all variables. The new 
additional terms on the right-hand side involve $\mlambda$-derivatives 
of compactly supported measures, but the order of 
differentiation for these terms is lower than that 
in the main term $\partial_\mlambda^\mkappa \widetilde \gamma$.
Thus, it can be omitted from the analysis. 
A similar reasoning applies to the spatial drift term \eqref{eq-1-new}. 
Specifically, we can write 
$$
\Div_{\mx_0} \bigl( (1,\mff(\mx,\mlambda))h\bigr)\chi
=\Div_{\mx_0} \bigl( (1,\mff(\mx,\mlambda))\widetilde h\bigr)
- h\,(1,\mff(\mx,\mlambda))\cdot \nabla_{\mx_0}\chi ,
$$
where the second term on the right-hand side 
can be considered a zero-order source 
term in $L^1_{\mx_0,\mlambda}$. 
Consequently, there is no loss of generality in omitting 
this term from our analysis, as it will not affect 
the derived regularity exponents. 

In summary, from this point forward, we will focus on the 
original equation \eqref{eq-1-new}, assuming 
that $h$ and $\gamma$ have compact support 
in all variables. 

We will frequently use the notation
\begin{equation}\label{eq:fy-def}
	\mff_{\my}(\mlambda)  :=\mff(\my,\mlambda), 
\end{equation}
where $\my\in \R^{D-1}$ is (often) 
viewed as a fixed point.

For later use, let us recall  the truncation functions
\begin{equation}\label{eq:Tl-def}
	\begin{split}
		& T_l(z)=
		\begin{cases}
			z, & \text{if $\abs{z}<l$} \\
			0,& \text{otherwise},
		\end{cases}
		\qquad 
		T^l(z)=z-T_\ell(z)=
		\begin{cases}
			0, & \text{if $\abs{z}<l$} \\
			z,& \text{otherwise},
		\end{cases}
	\end{split}
\end{equation} 
where $l>0$ is the truncation level. 
Under \eqref{eq:r-p-ass} and \eqref{eq:r-p-ass_2},
the following basic estimates hold: 
\begin{equation}\label{eq:Tl-est}
	\begin{split}
		&\norm{T_l(h)}^2_{L^2(K_0\times L)} 
		\leq l^{2-p} \norm{h}^p_{L^p(K_0\times L)}, 
		\\ & 
		\norm{T^l(h)}^r_{L^r(K_0\times L)} 
		\leq l^{r-p}\norm{h}^p_{L^p(K_0\times L)}.
	\end{split}
\end{equation}

The goal is to establish  \eqref{eq:intro-goal}.  
To this end, let us test the kinetic equation \eqref{eq-1-new} 
using the function
$$
(\mx_0,\mlambda)\mapsto 
e^{-2\pi i \mxi_0\cdot \mx_0} \rho(\mlambda) 
\phi_j(\abs{\mxi_0}) \chi^{j,\eps}_{\my}(\mx_0)
\frac{\xi_0+\mff_{\my}(\mlambda) \cdot \mxi}
{\abs{\xi_0+\mff_{\my}(\mlambda) \cdot \mxi}^2
+2^{-\epsilon j}\abs{\mxi}^2},
$$
for an $\epsilon>0$ to be determined later, $j\in\N$, 
and $\mx_0=(x_0,\mx)\in \R^D$.
Here, $\mxi_0=(\xi_0,\mxi)\in \R^D$ and $\my\in \R^{D-1}$ are 
considered as parameters. 
While they may look alike, the parameter $\epsilon$ is 
distinct from the $\eps$ employed in \eqref{eq:chi-jeps-y-def} 
for defining $\chi^{j,\eps}_{\my}(\cdot)$. 
From now on, we will assume that the set $L$ 
from \eqref{eq:K0-def} is defined as $L=\supp(\rho)$, 
where $\rho$ appears in \eqref{aver-quant-mod}.

The result is the following expression:
{\small \begin{equation}\label{new-eq-1}
	\begin{split}
		& 2\pi i \int_{\R^{D+m}}e^{-2\pi i \mxi_0\cdot \mx_0}
		\rho(\mlambda) \phi_j(\abs{\mxi_0}) \chi^{j,\eps}_{\my}(\mx_0)
		\cL_{j,\epsilon}^{(1)}
		h(\mx_0,\mlambda) \, d\mx_0\, d\mlambda
		\\ & \quad
		+2\pi i \int_{\R^{D+m}}e^{-2\pi i \mxi_0\cdot \mx_0} 
		\rho(\mlambda) \phi_j(\abs{\mxi_0}) \chi^{j,\eps}_{\my}(\mx_0) 
		\\ & \qquad \qquad \qquad\qquad \times
		\cL_{j,\epsilon}^{(2)}
		\cdot\bigl(\mff(\mx,\mlambda)-\mff_{\my}(\mlambda)\bigr)
		h(\mx_0,\mlambda) \,d\mx_0\, d\mlambda
		\\ & \quad
		-\int_{\R^{D+m}}e^{-2\pi i \mxi_0\cdot \mx_0}
		\rho(\mlambda) \phi_j(\abs{\mxi_0})
		\cL_{j,\epsilon}^{(3)}
		\bigl((1,\mff(\mx,\mlambda))\cdot \nabla_{\!\mx_0} 
		\chi^{j,\eps}_{\my}(\mx_0)\bigr)
		h(\mx_0,\mlambda) \,d\mx_0 \,d\mlambda
		\\ & \quad\qquad 
		=(-1)^{\abs{\mkappa}} \int_{\R^{D+m}}
		e^{-2\pi i \mxi_0\cdot \mx_0} 
		\phi_j(\abs{\mxi_0}) \, \chi^{j,\eps}_{\my}(\mx_0) 
		\cS_{j,\epsilon}
		\, d\gamma(\mx_0,\mlambda),
	\end{split}
\end{equation}}
where
{\small
\begin{align*}
	&\cL_{j,\epsilon}^{(1)}
	=\frac{\abs{\xi_0+\mff_{\my}(\mlambda) \cdot \mxi}^2 }
	{\abs{\xi_0+\mff_{\my}(\mlambda) \cdot \mxi}^2
	+2^{-\epsilon j}\abs{\mxi}^2},
	\quad 
	\cL_{j,\epsilon}^{(2)}
	=\frac{\bigl(\xi_0+\mff_{\my}(\mlambda)\cdot \mxi\bigr)
	\mxi}{\abs{\xi_0+\mff_{\my}(\mlambda) \cdot \mxi}^2
	+2^{-\epsilon j}\abs{\mxi}^2},
	\\ & 
	\cL_{j,\epsilon}^{(3)}
	=\frac{\xi_0+\mff_{\my}(\mlambda)\cdot \mxi}
	{\abs{\xi_0+\mff_{\my}(\mlambda) \cdot \mxi}^2
	+2^{-\epsilon j}\abs{\mxi}^2},
	\quad 
	\cS_{j,\epsilon}
	=\pa_{\mlambda}^\mkappa
	\left(\rho(\mlambda)
	\frac{\xi_0+\mff_{\my}(\mlambda) \cdot \mxi}
	{\abs{\xi_0+\mff_{\my}(\mlambda) \cdot \mxi}^2
	+2^{-\epsilon j}\abs{\mxi}^2}\right).
\end{align*}}
Using that $\cL_{j,\epsilon}^{(1)}
=1-\widetilde{\cL}_{j,\epsilon}^{(1)}$,
where
\begin{equation}\label{eq:tL1-def}
	\widetilde{\cL}_{j,\epsilon}^{(1)}
	:=\frac{2^{-\epsilon j}\abs{\mxi}^2}
	{\abs{\xi_0+\mff_{\my}(\mlambda) \cdot \mxi}^2
	+2^{-\epsilon j}\abs{\mxi}^2},
\end{equation}
we may reformulate the first term in \eqref{new-eq-1} 
to achieve the following form: 
{\small \begin{equation}\label{new-eq-2}
	\begin{split}
		& 2\pi i \int_{\R^{D+m}}e^{-2\pi i \mxi_0\cdot \mx_0}
		\rho(\mlambda) \phi_j(\abs{\mxi_0}) \chi^{j,\eps}_{\my}(\mx_0)
		h(\mx_0,\mlambda) \, d\mx_0\, d\mlambda
		\\ & 
		=2\pi i \int_{\R^{D+m}}e^{-2\pi i \mxi_0\cdot \mx_0}
		\rho(\mlambda) \phi_j(\abs{\mxi_0}) \chi^{j,\eps}_{\my}(\mx_0)
		\widetilde{\cL}_{j,\epsilon}^{(1)}
		h(\mx_0,\mlambda) \, d\mx_0\, d\mlambda
		\\ & \quad 
		-2\pi i \int_{\R^{D+m}}e^{-2\pi i \mxi_0\cdot \mx_0} 
		\rho(\mlambda) \phi_j(\abs{\mxi_0}) \chi^{j,\eps}_{\my}(\mx_0) 
		\\ & \qquad\qquad \qquad \qquad\qquad \times
		\cL_{j,\epsilon}^{(2)}
		\cdot\bigl(\mff(\mx,\mlambda)-\mff_{\my}(\mlambda)\bigr)
		h(\mx_0,\mlambda) \,d\mx_0\, d\mlambda
		\\ & \quad
		+\int_{\R^{D+m}}e^{-2\pi i \mxi_0\cdot \mx_0}
		\rho(\mlambda) \phi_j(\abs{\mxi_0})
		\cL_{j,\epsilon}^{(3)}
		\bigl((1,\mff(\mx,\mlambda))\cdot \nabla_{\!\mx_0} 
		\chi^{j,\eps}_{\my}(\mx_0)\bigr)
		h(\mx_0,\mlambda) \,d\mx_0 \, d\mlambda
		\\ & \quad 
		+(-1)^{\abs{\mkappa}} \int_{\R^{D+m}}
		e^{-2\pi i \mxi_0 \cdot \mx_0} 
		\phi_j(\abs{\mxi_0}) \chi^{j,\eps}_{\my}(\mx_0) 
		\cS_{j,\epsilon}\, d\gamma(\mx_0,\mlambda).
	\end{split}
\end{equation}}

Recalling \eqref{eq:mxi-prime}, it is evident that
\begin{equation}\label{eq:tL-tS-def}
	\begin{split}
		& \cL_{j,\epsilon}^{(2)}=
		\frac{\bigl(\xi_0'+\mff_{\my}(\mlambda)\cdot \mxi'\bigr)
		\mxi'}{\abs{\xi_0'+\mff_{\my}(\mlambda) \cdot \mxi'}^2
		+2^{-\epsilon j}\abs{\mxi'}^2},
		\\ & 
		\cL_{j,\epsilon}^{(3)}=\frac{1}{\abs{\mxi_0}}
		\frac{\xi_0'+\mff_{\my}(\mlambda)\cdot \mxi'}
		{\abs{\xi_0'+\mff_{\my}(\mlambda) 
		\cdot \mxi'}^2+2^{-\epsilon j}\abs{\mxi'}^2}
		=:\frac{1}{\abs{\mxi_0}}\widetilde{\cL}_{j,\epsilon}^{(3)},
		\\ & 
		\cS_{j,\epsilon}
		=\frac{1}{\abs{\mxi_0}}\pa_{\mlambda}^\mkappa
		\left(\rho(\mlambda)
		\frac{\xi_0'+\mff_{\my}(\mlambda) \cdot \mxi'}
		{\abs{\xi_0'+\mff_{\my}(\mlambda) \cdot \mxi'}^2
		+2^{-\epsilon j}\abs{\mxi'}^2}\right)
		=:\frac{1}{\abs{\mxi_0}}\widetilde{\cS}_{j,\epsilon}.
	\end{split}
\end{equation}

Let $v = v(\mx_0) \in C^\infty_c(\R^D)$ and 
$\varphi = \varphi(\my) \in C^\infty_c(K)$, where 
$K$ is defined in \eqref{eq:K0-def}. 
Multiply the equation \eqref{new-eq-2} by
$\frac{1}{2\pi i} 
\varphi(\my) \overline{\widehat{v}(\mxi_0)}$, and 
integrate the resulting expression 
over $(\my, \mxi_0) \in K \times \R^D$. 
By the Plancherel theorem, it follows that
\begin{equation}\label{eq:init-relation}
	\innb{I,\bar{v}}
	=\innb{R_1,\bar{v}}
	+\innb{R_2,\bar{v}}
	-\innb{R_3,\bar{v}}
	+\innb{R_4,\bar{v}}
	+\innb{R_5,\bar{v}},
\end{equation}
where
\begin{align*}
	&\innb{I,\bar{v}}
	=\int_{K\times \R^D} \cA_{\phi_j} 
	\left(\int_{\R^{m}} \rho(\mlambda) \chi^{j,\eps}_{\my}(\cdot)
	h(\cdot,\mlambda) \, d\mlambda \right)\!(\mx_0) 
	\overline{v(\mx_0)} \varphi(\my)  \, d\mx_0\, d\my,
	\\ & 
	\innb{R_1,\bar{v}}=\int_{K\times \R^D \times \R^m} 
	\rho(\mlambda) \phi_j(\abs{\mxi_0}) 
	\widetilde{\cL}_{j,\epsilon}^{(1)}
	\cF\Bigl(\chi^{j,\eps}_{\my}(\cdot) 
	T_{l_j}(h(\cdot,\mlambda))\Bigr)(\mxi_0) 
	\\ & \qquad\qquad\qquad\qquad\qquad\qquad
	\qquad \qquad\qquad\times
	\overline{\widehat{v}(\mxi_0)} \varphi(\my)
	\, d\mlambda \, d\mxi_0 \, d\my,
	\\ & 
	\innb{R_2,\bar{v}}
	=\int_{K\times {\R^D}\times\R^m}
	\cA_{\phi_j
	\widetilde{\cL}_{j,\epsilon}^{(1)}}
	\Bigl( \rho(\mlambda) \chi^{j,\eps}_{\my}(\cdot)
	T^{l_j}(h(\cdot,\mlambda))\Bigr)(\mx_0) 
	\\ & \qquad\qquad\qquad\qquad\qquad\qquad
	\qquad \qquad\qquad\times
	\overline{v(\mx_0)} \varphi(\my) 
	\, d\mlambda\, d\mx_0\, d\my,
	\\ & 
	\innb{R_3,\bar{v}}
	=\int_{K\times \R^D\times \R^m}
	\cA_{\phi_j
	\cL_{j,\epsilon}^{(2)}}
	\Bigl(\rho(\mlambda) \bigl(\mff(\cdot,\mlambda)
	-\mff_{\my}(\mlambda)\bigr) \chi^{j,\eps}_{\my}(\cdot)
	h(\cdot,\mlambda) \Bigr)(\mx_0) 
	\\ & \qquad\qquad\qquad\qquad\qquad\qquad
	\qquad \qquad\qquad\times
	\overline{v(\mx_0)} \varphi(\my) 
	\, d\mlambda\, d\mx_0\, d\my
	\\ & 
	\langle R_4,\bar{v}\rangle
	=\frac{1}{2\pi i}\int_{K\times \R^D\times \R^m}
	\cA_{\phi_j \cL_{j,\epsilon}^{(3)}}
	\Bigl( \rho(\mlambda)\bigl(1,\mff(\cdot,\mlambda)\bigr)\cdot 
	\nabla_{\!\mx_0}\chi^{j,\eps}_{\my}(\cdot) 
	h(\cdot,\mlambda) \Bigr)(\mx_0)
	\\ & \qquad\qquad\qquad\qquad\qquad\qquad
	\qquad \qquad\qquad\times
	\overline{v(\mx_0)} \varphi(\my)
	\, d\mlambda\, d\mx_0 \, d\my,
	\\ & 
	\innb{R_5,\bar{v}}
	=\frac{(-1)^{\abs{\mkappa}}}{2\pi i}
	\int_{\R^D\times K\times \R^m\times K_0}
	e^{-2\pi i \mxi_0\cdot \mx_0}
	\frac{\phi_j(\abs{\mxi_0})}{\abs{\mxi_0}}
	\chi^{j,\eps}_{\my}(\mx_0) 
	\widetilde{\cS}_{j,\epsilon}
	\\ & \qquad\qquad\qquad\qquad\qquad\qquad
	\qquad \qquad\qquad\times 
	\overline{\widehat{v}(\mxi_0)} \varphi(\my) 
	\, d\gamma(\mx_0,\mlambda)\, d\my\, d\mxi_0.
\end{align*}
The truncation functions $T_{l_j}(\cdot)$, $T^{l_j}(\cdot)$ 
are defined in \eqref{eq:Tl-def}, and 
the truncation level $l_j$ is specified as
\begin{equation}\label{eq:lj-def}
	l_j = 2^{\sigma j}, 
	\quad \sigma > 0,
\end{equation}
with $\sigma$ to be determined later. 
Here, and in other contexts, 
we apply the Fourier transform $\cF$ and the inverse Fourier 
transform $\cF^{-1}$ exclusively with 
respect to $\mx_0 = (x_0, \mx)$ and its 
dual variable $\mxi_0 = (\xi_0, \mxi)$, 
but not to $\mlambda$. 

\medskip

In the following subsections, we will examine each 
term of \eqref{eq:init-relation} individually, 
beginning with the terms on the right-hand side.

\subsection{Estimate of $R_1$}\label{subsec:R1}

Before applying the previously 
proved commutation lemma, we begin 
by splitting $\innb{R_1,\bar{v}}$ 
into $R_{1,1} + R_{1,2}$, where
\begin{align*}
	& R_{1,1}= \int\limits_{K\times \R^D \times \R^m} 
	\rho(\mlambda) \phi_j(\abs{\mxi_0}) \,
	\cF\Bigl(\tilde \chi^{j,\eps}_{\my}(\cdot) 
	\cA_{\widetilde{\cL}_{j,\epsilon}^{(1)}}
	\left(\tilde \chi^{j,\eps}_{\my}(\cdot)
	T_{l_j}(h(\cdot,\mlambda))\right)\Bigr)(\mxi_0) 
	\\ & \qquad\qquad\qquad\qquad\qquad\qquad
	\qquad \qquad\qquad\times
	\overline{\widehat{v}(\mxi_0)} \varphi(\my)
	\, d\mlambda \, d\mxi_0 \, d\my,
	\\ & 
	\Delta^{j,\eps}_{\my}(\mxi_0,\mlambda)
	=\widetilde{\cL}_{j,\epsilon}^{(1)}
	\cF\Bigl(\chi^{j,\eps}_{\my}(\cdot) 
	T_{l_j}(h(\cdot,\mlambda))\Bigr)(\mxi_0) 
	\\ & \qquad\qquad\qquad\qquad
	-\cF\Bigl(\tilde \chi^{j,\eps}_{\my}(\cdot) 
	\cA_{\widetilde{\cL}_{j,\epsilon}^{(1)}}
	\left(\tilde \chi^{j,\eps}_{\my}(\cdot)
	T_{l_j}(h(\cdot,\mlambda))\right)\Bigr)(\mxi_0),
	\\ & 
	R_{1,2}= \int\limits_{K\times \R^D \times \R^m} 
	\rho(\mlambda) \phi_j(\abs{\mxi_0}) 
	\Delta^{j,\eps}_{\my}(\mxi_0,\mlambda)\, 
	\overline{\widehat{v}(\mxi_0)} \varphi(\my)
	\, d\mlambda \, d\mxi_0 \, d\my,
\end{align*}	
recalling that $\left(\tilde \chi^{j,\eps}_{\my}\right)^2
=\chi^{j,\eps}_{\my}$, see \eqref{eq:chi-jeps-y-def}.
We note that the analysis of $R_{1,1}$
is the only place in the entire proof where
the non-degeneracy condition \eqref{non-deg} is used. 

Now, using some basic 
properties of the Fourier transform, the definition 
\eqref{eq:A-psi-def} of an multiplier operator,  the 
Cauchy-Schwarz inequality, and keeping 
in mind the definition \eqref{eq:tL1-def} of the symbol 
$\widetilde{\cL}_{j,\epsilon}^{(1)}$,  we 
estimate $R_{1,1}$ as follows:
\begin{align*}
	\abs{R_{1,1}} & =\Biggl |\int_{K\times \R^D \times L} 
	\rho(\mlambda)
	\cA_{\widetilde{\cL}_{j,\epsilon}^{(1)}}
	\left(\tilde \chi^{j,\eps}_{\my}(\cdot)
	T_{l_j}(h(\cdot,\mlambda))\right)\!(\mx_0)
	\varphi(\my)
	\\ &\qquad\qquad\qquad\quad\quad\times
	\tilde \chi^{j,\eps}_{\my}(\mx_0) 
	\overline{\cA_{\phi_j}(v)(\mx_0)}
	\, d\mlambda \, d\mx_0 \, d\my\Biggr |
	\\ & \le \int_{K}
	\norm{\int_{L} \rho(\mlambda)
	\widetilde{\cL}_{j,\epsilon}^{(1)}
	\cF\left(\tilde \chi^{j,\eps}_{\my}(\cdot)
	T_{l_j}(h(\cdot,\mlambda))\right)\!(\mxi_0)
	\varphi(\my)\, d\mlambda}_{L^2(\R^D)}
	\\ &\qquad\qquad\qquad\quad\quad\times
	\norm{\tilde \chi^{j,\eps}_{\my}(\cdot) 
	\overline{\cA_{\phi_j}(v)(\cdot)}}_{L^2(\R^D)}\, d\my
	\\ & \lesssim_\rho 
	\esssup_{\my\in K}\sup\limits_{\abs{\mxi'_0}=1} 
	\norm{\frac{2^{-\epsilon j}\abs{\mxi'}^2}
	{\abs{\xi'_0+\mff_{\my}(\cdot) \cdot \mxi'}^2
	+2^{-\epsilon j}\abs{\mxi'}^2}}_{L^2(L)}
	\\ & \qquad  \times
	\norm{\tilde \chi^{j,\eps}_{\cdot}(\cdot)
	T_{l_j}(h(\cdot,\cdot))
	\varphi(\cdot)}_{L^2(K\times K_0\times L)}
	\norm{\tilde \chi^{j,\eps}_{\cdot}(\cdot) 
	\overline{\cA_{\phi_j}(v)(\cdot)}}_{L^2(K\times \R^D)}
	\\ & =: R_{1,1}^{(1)} \, R_{1,1}^{(2)} \, R_{1,1}^{(3)}.
\end{align*}
We can bound $R_{1,1}^{(3)}$ in the following way:
\begin{align*}
	\bigl(R_{1,1}^{(3)}\bigr)^2
	& \overset{\eqref{eq:chi-jeps-y-def}}{=}
	C^2\int_{K\times \R^D} e^{-x_0^2}\,
	2^{\eps j(D-1)} \, 
	e^{-\abs{\frac{\mx-\my}{2^{-j\eps}}}^2} 
	\, \abs{\cA_{\phi_j}(v)(\mx_0)}^2 
	\, d\mx_0\, d\my 
	\\ & \quad 
	\le C^2\int_{\R^{D-1}\times \R^D} e^{-\abs{\mz}^2} 
	\, \abs{\cA_{\phi_j}(v)(\mx_0)}^2
	\, d\mx_0  \, d\mz 
	\\ & \quad 
	=C^2\pi^{(D-1)/2}\norm{\cA_{\phi_j}(v)}_{L^2(\R^D)}^2
	\\ & \quad 
	\lesssim \norm{v}_{L^2(\R^D)}^2,
\end{align*}
where we used $e^{-x_0^2} \le 1$, 
the change of variables:
\begin{equation}\label{eq:z-var}
	\mz=\frac{\my-\mx}{2^{-\eps j}} 
	\implies 
	d\mz=2^{\eps j(D-1)}\, d\my,
\end{equation}
and the fact that $\cA_{\phi_j}:L^2(\R^D)\to L^2(\R^D)$ 
is bounded uniformly in $j$, which is 
argued in Section \ref{sec:prelim}.

Let us now turn to $R_{1,1}^{(2)}$. By using 
$\left(\tilde \chi^{j,\eps}_{\my}\right)^2
=\chi^{j,\eps}_{\my}$ and the second 
part of \eqref{eq:unit-mass}, it follows that
\begin{align*}
	R_{1,1}^{(2)} & =
	\left( \int_{K_0\times L} \left( \int_K 
	\chi_\my^{j,\eps}(\mx_0)(\varphi(\my))^2\,d\my\right) 
	\abs{T_{l_j}(h(\mx_0,\mlambda))}^2
	\,d\mx_0 \, d\mlambda \right)^{1/2}
	\\ & \lesssim_\varphi 
	\norm{T_{l_j}(h)}_{L^2(K_0\times L)}
	\\ &
	\overset{\eqref{eq:Tl-est}}{\lesssim}
	l_j^{(2-p)/2}
	\norm{h}^{p/2}_{L^p(K_0\times L)}  
	\overset{\eqref{eq:lj-def}}{=}
	2^{\sigma j(1-p/2)}
	\norm{h}^{p/2}_{L^p(K_0\times L)},
\end{align*}
keeping in mind that $p<2$, see \eqref{eq:r-p-ass_2}, 
and the $h\in L^p$ assumption ({\bf H1}). 
In passing, we note that if $p \geq 2$, the 
truncation function $T_{l_j}$ is not required, 
and the proof simplifies.

For $\zeta>0$ (an additional parameter to 
be determined later), define the set 
$A_j := \seq{\mlambda \in L:
\abs{\xi_0'+\mff_\my(\mlambda) \cdot \mxi'}
< 2^{-j\left(\epsilon - \zeta\right)}}$,
and define the set $A_j^c$ similarly but with $<$ 
replaced by $\ge$, so that $L = A_j \cup A_j^c$. Then
\begin{align*}
	R_{1,1}^{(1)}
	& \le \esssup\limits_{\my\in K} \sup\limits_{\abs{\mxi_0'}=1}
	\norm{\frac{2^{-\epsilon j}\abs{\mxi'}^2}
	{\abs{\xi'_0+\mff_{\my}(\cdot) \cdot \mxi'}^2
	+2^{-\epsilon j}\abs{\mxi'}^2}}_{L^2(A_j)}
	\\ & \quad 
	+\esssup\limits_{\my\in K} \sup\limits_{\abs{\mxi_0'}=1} 
	\norm{\frac{2^{-\epsilon j}\abs{\mxi'}^2}
	{\abs{\xi'_0+\mff_{\my}(\cdot) \cdot \mxi'}^2
	+2^{-\epsilon j}\abs{\mxi'}^2}}_{L^2(A_j^c)}
	\\ & \lesssim_L 
	 2^{-j\left(\epsilon-\zeta\right)\alpha/2}
	 +2^{-j\zeta},
\end{align*}
where we have used the 
non-degeneracy condition \eqref{non-deg},
see also \eqref{eq:fy-def}, to handle 
the first term, which implies that 
$$
\meas\Bigl(\seq{\mlambda\in L : 
\abs{\xi_0' + \mff_\my(\mlambda) \cdot \mxi'}<
2^{-j\left(\epsilon-\zeta\right)}}\Bigr) 
\lesssim 2^{-j\left(\epsilon-\zeta\right)\alpha}.
$$ 

Summarizing our findings for $R_{1,1}$,
\begin{equation}\label{new-eq-31}
	\begin{split}
		\abs{R_{1,1}} &\lesssim 
		R_{1,1}^{(1)}\, R_{1,1}^{(2)}\, R_{1,1}^{(3)}
		\\ & \lesssim 
		\left(2^{-j\left(\epsilon-\zeta\right)\alpha/2}
		+2^{-j\zeta}\right)2^{\sigma j(1-p/2)}
		\norm{h}^{p/2}_{L^p(K_0\times L)}
		\norm{v}_{L^2(\R^D)}
		\\ & \lesssim 
		\left(2^{-j\left(\epsilon\frac{\alpha}{2}
		-\zeta\frac{\alpha}{2}
		-\sigma\left(1-\frac{p}{2}\right)\right)}
		+2^{-j\left(\zeta-\sigma
		\left(1-\frac{p}{2}\right) \right)}\right)
		\norm{h}^{p/2}_{L^p(K_0\times L)}
		\norm{v}_{L^2(\R^D)}.
	\end{split}
\end{equation}

We will tackle the term $R_{1,2}$ by employing the 
commutation lemma (Lemma \ref{lem:commutation}). 
In this context, we specify 
$\beta=0$, $\psi=\widetilde{\cL}_{j,\epsilon}^{(1)}$, and 
$u=\tilde \chi^{j,\eps}_{\my}(\cdot)T_{l_j}(h(\cdot,\mlambda))$, 
noting the bound $\norm{\nabla_{\!\mxi_0} 
\widetilde{\cL}_{j,\epsilon}^{(1)}}_{\infty} 
\lesssim 2^{\epsilon j/2}$. To proceed, 
let us first verify this bound.

For $k \in \seq{1, 2, \dots, D-1}$, where 
the case $k=0$ will be addressed separately, 
let $\my \in K$, $\mlambda \in L$, 
and $\mxi_0 \in \S^{D-1}$. Differentiating gives
\begin{align*}
	 \partial_{\xi_k} \widetilde{\cL}_{j,\epsilon}^{(1)} 
	& = \frac{2 \cdot 2^{-\epsilon j} \xi_k}
	{\abs{\xi_0 + \mff_{\my}(\mlambda) \cdot \mxi}^2 
	+ 2^{-\epsilon j} \abs{\mxi}^2} 
	\\ &\qquad 
	- \frac{2^{-\epsilon j} \abs{\mxi}^2 
	\cdot \left(2(\xi_0+\mff_{\my}(\mlambda) \cdot \mxi) 
	\mff_{\my,k}(\mlambda) + 2\cdot 2^{-\epsilon j} \xi_k\right)}
	{\bigl(\abs{\xi_0 + \mff_{\my}(\mlambda) \cdot \mxi}^2 
	+2^{-\epsilon j} \abs{\mxi}^2\bigr)^2} 
	\\ & = 2A_k - 
	2 \widetilde{\cL}_{j,\epsilon}^{(1)} \left(A_k + B_k\right),
\end{align*}
where
$$
A_k = 
\frac{2^{-\epsilon j} \xi_k}
{\abs{\xi_0 + \mff_{\my}(\mlambda) \cdot \mxi}^2 
+ 2^{-\epsilon j} \abs{\mxi}^2},
\quad 
B_k = 
\frac{\left(\xi_0 + \mff_{\my}(\mlambda) \cdot \mxi\right) 
\mff_{\my,k}(\mlambda)}
{\abs{\xi_0 + \mff_{\my}(\mlambda) \cdot \mxi}^2 
+ 2^{-\epsilon j} \abs{\mxi}^2}.
$$
Clearly, we have $\widetilde{\cL}_{j,\epsilon}^{(1)} \leq 1$. 

We estimate $A_k$ by considering two cases: 
(a) $\abs{\mxi} > \frac{1}{M}$, and 
(b) $\abs{\mxi} \leq \frac{1}{M}$, where $M > 1$ is to 
be determined later. 

Case (a). If $\abs{\mxi} > \frac{1}{M}$, 
then $\abs{\mxi} < M \abs{\mxi}^2$. 
As a result,  
$$
\abs{A_k} \leq \frac{2^{-\epsilon j} 
M \abs{\mxi}^2}{2^{-\epsilon j} \abs{\mxi}^2} = M.
$$

Case (b). If $\abs{\mxi} \leq \frac{1}{M}$, then
$\abs{\xi_0} \geq \frac{\sqrt{M^2-1}}{M}$, 
using $\abs{\xi_0} = \sqrt{1 - \abs{\mxi}^2}$. 
Applying $|a + b|^2 \geq \frac{1}{2}a^2 - b^2$, 
which holds for all $a,b\in \R$,
\begin{equation*}
	\abs{\xi_0 + \mff_{\my}(\mlambda) \cdot \mxi}^2
	\geq \frac12 \abs{\xi_0}^2 
	- \abs{\mff_{\my}(\mlambda) \cdot \mxi}^2 
	\geq \frac12\sqrt{\frac{M^2-1}{M}} 
	- F \frac{1}{M^2}=:\tilde{M},
\end{equation*}
where
\begin{equation}\label{eq:F-def}
	F := \sup_{\my \in K, \lambda \in L} 
	\abs{\mff_{\my}(\mlambda)}<\infty,
\end{equation}
and $\tilde{M}$ is strictly positive for sufficiently 
large $M$. Using this, we obtain
$$
\abs{A_k} \leq \frac{2^{-\epsilon j}\frac{1}{M}}{\tilde M+0}
=\frac{2^{-\epsilon j}}{M\tilde M}.
$$

Combining cases (a) and (b), we deduce that
\begin{equation*}
	\abs{A_k} \leq M 
	+\frac{2^{-\epsilon j}}{\tilde M M} \lesssim 1,
\end{equation*}
where the implicit constant in $\lesssim$ depends 
solely on (the fixed) $\mff$, $K$, and $L$.

We estimate $B_k$ by considering the same two cases: 
(a) $\abs{\mxi} > \frac{1}{M}$ 
and (b) $\abs{\mxi} \leq \frac{1}{M}$. In case (a),
\begin{align*}
	\abs{B_k} &\le 
	\frac{\sqrt{\abs{\xi_0+\mff_{\my}(\mlambda) \cdot \mxi}^2
	+2^{-\epsilon j}\abs{\mxi}^2} \, F}
	{\abs{\xi_0+\mff_{\my}(\mlambda) \cdot \mxi}^2
	+2^{-\epsilon j}\abs{\mxi}^2}
	=\frac{F}{2^{-\epsilon j/2}\sqrt{2^{\epsilon j}
	\abs{\xi_0+\mff_{\my}(\mlambda) \cdot \mxi}^2
	+\abs{\mxi}^2}}
	\\ & \le
	\frac{2^{\epsilon j/2}F}{\sqrt{\frac{1}{M^2}}}
	\lesssim 2^{\epsilon j/2}.
\end{align*}

In case (b), we proceed as follows: 
$$
\abs{\xi_0 + \mff_{\my}(\mlambda) \cdot \mxi}^2
\geq \tilde{M} > 0 \implies \abs{\xi_0
+\mff_{\my}(\mlambda) \cdot \mxi}
> \sqrt{\tilde{M}},
$$
which implies that 
$$
\abs{\xi_0 + \mff_{\my}(\mlambda) \cdot \mxi} 
\leq 
\frac{1}{\sqrt{\tilde{M}}}
\abs{\xi_0 + \mff_{\my}(\mlambda) \cdot \mxi}^2.
$$
Employing this in $B_k$ and 
recalling \eqref{eq:F-def}, we obtain
$$
\abs{B_k} \leq 
\frac{\frac{1}{\sqrt{\tilde{M}}} 
\abs{\xi_0 + \mff_{\my}(\mlambda) \cdot \mxi}^2 F}
{\abs{\xi_0 + \mff_{\my}(\mlambda) \cdot \mxi}^2} 
= \frac{F}{\sqrt{\tilde{M}}} \lesssim 1.
$$

Combining cases (a) and (b) for $B_k$, we arrive at
$$
\abs{B_k} 
\lesssim 2^{\epsilon j/2} + 1 \leq 2^{\epsilon j/2}.
$$

Summarizing (for $k\in \seq{1,\dots,D-1}$), we have shown that
$\abs{\partial_{\xi_k} \widetilde{\cL}_{j,\epsilon}^{(1)}} 
\lesssim 2^{\epsilon j/2}$.

For $k = 0$, we have a simpler situation:
\begin{align*}
	 \partial_{\xi_0} \widetilde{\cL}_{j,\epsilon}^{(1)} 
	 &= \frac{2^{-\epsilon j} 
	 \abs{\mxi}^2 \cdot 2 (\xi_0 + \mff_{\my}(\mlambda) \cdot \mxi)}
	 {(\abs{\xi_0 + \mff_{\my}(\mlambda) \cdot \mxi}^2 
	 + 2^{-\epsilon j}\abs{\mxi}^2)^2} 
	\\ & 
	= 2\widetilde{\cL}_{j,\epsilon}^{(1)}
	\frac{\xi_0 + \mff_{\my}(\mlambda) \cdot \mxi}
	{\abs{\xi_0 + \mff_{\my}(\mlambda) \cdot \mxi}^2 
	+ 2^{-\epsilon j}\abs{\mxi}^2},
\end{align*}
where the last term can be analyzed similarly 
to $B_k$, yielding  
$\abs{\partial_{\xi_0} \widetilde{\cL}_{j,\epsilon}^{(1)}}
\lesssim 2^{\epsilon j/2}$.

Concluding our analysis for $k=0,1,\ldots,D-1$, 
we have demonstrated that
\begin{equation}\label{eq:tilde-L1-xi-deriv}
	\sup_{\my \in K, \mlambda \in L} \norm{
	\nabla_{\mxi_0}
	\widetilde{\cL}_{j,\epsilon}^{(1)}}_{L^\infty(\S^{D-1})} 
	\lesssim 2^{\epsilon j/2}.
\end{equation}

We now return to the estimation of $R_{1,2}$.
We will employ Lemma \ref{lem:commutation} 
with $\psi=\widetilde{\cL}_{j,\epsilon}^{(1)}$, 
see \eqref{eq:tL1-def}, and $u=\tilde \chi^{j,\eps}_{\my}(\cdot)
T_{l_j}(h(\cdot,\mlambda))$, along with the fact that  
$\chi^{j,\eps}_{\my}=\left(\tilde \chi^{j,\eps}_{\my}\right)^2$. 
Applying the Cauchy-Schwarz inequality (first 
with respect to $\mxi_0$ and then with respect to $\my$, while 
utilizing the second part of \eqref{eq:unit-mass}), we 
can derive the following:
\begin{align}
	\abs{R_{1,2}} &\lesssim  
	\int_{K\times\R^m} \abs{\rho(\mlambda)}
	\norm{\phi_j(\abs{\cdot}) 
	\Delta^{j,\eps}_{\my}(\cdot,\mlambda)}_{L^2(\R^D)}
	\norm{v}_{L^2(\R^D)}\abs{\varphi(\my)}
	\, d\mlambda\, d\my
	\notag 
	\\ & 
	\overset{\eqref{commutation}, \eqref{eq:K0-def}}{\lesssim}
	\int_{K \times L} \abs{\rho(\mlambda)} 
	\norm{\nabla_{\!\mxi_0}
	\widetilde{\cL}_{j,\epsilon}^{(1)}}_{L^\infty(\S^{D-1})}
	\norm{\tilde \chi_\my^{j,\eps}(\cdot)
	T_{l_j}(h(\cdot,\mlambda))}_{L^2(K_0)} \, 
	\notag 
	\\ &
	\qquad \qquad \qquad 
	\qquad \quad\quad
	\times \abs{\varphi(\my)} \, d\mlambda \, d\my
	\, 2^{-j\left(1-\eps(D+1)/2\right)}
	\norm{v}_{L^2(\R^D)}
	\notag
	\\ &
	\lesssim \sup_{\my\in K,\mlambda\in L}\norm{\nabla_{\!\mxi_0}
	\widetilde{\cL}_{j,\epsilon}^{(1)}}_{L^\infty(\S^{D-1})}
	\norm{\tilde \chi_\cdot^{j,\eps}(\cdot)
	T_{l_j}(h(\cdot,\cdot))}_{L^2(K\times K_0 \times L)}
	\notag 
	\\ &
	\qquad \qquad \qquad
	\qquad \quad \quad
	\times 2^{-j\left(1-\eps(D+1)/2\right)}
	\norm{v}_{L^2(\R^D)}
	\notag
	\\ &
	\overset{\eqref{eq:unit-mass}, 
	\eqref{eq:tilde-L1-xi-deriv}}{\lesssim} 
	2^{\epsilon j/2}
	\norm{T_{l_j}(h)}_{L^2(K_0\times L)}
	 2^{-j\left(1-\eps(D+1)/2\right)}\norm{v}_{L^2(\R^D)}
	\notag \\ & 
	\overset{\eqref{eq:Tl-est}, \eqref{eq:lj-def}}{\lesssim}
	2^{\epsilon j/2}
	\, 2^{-j\left(1-\eps(D+1)/2\right)}
	\, 2^{\sigma j(1-p/2)}
	\norm{h}^{p/2}_{L^p(K_0\times L)}
	\norm{v}_{L^2(\R^D)}
	\notag \\ & 
	\lesssim 
	2^{-j\left(1-\eps\frac{D+1}{2}
	-\frac{\epsilon}{2}-\sigma\left(1-\frac{p}{2}\right)\right)}
	\norm{h}^{p/2}_{L^p(K_0\times L)}
	\norm{v}_{L^2(\R^D)}.
	\label{comm0term}
\end{align}

Combining \eqref{new-eq-31} 
and \eqref{comm0term}, we obtain an estimate 
for $\abs{\innb{R_1,\bar{v}}}
\le \abs{R_{1,1}}+\abs{R_{1,2}}$:
\begin{equation*}
	\norm{R_1}_{L^2(\R^d)} \lesssim 2^{-j Z} 
	\norm{h}_{L^p(K_0\times L)}^{p/2},
\end{equation*}
where $Z = Z_{\epsilon, \zeta, \sigma, \eps} > 0$ 
can be read off from \eqref{new-eq-31} 
and \eqref{comm0term}:
$$
Z=\min\left\{
\epsilon\frac{\alpha}{2}
-\zeta\frac{\alpha}{2}
-\sigma\left(1-\frac{p}{2}\right),
\zeta-\sigma\left(1-\frac{p}{2}\right), 
1-\eps\frac{D+1}{2}-\frac{\epsilon}{2}
-\sigma\left(1-\frac{p}{2}\right)
\right\}.
$$ 
By Remark \ref{rem:est_compact} it is 
immediate that the estimate above 
holds with the same $Z$ for any $L^r$ spaces, $r<2$.

\subsection{Estimate of $R_2$}

We will leverage the fact that 
the $L^q \to L^q$ bound of the multiplier 
operator $\cA_\psi$ with symbol 
$\psi=\widetilde{\cL}_{j,\epsilon}^{(1)}$, 
as defined in \eqref{eq:tL1-def}, is independent 
of $\mff$, $\epsilon$, and $j$, for any $1<q <\infty$ 
(see \cite[Lemma A.8]{Erceg:2023ab}).
First, by H\"older's inequality 
(with respect to $\mx_0$),
\begin{align*}
	\abs{\innb{R_2,\bar{v}}} & \le 
	\int_{K\times \R^m} 
	R_2^{(1)}(\my,\mlambda)
	\, \abs{\rho(\mlambda)}
	\, \abs{\varphi(\my)}
	\, d\mlambda \, d\my 
	\norm{v}_{L^{r'}(\R^D )},
\end{align*}
where 
\begin{align*}
	R_2^{(1)}(\my,\mlambda)
	&=\norm{\cA_{\phi_j
	\widetilde{\cL}_{j,\epsilon}^{(1)}}
	\Bigl(\chi^{j,\eps}_{\my}(\cdot)
	T^{l_j}(h(\cdot,\mlambda))\Bigr)}_{L^r(\R^D)}
	\\ & = 
	\norm{\cA_{\widetilde{\cL}_{j,\epsilon}^{(1)}}
	\circ \cA_{\phi_j}
	\Bigl(\chi^{j,\eps}_{\my}(\cdot)
	T^{l_j}(h(\cdot,\mlambda))\Bigr)}_{L^r(\R^D)}
	\\ &
	\lesssim_r
	\norm{\cA_{\phi_j}
	\Bigl(\chi^{j,\eps}_{\my}(\cdot)
	T^{l_j}(h(\cdot,\mlambda))\Bigr)}_{L^r(\R^D)}
	\quad \text{(\cite[Lemma A.8]{Erceg:2023ab})}
	\\ & 
	\lesssim_r 
	\norm{\chi^{j,\eps}_{\my}(\cdot)
	T^{l_j}(h(\cdot,\mlambda))}_{L^r(\R^D)},
\end{align*}
where the last inequality follows from the fact
that $\cA_{\phi_j}$ is bounded on $L^r(\R^D)$ uniformly 
in $j$ (see Section \ref{sec:prelim}).
Applying H\"older's inequality (with respect to $\mlambda,\my$), 
and using \eqref{eq:chi-jeps-y-prop1} with $k=0$, $\ell=r$, 
we proceed as follows
\begin{align*}
	&\int_{K\times \R^m}
	\norm{\chi^{j,\eps}_{\my}(\cdot)
	T^{l_j}(h(\cdot,\mlambda))}_{L^r(\R^D)}
	\, \abs{\rho(\mlambda)}\, \abs{\varphi(\my)}
	\, d\mlambda \, d\my
	\\ & \quad 
	\overset{\eqref{eq:K0-def}}{=} 
	\int_{K\times L} 
	\left( \int_{K_0} \abs{\chi^{j,\eps}_{\my}(\mx_0)
	T^{l_j}(h(\mx_0,\mlambda))}^r\, d\mx_0\right)^{1/r}
	\, \abs{\rho(\mlambda)}
	\, \abs{\varphi(\my)}
	\, d\mlambda \, d\my
	\\ & \quad 
	\lesssim_{\rho,\varphi}
	\left(\int_{K_0\times L} \left(
	\int_K \left(\chi^{j,\eps}_{\my}(\mx_0)\right)^r
	d\my\right)
	\abs{T^{l_j}(h(\mx_0,\mlambda))}^r
	\, d\mx_0\, d\mlambda\right)^{1/r}
	\\ & \quad
	\overset{\eqref{eq:chi-jeps-y-prop1}}{\lesssim}
	2^{\eps j \left(D-1\right)/r'}
	\norm{T^{l_j}(h)}_{L^r(K_0\times L)}
	\overset{\eqref{eq:Tl-est}}{\lesssim} 
	2^{\eps j (D-1)/r'}l_j^{1-p/r}
	\norm{h}_{L^p(K_0\times L)}^{p/r}
	\\ & \quad
	\overset{\eqref{eq:lj-def}}{\lesssim} 
	2^{\eps j \left(D-1\right)/r'}
	2^{\sigma j \left(1-p/r\right)}
	\norm{h}_{L^p(K_0\times L)}^{p/r},
\end{align*}
bearing in mind that $r<p$ (see \eqref{eq:r-p-ass}). 
In summary, 
\begin{align}
	\|R_2\|_{L^r(\R^D)} & \lesssim 
	2^{-j\left(\sigma\left(\frac{p}{r}-1\right) 
	-\eps\frac{D-1}{r'}\right)}
	\norm{h}_{L^p(K_0\times L)}^{p/r}.
	\label{new-eq-32}
\end{align}

\subsection{Estimate of $R_3$}\label{subsec:R3}

Before proceeding, we emphasize that the analysis of 
the term $R_3$ is the only part of the proof 
where additional information on the 
spatial regularity of the drift $\mff$ 
is required.

Let us first focus on the operator $\cA_{\psi}$ 
with the multiplier $\psi=\phi_j\cL_{j,\epsilon}^{(2)}$, 
where $\cL_{j,\epsilon}^{(2)}$ is defined 
in \eqref{eq:tL-tS-def}. 
Indeed, for $\mxi_0\in \R^D$, we have
\begin{equation}\label{eq:3rd_term_symbol}
	\begin{split}
		\psi&=\phi_j(\abs{\mxi_0}) \,
		\cL_{j,\epsilon}^{(2)}
		\\ & 
		\overset{\eqref{eq:mxi-prime}}{=}
		\phi_j(\abs{\mxi_0}) \, 
		\frac{\bigl(\xi_0+\mff_{\my}(\mlambda)\cdot \mxi\bigr)\mxi}
		{\abs{\xi_0+\mff_{\my}(\mlambda)\cdot \mxi}^2
		+2^{-\epsilon j}\abs{\mxi}^2}
		\\ & 
		=\phi_j(\abs{\mxi_0}) 
		\frac{\xi_0+\mff_{\my}(\mlambda)\cdot \mxi}
		{\sqrt{\abs{\xi_0+\mff_{\my}(\mlambda) \cdot \mxi}^2
		+2^{-\epsilon j}\abs{\mxi}^2}}
		\\ & \qquad\qquad\qquad\qquad \times
		\frac{\mxi}
		{\sqrt{\abs{\xi_0+\mff_{\my}(\mlambda) \cdot \mxi}^2
		+2^{-\epsilon j}\abs{\mxi}^2}}
		\\ & =: \phi_j(\abs{\mxi_0})\psi^{(1)}(\mxi_0)\psi^{(2)}(\mxi_0).
	\end{split}	
\end{equation}
As previously noted, $\cA_{\phi_j}$ is uniformly bounded on $L^r(\R^D)$
with respect to $j$ (see Section \ref{sec:prelim}).
We therefore turn our attention to the remaining two symbols.
Both $\psi^{(1)}$ and $\psi^{(2)}$ possess homogeneity 
of degree zero, qualifying them as $L^q$-multipliers for 
all $q \in (1, \infty)$ (see \cite[Corollary 6.2.5 
and Example 6.2.6]{Grafakos:2014aa}). 
It is crucial to carefully examine how the bounds of 
the Fourier multipliers are influenced by the parameters 
$\my$, $\mlambda$, $j$, and $\epsilon$. Importantly, 
the norm of a Fourier multiplier operator remains invariant 
under a linear change of coordinates in the 
symbol (see \cite[Proposition 5.3.8]{Hytonen:2016aa} 
or \cite[Lemma 12]{Erceg:2023aa}). 
By applying the linear transformation:
\begin{equation}\label{eq:lin-change-var}
	\eta_0=\xi_0 
	+\mff_{\my}(\mlambda) \cdot \mxi,
	\quad
	\meta=2^{-\epsilon j/2} \mxi,	
\end{equation}
$\psi^{(1)}$ and $\psi^{(2)}$ 
are transformed into:
$$
\meta_0 \mapsto 
\frac{\eta_0}{\abs{\meta_0}},
\quad
\meta_0 \mapsto 
2^{\epsilon j/2}
\frac{\meta}{\abs{\meta_0}},
$$
respectively. As a result, the norm of 
the multiplier operator associated 
with $\psi^{(1)} \psi^{(2)}$ is of order $2^{\epsilon j/2}$. 
This follows from the fact that the norm of a multiplier 
operator corresponding to a product of two functions can 
be estimated separately in terms of the norms of 
the individual multiplier operators and then 
combined multiplicatively 
(see \cite[Proposition 2.5.13]{Grafakos:2014aa}). 

Consequently, the multiplier operator $\cA_{\psi}$, 
associated with the symbol given in \eqref{eq:3rd_term_symbol}, 
is bounded from $L^q$ to itself for any $q \in (1, \infty)$. 
The operator norm is of order 
$2^{j\epsilon/2}$:
$$
\norm{\cA_{\psi}}_{L^q\to L^q}
\lesssim 2^{j \epsilon/2}.
$$  
Therefore, for the $R_3$ term in \eqref{eq:init-relation}, 
we can first apply H\"older's inequality and 
then use this bound to obtain the following result:
\begin{equation*}
	\begin{split}
		\abs{\innb{R_3,\bar{v}}} & \le
		\int\limits_{K\times \R^m}
		\norm{\cA_{\phi_j
		\cL_{j,\epsilon}^{(2)}}
		\Bigl(\bigl(\mff(\cdot,\mlambda)
		-\mff_{\my}(\mlambda)\bigr) \chi^{j,\eps}_{\my}(\cdot)
		h(\cdot,\mlambda) \Bigr)}_{L^r(\R^D)}
		\\ & \qquad\qquad\qquad \qquad
		\qquad \qquad \qquad  \times
		\abs{\rho(\mlambda)}\, \abs{\varphi(\my)}  
		\, d\mlambda\, d\my \, \norm{v}_{L^{r'}(\R^D)}
		\\ & \lesssim_{r} 
		2^{j\frac{\epsilon}{2}}
		\int\limits_{K\times \R^m}
		\norm{\bigl(\mff(\cdot,\mlambda)
		-\mff_{\my}(\mlambda)\bigr) \chi^{j,\eps}_{\my}(\cdot)
		h(\cdot,\mlambda)}_{L^r(\R^D)}
		\\ & \qquad \qquad \qquad 
		\qquad\qquad \qquad \qquad  \times
		\abs{\rho(\mlambda)}\, \abs{\varphi(\my)}  
		\, d\mlambda\, d\my \, \norm{v}_{L^{r'}(\R^D)},
	\end{split}
\end{equation*} where, by H\"older's inequality,
\begin{align*}
	R_3^{(1)} & :=\int_{K\times \R^m}
	\norm{\bigl(\mff(\cdot,\mlambda)
	-\mff_{\my}(\mlambda)\bigr) \chi^{j,\eps}_{\my}(\cdot)
	h(\cdot,\mlambda)}_{L^r(\R^D)}
	\, \abs{\rho(\mlambda)}
	\, \abs{\varphi(\my)}
	\, d\mlambda \, d\my  
	\\ & \lesssim_{\rho,\varphi}
	\left(\int_{K_0\times L} \left(
	\int_K  \left(\Delta^\mff_{\mx,\mlambda}(\my)
	\chi^{j,\eps}_{\my}(\mx_0)\right)^r
	d\my\right)
	\abs{h(\mx_0,\mlambda)}^r
	\, d\mx_0\, d\mlambda\right)^{1/r},
\end{align*}
with $\Delta^\mff_{\mx,\mlambda}(\my)
:=\abs{\mff(\mx,\mlambda)-\mff(\my,\mlambda)}$, 
recalling \eqref{eq:fy-def}. 
We multiply and divide the expression under 
the integral sign by the factor 
$\abs{\mx-\my}^{(s+\frac{D-1}{q})r} e^{x_0^2 r}$, 
where $s\in (0,\bar{s})$, and $q\in (p',\bar p]$
is chosen such that $1/r=1/q+1/p$.
Recall that $1/p'+1/p=1$ and that 
$p$, $\bar s$ and $\bar p$ are 
fixed in ({\bf H1}) and ({\bf H2}).
The choice of $q$ is possible if 
$r\in \bigl(1,\frac{p\bar p}{p+\bar p}\bigr)$, 
a condition that will constrain the final value of $r_0$. 
Note that since $\bar p>p'$ (see ({\bf H2})), 
we automatically have 
$p\geq \frac{p\bar p}{p+\bar p}>1$.

Then, we apply the generalized H\"older 
inequality with $1/r = 1/q + 1/p$ and 
with respect to variables 
$\mx_0$ and $\my$, to obtain
\begin{align*}
	(R_3^{(1)})^r & 
	\lesssim 
	\int_L\bigl(R_3^{(1,1)}(\mlambda)\bigr)^r
	\bigl(R_3^{(1,2)}(\mlambda)\bigr)^r 
	\,d\mlambda,
\end{align*}
where
\begin{align*}
	R_3^{(1,1)}(\mlambda) 
	&:= \norm{\frac{e^{-x_0^2}
	\Delta^\mff_{\mx,\mlambda}(\my)}
	{\abs{\mx-\my}^{(s+\frac{D-1}{q})}}}_{L^q(K_0\times K)},
	\\ 
	R_3^{(1,2)}(\mlambda) 
	&:=\norm{\abs{\mx-\my}^{s+\frac{D-1}{q}} e^{x_0^2} 
	\chi^{j,\eps}_\my(\mx_0)
	h(\mx_0,\mlambda)}_{L^p(K_0\times K)}.
\end{align*}

Using Fubini's theorem and the fact 
that the Gaussian kernel $x_0 \mapsto 
e^{-x_0^2}$ is integrable on $\R$, 
we can easily obtain the following 
result for any $\lambda \in L$:
\begin{align*}
	R_3^{(1,1)}(\mlambda)
	\lesssim \norm{\frac{\Delta^\mff_{\mx,\mlambda}(\my)}
	{\abs{\mx-\my}^{(s+\frac{D-1}{q})}}}_{L^q(K\times K)}
	= \norm{\mff(\cdot,\mlambda)}_{W^{s,q}(K)}
	\overset{({\bf H2})}{\lesssim}_{\!\!\! K,L,s} 1, 
\end{align*}
since $q\leq \bar p$ (see Remark \ref{rem:Sob_Omega_bdd}). 
Note that even if $K$ is not regular (as required 
for the embedding to hold), one can always choose 
a regular set $\tilde K\supset K$ (for instance, a ball) 
and carry out the above argument with 
$\tilde K$, for which the 
assumption ({\bf H2}) still applies. 
To obtain the estimate of $R_3^{(1,2)}$, 
let us note that
\begin{align*}
	\int_K \abs{\mx-\my}^{(s+\frac{D-1}{q})p} & e^{x_0^2 p}
	(\chi^{j,\eps}_\my(\mx_0))^p \,d\my 
	\\ & 
	\overset{\eqref{eq:chi-jeps-y-def}}{=}
	C^{2p} 2^{\eps j(D-1)p} 
	\int_K \abs{\mx-\my}^{(s+\frac{D-1}{q})p}
	e^{-\abs{2^{\eps j}(\mx-\my)}^2 p} \, d\my
	\\ & 
	= C^{2p} 2^{\eps j(D-1)(p-1)}
	2^{-\eps j(s+\frac{D-1}{q})p}
	\int_K \abs{\mz}^{(s+\frac{D-1}{q})p}
	e^{-p \abs{\mz}^2} 
	\\ &\overset{\eqref{eq:gaussian-q-moment}}
	{\lesssim}_{\!\! p,\bar{p},\bar{s}} 
	2^{-\eps j s p} 2^{\eps j(D-1)\frac{p}{r'}},
\end{align*}
where in the second equality we 
used the change of variables 
$\mz=2^{\eps j}(\my-\mx)$ 
(see \eqref{eq:z-var}). Thus, we have 
(for a.e.~$\mlambda\in L$):
\begin{align*}
	R_3^{(1,2)}(\mlambda) 
	\lesssim 2^{-\eps j s} 2^{\eps j(D-1)\frac{1}{r'}}
	\norm{h(\cdot,\mlambda)}_{L^p(K_0)}.
\end{align*}

Returning to $R_3^{(1)}$, it follows that
\begin{align*}
	R_3^{(1)} &\lesssim  
	2^{-\eps j s} 2^{\eps j(D-1)\frac{1}{r'}}
	\left( \int_L
	\norm{h(\cdot,\mlambda)}_{L^p(K_0)}^r
	\,d\mlambda \right)^{1/r} 
	\\ & 
	\lesssim 2^{-\eps j s} 
	2^{\eps j(D-1)\frac{1}{r'}}
	\norm{h}_{L^p(K_0\times L)},
\end{align*}
where another application of H\"older's 
inequality (this time in the $\mlambda$ variable) 
is used, taking advantage of the 
compactness of $L$ and of \eqref{eq:r-p-ass}. 
Thus, our analysis of $R_3$ reveals that
\begin{equation}\label{new-eq-33}
	\norm{R_3}_{L^r(\R^D)}
	\lesssim 
	2^{-j\left(\eps\left(s-\frac{D-1}{r'}\right)
	-\frac{\epsilon}{2}\right)}
	\norm{h}_{L^r(K_0\times L)}.
\end{equation}
Although the implicit multiplicative constant 
in this estimate depends on $s$ (and 
blows up as $s\to\bar{s}$; 
see Remark \ref{rem:Sob_Omega_bdd}), 
it suffices to analyze the exponent 
in \eqref{new-eq-33} at $s=\bar{s}$. Indeed, for any
$\beta<\eps\left(\bar{s}-\frac{D-1}{r'}\right)
-\frac{\epsilon}{2}$, one can choose $s<\bar{s}$ such that
$\beta<\eps\left(s-\frac{D-1}{r'}\right)-\frac{\epsilon}{2}$.

\subsection{Estimate of $R_4$}\label{subsec:R4}

Similar to the previous term, 
the norm of the multiplier 
operator $\mathcal{A}_\psi$ with symbol
$\psi=\phi_j \cL_{j,\epsilon}^{(3)}$, 
where $\mathcal{L}_{j,\epsilon}^{(3)}$ 
is defined in \eqref{eq:tL-tS-def}, is of the 
order of $2^{j(\epsilon/2 - 1)}$. 
Specifically, we have:
\begin{equation}\label{eq:A-bound-R4}
	\norm{\mathcal{A}_\psi}_{L^q \to L^q} 
	\lesssim 
	2^{j(\epsilon/2-1)}.
\end{equation}
Let us discuss the reasoning behind this 
estimate in greater detail.  We have
\begin{align*}
	\psi(\mxi_0)
	& =
	\frac{\phi_j(\abs{\mxi_0})}{\abs{\mxi_0}}
	\frac{\xi_0+\mff_{\my}(\mlambda)\cdot \mxi}
	{\sqrt{\abs{\xi_0+\mff_{\my}(\mlambda) 
	\cdot \mxi}^2+2^{-\epsilon j}\abs{\mxi}^2}}
	\frac{\abs{\mxi_0}}
	{\sqrt{\abs{\xi_0+\mff_{\my}(\mlambda) 
	\cdot \mxi}^2+2^{-\epsilon j}\abs{\mxi}^2}}
	\\ & =:
	\tilde{\psi}^{(0)}(\mxi_0)\psi^{(1)}(\mxi_0)
	\psi^{(3)}(\mxi_0).
\end{align*}
The symbol $\psi^{(1)}$ also appears 
in \eqref{eq:3rd_term_symbol}, and so
the norm bound of the corresponding
multiplier operator is $\lesssim 1$. 

The function $\tilde\psi^{(0)}$ we can write as 
$\tilde\psi^{(0)}(\mxi_0) 
= 2^{-j}\frac{\phi\left(\abs{2^{-j}\mxi_0}\right)}
{\abs{2^{-j}\mxi_0}}$ 
(see \eqref{eq:phi-j-def}).
Then using the fact that the multiplier norm 
is invariant under dilations 
(see \cite[Proposition 2.5.14]{Grafakos:2014aa}), the 
norm of the associated Fourier multiplier 
operator is $\lesssim 2^{-j}$.

Using the linear change of 
variables \eqref{eq:lin-change-var}, 
the analysis of $\psi^{(3)}$ is reduced to the analysis of:
\begin{align*}
	\tilde{\psi}^{(3)}(\meta_0)
	& =\frac{1}{\abs{\meta_0}}
	\sqrt{\left(\eta_0-2^{\epsilon j/2}\,
	\mff_{\my}(\mlambda) \cdot \meta\right)^2 
	+2^{\epsilon j}\abs{\meta}^2}
	\\ & 
	= \sqrt{\frac{\eta_0^2}{\abs{\meta_0}^2}+S
	+2^{\epsilon j}\frac{\abs{\meta}^2}{\abs{\meta_0}^2}},
\end{align*}
where
\begin{align*}
	S & =
	-2\, 2^{\epsilon j/2}\frac{\eta_0}{\abs{\meta_0}}
	\sum_{k=1}^{D-1}\mff_{\my,k}(\mlambda)
	\frac{\eta_k}{\abs{\meta_0}}
	\\ & \qquad 
	+2^{\epsilon j}\sum_{k=1}^{D-1}
	\mff_{\my,k}(\mlambda)^2\frac{\eta_k^2}{\abs{\meta_0}^2}
	+ 2\, 2^{\epsilon j}
	\sum_{1 \leq k < l \leq D-1}
	\mff_{\my,k}(\mlambda)\mff_{\my,l}(\mlambda)
	\frac{\eta_k}{\abs{\meta_0}}
	\frac{\eta_l}{\abs{\meta_0}}.
\end{align*}
Here, we have expanded the square of the difference, 
and then the square of the inner product via 
the simple identity $\left(\sum_k a_k b_k\right)^2 
= \sum_k a_k^2 b_k^2+2\sum_{k < l} a_k b_k a_l b_l$.

Each summand in the second line is homogeneous of 
order zero, thus acts as an $L^q$-multiplier 
for any $q \in (1, \infty)$. 
This introduces the worst norm bound of order 
$2^{\epsilon j}$ (see \cite[Corollary 6.2.5 
\& Example 6.2.6]{Grafakos:2014aa}). 
The conclusion is also based on the facts that 
$\my \in K$, $\mlambda \in L$, and we have 
employed condition ({\bf H2}).  
Now, given that $\tilde{\psi}^{(3)}$ is a composition 
of functions, we apply the Fa\`a di Bruno (chain rule) 
formula \cite{Constantine:1996aa} for 
higher-order derivatives and the Marcinkiewicz multiplier 
theorem \cite[Corollary 6.2.5]{Grafakos:2014aa}, following the same 
approach as in the proof of \cite[Lemma 10]{Erceg:2023aa}. 
This allows us to conclude that the bound of the Fourier multiplier 
operator associated with $\tilde{\psi}^{(3)}$ is of 
order $2^{\epsilon j/2}$. By the preceding discussion, the same 
bound also holds for $\psi^{(3)}$. This concludes 
the proof of \eqref{eq:A-bound-R4}.

Hence, using \eqref{eq:A-bound-R4}, we may 
estimate $R_4$ as follows:
\begin{align*}
	\abs{\innb{R_4,\bar{v}}} &\lesssim 
	\int\limits_{K\times \R^m}
	\norm{\cA_{\phi_j
	\cL_{j,\epsilon}^{(3)}}
	\Bigl(\bigl(1,\mff(\cdot,\mlambda)\bigr)
	\cdot \nabla_{\!\mx_0}\chi^{j,\eps}_{\my}(\cdot) 
	h(\cdot,\mlambda)\Bigr)}_{L^r(\R^D)}
	\\ & \qquad\qquad\qquad \qquad
	\qquad \qquad \qquad  \times
	\abs{\rho(\mlambda)}\, \abs{\varphi(\my)}  
	\, d\mlambda\, d\my \, \norm{v}_{L^{r'}(\R^D)}
	\\ & \lesssim_r
	2^{j(\epsilon/2-1)} 
	\int\limits_{K\times \R^m}
	\norm{\bigl(1,\mff(\cdot,\mlambda)\bigr)
	\cdot \nabla_{\!\mx_0}\chi^{j,\eps}_{\my}(\cdot) 
	h(\cdot,\mlambda)}_{L^r(\R^D)}
	\\ & \qquad\qquad\qquad \qquad
	\qquad \qquad \qquad  \times
	\abs{\rho(\mlambda)}\, \abs{\varphi(\my)}  
	\, d\mlambda\, d\my \, \norm{v}_{L^{r'}(\R^D)}
	\\ & \leq 2^{j(\epsilon/2-1)}
	\left(R_4^{(1)}+R_4^{(2)}\right) 
	\norm{v}_{L^{r'}(\R^D)},
\end{align*}
where
\begin{align*}
	R_4^{(1)} &:= \int\limits_{K\times \R^m}
	\norm{\mff(\cdot,\mlambda)
	\cdot \nabla_{\!\mx}\chi^{j,\eps}_{\my}(\cdot) 
	h(\cdot,\mlambda)}_{L^r(\R^D)}
	\abs{\rho(\mlambda)}\, \abs{\varphi(\my)}  
	\, d\mlambda\, d\my,
	\\ R_4^{(2)} &:= \int\limits_{K\times \R^m}
	\norm{\pa_{x_0}\chi^{j,\eps}_{\my}(\cdot) 
	h(\cdot,\mlambda)}_{L^r(\R^D)}
	\abs{\rho(\mlambda)}\, \abs{\varphi(\my)}  
	\, d\mlambda\, d\my.
\end{align*}

Applying first H\"older's inequality and then using 
({\bf H2}) and \eqref{eq:chi-jeps-y-prop1} 
for $k=1$ and $\ell=r$, it follows that
\begin{align*}
	R_4^{(1)} & \lesssim_{\rho,\varphi}
	\left(\int_{K_0\times L} \left(
	\int_K\abs{\mff(\mx_0,\mlambda)\cdot
	\nabla_{\!\mx}\chi^{j,\eps}_{\my}(\mx_0)}^r
	d\my\right)\abs{h(\mx_0,\mlambda)}^r
	\, d\mx_0\, d\mlambda\right)^{1/r}
	\\ & \lesssim_\mff 
	2^{\eps j\left( \left(D-1\right)
		/r'+1\right)}\norm{h}_{L^r(K_0\times L)}.
\end{align*}

Since 
$$
\abs{\pa_{x_0}\chi^{j,\eps}_{\my}(\mx_0)}
= 2\abs{x_0}\abs{\chi^{j,\eps}_{\my}(\mx_0)}
\lesssim \abs{\chi^{j,\eps}_{\my}(\mx_0)},
$$
where we have used that $\abs{x_0}$ 
is bounded due to $\mx_0\in K_0$ (see \eqref{eq:K0-def}), 
we can apply the same argument to 
$R_4^{(2)}$ (with the change that now 
\eqref{eq:chi-jeps-y-prop1} is used with $k=0$). 
The result is $R_4^{(2)}\lesssim 
2^{\eps j\left( \left(D-1\right)/r'\right)}
\norm{h}_{L^r(K_0\times L)}$. 

Hence,
\begin{equation}\label{new-eq-34}
	\norm{R_4}_{L^r(\R^D)} \lesssim 
	2^{-j\left(1-\frac{\epsilon}{2}
	-\eps\left(\frac{D}{r'}
	+\frac{1}{r}\right)\right)}
\norm{h}_{L^r(K_0\times L)}.
\end{equation}

\subsection{Estimate of $R_5$}

To simplify the presentation, we will introduce additional 
notation. Define $\eps_j$, $\tilde{\phi}_j$, 
and $\cL_{j,\eps}$ as follows:
\begin{align*}
	& \eps_j =2^{-\eps j} 
	\,\, \text{(scaling parameter 
	from \eqref{eq:chi-jeps-y-def})},
	\qquad 
	\tilde\phi_j=\sqrt{\phi_j},
	\\ & 
	\cL_{j,\epsilon}(\my,\mxi_0,\mlambda)
	=\frac{\xi_0+\mff_{\my}(\mlambda) \cdot \mxi}
	{\abs{\xi_0+\mff_{\my}(\mlambda) \cdot \mxi}^2
	+2^{-\epsilon j}\abs{\mxi}^2}.
\end{align*}
Recalling \eqref{eq:mxi-prime}, we 
have $\cL_{j,\epsilon}(\my,\mxi_0',\mlambda)
=\widetilde{\cL}_{j,\epsilon}^{(3)}$. 

Using the new notation, let us express 
the mollifier-like function $\chi^{j,\eps}_{\my}$
defined in \eqref{eq:chi-jeps-y-def} as follows:
\begin{equation}\label{eq:J-def-new}
	\chi^{j,\eps}_{\my}(\mx_0) 
	= \pi^{-1/2}e^{-x_0^2} \, \frac{1}{\eps_j^{\left(D-1\right)}}
	J\left(\frac{\mx-\my}{\eps_j}\right),
	\quad J(\mz)=\pi^{-(D-1)/2}e^{-\abs{\mz}^2}.
\end{equation}
After a change of variables \eqref{eq:z-var}, 
$\tilde R_5:=\frac{2\pi i}{(-1)^{\abs{\mkappa}}}
\pi^{1/2}R_5$ becomes 
\begin{align*}
	\innb{\tilde R_5,\bar{v}} 
	& =\int e^{-2\pi i \mxi_0\cdot \mx_0}
	\frac{\phi_j(\abs{\mxi_0})}{\abs{\mxi_0}}
	\, e^{-x_0^2}\, J(\mz)\, 
	\pa_{\mlambda}^\mkappa\Bigl(\rho(\mlambda)
	\cL_{j,\epsilon}(\mx+\eps_j\mz, \mxi_0',\mlambda)\Bigr)
	\\ & \qquad \qquad\qquad\qquad\qquad\qquad \times 
	\overline{\widehat{v}(\mxi_0)}
	\, \varphi\left(\mx+\eps_j\mz\right)
	\, d\gamma(\mx_0,\mlambda) \, d\mz \, d\mxi_0.
\end{align*}
Defining  
\begin{align}
	M_{j,\epsilon,\eps}(\mx_0,\mz)
	& =\cF_{\mxi_0}^{-1}
	\Biggl(\tilde \phi_j\left(\abs{\cdot_{\mxi_0}}\right)
	\int e^{-2\pi i \cdot_{\mxi_0}\cdot \tilde{\mx}_0}
	\pa_{\mlambda}^\mkappa
	\left(
	\rho(\mlambda)\,
	\cL_{j,\epsilon}\Bigl(\tilde{\mx}+\eps_j \mz, 
	\frac{\cdot_{\mxi_0}}{\abs{\cdot_{\mxi_0}}},\mlambda\Bigr)
	\right) 
	\notag \\ & \qquad\qquad\qquad\qquad\qquad \times 
	e^{-\tilde x_0^2}\varphi(\tilde{\mx}+\eps_j {\mz}) 
	\, d\gamma\left(\tilde\mx_0,\mlambda\right)\Biggr)(\mx_0),
	\label{meas-trans}
\end{align}
we can express $\tilde R_5$ as follows:
\begin{equation}\label{eq:tildeR5-tmp}
	\innb{\tilde R_5,\bar{v}} 
	=\int \abs{\mxi}^{-1} 
	\tilde{\phi}_j(\abs{\mxi_0}) \, J(\mz) \,
	\cF_{\mx_0}\bigl(M_{j,\epsilon,\eps}
	(\cdot,\mz)\bigr)(\mxi_0)
	\, \overline{\widehat v(\mxi_0)} \, d\mz \, d\mxi_0.
\end{equation}
To avoid a notation conflict with the variable $\mx_0$ at which 
\eqref{meas-trans} is evaluated, we have introduced 
$\tilde{\mx}_0$ as a new integration variable for 
the measure $\gamma$ inside the inverse Fourier transform. 
Additionally, the integration domains have been omitted for 
simplicity, as they can be inferred from the context (see
also \eqref{eq:K0-def} and the definition of $R_5$).

We will demonstrate that the mapping
$\mx_0\mapsto M_{j,\epsilon,\eps}(\mx_0,\mz)$ 
is an $L^r$ function. Moreover, when weighted against $J(\mz)$, 
$M_{j,\epsilon,\eps}(\cdot,\mz)$ remains bounded 
independently of $\mz$.  To achieve this, we fix 
$V \in C^\infty_c(\R^D)$ and $\mz \in \R^{D-1}$, 
and examine
\begin{align}
	\notag 
	& \abs{\innb{M_{j,\epsilon,\eps}(\cdot,\mz),V}}
	\\ & \qquad
	=\Biggl | \int e^{-2\pi i \mxi_0\cdot
	\left(\tilde{\mx}_0-\mx_0\right)}
	\, \tilde{\phi}_j\left(\abs{\mxi_0}\right)\,
	\pa_{\mlambda}^\mkappa
	\Bigl(\rho(\mlambda)\, \cL_{j,\epsilon}
	(\tilde{\mx}+\eps_j \mz,\mxi_0', \mlambda)\Bigr)
	\notag \\ & \qquad\qquad\qquad\qquad\qquad \times
	e^{-\tilde x_0^2}\varphi(\tilde{\mx}+\eps_j {\mz}) 
	\, V(\mx_0) 
	\, d\gamma\left(\tilde\mx_0,\mlambda\right) 
	\,d\mx_0 \, d\mxi_0  \Biggr |
	\notag \\ & \qquad
	\overset{\eqref{eq:K0-def}}{\leq} 
	\sup\limits_{(\tilde{\mx}_0,\mlambda)\in K_0\times L}
	\abs{\widetilde M_{j,\epsilon,\eps}(\tilde \mx_0,\mlambda;\mz)}
	\, \norm{\gamma}_{\cM(\R^D\times \R^m)},
	\label{m-1} 
\end{align} where 
\begin{align*}
	\widetilde M_{j,\epsilon,\eps}(\tilde \mx_0,\mlambda;\mz)
	& =\int e^{-2\pi i \mxi_0\cdot \left(\tilde{\mx}_0-\mx_0\right)}\,
	\tilde{\phi}_j\left(\abs{\mxi_0}\right)
	e^{-\tilde x_0^2}\varphi(\tilde{\mx}+\eps_j {\mz}) 
	\\ & \qquad\qquad\times 
	\pa_{\mlambda}^\mkappa
	\Bigl(\rho(\mlambda)\cL_{j,\epsilon}
	(\tilde{\mx}+\eps_j \mz,\mxi_0', \mlambda)\Bigr)
	\, V(\mx_0) \, d\mx_0 \, d\mxi_0.
\end{align*}
Applying the change of variables
$$
\mxi_0=2^j \meta_0,
\quad 
\mx_0=2^{-j}\my_0,
\quad 
d\mx_0\, d\mxi_0
=d\my_0\,d\meta_0,
$$
we obtain
\begin{align*}
	\widetilde M_{j,\epsilon,\eps}(\tilde \mx_0,\mlambda;\mz)
	& =\int e^{-2\pi i 2^j\meta_0
	\cdot \left(\tilde{\mx}_0-2^{-j}\my_0\right)}
	\, \tilde{\phi}(\abs{\meta_0})
	e^{-\tilde x_0^2}\varphi(\tilde{\mx}+\eps_j {\mz}) 
	\\ & \qquad\qquad\times
	\pa_{\mlambda}^\mkappa
	\Bigl(\rho(\mlambda)\cL_{j,\epsilon}
	(\tilde{\mx}+\eps_j \mz,\meta_0',\mlambda)\Bigr)
	\, V(2^{-j}\my_0) \,d\my_0\, d\meta_0,
\end{align*}
and so
\begin{equation}\label{eq:tildeM-est-tmp}
	\begin{split}
		&\abs{\widetilde M_{j,\epsilon,\eps}
		(\tilde \mx_0,\mlambda;\mz)}
		\le \norm{\varphi}_{L^\infty(\R^{D-1})}
		2^{j D/r'} \norm{V}_{L^{r'}(\R^D)}
		\biggl(\int_{\R^D} \abs{J}^r\, d\my_0\biggr)^{1/r}, 
		\\ & 
		J:=\int_{\R^D} e^{-2\pi i \meta_0\cdot 
		\left(2^j \tilde{\mx}_0-\my_0\right)}\, 
		\tilde{\phi}(\abs{\meta_0})\,
		\pa_{\mlambda}^\mkappa
		\Bigl(\rho(\mlambda)\cL_{j,\epsilon}
		(\tilde{\mx}+\eps_j \mz,\meta_0',\mlambda)\Bigr) 
		\, d\meta_0,
	\end{split}
\end{equation}
reminding that $\mw:=\tilde{\mx} + \eps_j \mz$ 
is confined within the compact set $K$ 
(since $\tilde\mx+\eps_j\mz\in\supp\varphi$).
Here, $\meta_0'$ is defined as in \eqref{eq:mxi-prime}: $\meta_0' 
= \frac{\meta_0}{\abs{\meta_0}}$, $\meta_0'=(\eta_0',\meta')$. 
Further, recalling \eqref{eq:phi-j-def}, we have 
$\phi_j(\abs{\mxi_0}) := \phi\left(2^{-j}\abs{\mxi_0}\right)
=\phi(\meta_0)$, and $\tilde{\phi}$ is 
defined by $\tilde\phi = \sqrt{\phi}$.

Let us estimate the integrand $\abs{J}$. Note that
\begin{align*}
	J & =\int_{\R^D}\!
	\frac{\partial_{\meta_0}^{2\malpha}
	e^{2\pi i \meta_0\cdot \left(\my_0-2^j \tilde{\mx}_0\right)}}
	{\bigl(2\pi i \left(\my_0-2^j \tilde{\mx}_0\right)
	\bigr)^{2\malpha}}
	\, \tilde{\phi}(\abs{\meta_0})\,
	\pa_{\mlambda}^\mkappa
	\Bigl(\rho(\mlambda)\cL_{j,\epsilon}
	(\tilde{\mx}+\eps_j \mz,\meta_0',\mlambda)\Bigr) 
	\, d\meta_0
	\\ & = \frac{(-1)^{\abs{2\malpha}}}{(2\pi i)^{\abs{2\malpha}}}
	\! \int_{\R^D}\!\!
	\frac{e^{2\pi i \meta_0\cdot\left(\my_0-2^j \tilde{\mx}_0\right)}}
	{\left(\my_0-2^j \tilde{\mx}_0\right)^{2\malpha}}
	\partial_{\meta_0}^{2\malpha}\!
	\biggl((\tilde{\phi}(\abs{\meta_0})
	\pa_{\mlambda}^\mkappa
	\Bigl(\rho(\mlambda)\cL_{j,\epsilon}
	(\tilde{\mx}+\eps_j \mz,\meta_0',\mlambda)\Bigr)
	\biggr)\, d\meta_0,
\end{align*}
for any multi-index $\malpha=(\alpha_0,\alpha_1,\ldots,\alpha_{D-1}) 
\in \seq{0,1}^D$.  When we want to emphasize the 
role of $\malpha$ on the right-hand 
side, we will denote it by $J_{\malpha}$. Writing
\begin{align*}
	&\partial_{\meta_0}^{2\malpha}\!
	\left(\tilde{\phi}(\abs{\meta_0})
	\pa_{\mlambda}^\mkappa
	\Bigl(\rho(\mlambda)\cL_{j,\epsilon}
	(\tilde{\mx}+\eps_j \mz,\meta_0',\mlambda)\Bigr)
	\right)
	\\ & \qquad 
	=\sum_{\mbeta\leq 2 \malpha}
	\begin{pmatrix}
		2\malpha\\
		\mbeta
	\end{pmatrix}
	\partial_{\meta_0}^{\mbeta}
	\tilde{\phi}(\abs{\meta_0})
	\, \partial_{\meta_0}^{2\malpha-\mbeta}
	\pa_{\mlambda}^\mkappa
	\Bigl(\rho(\mlambda)\cL_{j,\epsilon}
	(\tilde{\mx}+\eps_j \mz,\meta_0',\mlambda)\Bigr),
\end{align*}
we arrive at
\begin{align*}
	\abs{J_\malpha} & \lesssim
	\frac{1}{\left(\my_0-2^j \tilde{\mx}_0\right)^{2\malpha}}
	\\ & \qquad \times 
	\int_{\R^D}\sum_{\mbeta\leq 2 \malpha}
	\abs{\partial_{\meta_0}^{\mbeta}
	\tilde{\phi}(\abs{\meta_0})}
	\, \abs{\partial_{\meta_0}^{2\malpha-\mbeta}
	\pa_{\mlambda}^\mkappa
	\Bigl(\rho(\mlambda)\cL_{j,\epsilon}
	(\tilde{\mx}+\eps_j \mz,\meta_0',\mlambda)\Bigr)}
	\, d\meta_0.
\end{align*}
Note that the order of $2\malpha-\mbeta$ is bounded 
by $2D$. Given that $\alpha_i \in \seq{0,1}$, it follows 
that $2\alpha_i \in \seq{0,2}$. 
Consequently, $2\alpha_i-\beta_i \in \seq{0,1,2}$. 
Define
\begin{equation}\label{eq:C-kappa-def}
	C_{j,\epsilon,\eps}^{\mkappa}:= 
	\sup_{\substack{\mw \in K, 
	\, \abs{\meta_0} \in \supp(\tilde{\phi}), 
	\, \mlambda\in \R^m \\
	\tilde \mbeta \in \seq{0,1,2}^D,
	\, \abs{\tilde \mbeta}\le 2D}}
	\abs{\partial_{\meta_0}^{\tilde\mbeta}
	\pa_{\mlambda}^\mkappa
	\Bigl(\rho(\mlambda)\cL_{j,\epsilon}
	(\mw,\meta_0',\mlambda)\Bigr)}.
\end{equation}
Then, for any $\tilde \mx_0,\my_0\in \R^D$, 
$\mlambda\in \R^m$, $\mz\in \R^{D-1}$, 
and any multi-index $\malpha\in \seq{0,1}^D$,
\begin{equation}\label{eq:tildeM-est-tmp2}
	\begin{split}
		\abs{J} & \le
		C_{j,\epsilon,\eps}^{\mkappa}\frac{1}
		{\left(\my_0-2^j \tilde{\mx}_0\right)^{2\malpha}}
		\sum_{\mbeta\leq 2 \malpha} 
		\left(\int_{\R^D}\abs{\partial_{\meta_0}^{\mbeta}
		\tilde{\phi}(\abs{\meta_0})}	
		\, d\meta_0 \right)
		\\ & \lesssim_{\phi}
		C_{j,\epsilon,\eps}^{\mkappa}\frac{1}
		{\left(\my_0-2^j \tilde{\mx}_0\right)^{2\malpha}} 
		=C_{j,\epsilon,\eps}^{\mkappa}
		\prod_{k=0}^{D-1}
		\frac{1}{\left(y_k-2^j \tilde{x}_k\right)^{2\alpha_k}}.
	\end{split}
\end{equation}

Consider now the multi-dimensional 
integral $\int_{\R^D} \abs{J}^r\, d\my_0$ 
from \eqref{eq:tildeM-est-tmp}:
$$
\int_{\R^D} \abs{J}^r\, d\my_0
=\int_{\R}\int_{\R}\cdots \int_{\R}
\abs{J}^r\, dy_0\, dy_1\cdots \, dy_{D-1}.
$$
For the $k$-th iterated integral, we split 
the domain $\R$ into $A_k\cup B_k$, where 
$$
A_k=\seq{y_k\in \R: \abs{y_{k}-2^j\tilde{x}_{k}}\leq 1},
\quad
B_k=\seq{y_k\in \R: \abs{y_{k}-2^j\tilde{x}_{k}}> 1},
$$
so that 
$$
\int_{\R^D} \abs{J}^r \, d\my_0
=\left(\int_{A_0}+\int_{B_0}\right)
\cdots \left(\int_{A_{D-1}}+\int_{B_{D-1}}\right)
\abs{J}^r\, dy_0\cdots \, dy_{D-1}.
$$ 
Let us introduce a multi-index notation 
$\malpha = (\alpha_0, \alpha_1, \ldots, \alpha_{D-1})
\in \seq{0,1}^D$, where each $\alpha_k$ takes a value 
of $0$ or $1$. This notation will help us 
keep track of all the different integrals where each 
variable $y_k$ is integrated over either $A_k$ or $B_k$. 
We define $\alpha_k$ as follows:  
If the integration with respect to the variable $y_k$ is
over the set $A_k$, we define $\alpha_k=0$; whereas if
$y_k$ is integrated over the set $B_k$, we assign
$\alpha_k=1$.

We can use the multi-index $\malpha$ to split 
$\int_{\R^D} \abs{J}^r\, d\my_0$ into a sum of $2^D$ terms, each 
corresponding to a different combination of the sets 
$A_k$ and $B_k$ for the variables $y_k$. Specifically:
$$
\int_{\R^D} \abs{J}^r \, d\my_0 
= \sum_{\malpha \in \seq{0,1}^D}
\int_{S_{\alpha_0}}\int_{S_{\alpha_1}} 
\cdots 
\int_{S_{\alpha_{D-1}}} 
\abs{J}^r \, dy_0 \, dy_1 \, \cdots \, dy_{D-1},
$$
where each $S_{\alpha_k}\subset \R$ is given by
$$
S_{\alpha_k} =
\begin{cases}
	A_k & \text{if } \alpha_k = 0, \\
	B_k & \text{if } \alpha_k = 1.
\end{cases}
$$
Given a multi-index $\malpha$ from the above sum, 
we can apply the estimate in \eqref{eq:tildeM-est-tmp2} 
for $\abs{J}^r$ corresponding to this specific 
choice of $\malpha$, leading to:
\begin{multline}
	\int_{\R^D} \abs{J}^r \, d\my_0
	\le (C_{j,\epsilon,\eps}^{\mkappa})^r
	\!\!\!\sum_{\malpha\in \seq{0,1}^D}
	\prod_{k=0}^{D-1}\left(\int_{S_{\alpha_k}}
	\frac{1}{\left(y_k-2^j \tilde{x}_k\right)^{2 r\alpha_k}}
	\, dy_k\right)
	\\ =(C_{j,\epsilon,\eps}^{\mkappa})^r
	\!\!\! \sum_{\malpha\in \seq{0,1}^D}
	\left(\, \prod_{k:\, \alpha_k=0} 
	\int_{A_k} 1 \, dy_k \right) 
	\left(\, \prod_{k:\, \alpha_k=1}\int_{B_k} 
	\frac{1}{\left(y_k-2^j \tilde{x}_k\right)^{2 r}}
	\, dy_k\right)
	\\  \lesssim_D (C_{j,\epsilon,\eps}^{\mkappa})^r
	\sum_{\malpha\in \seq{0,1}^D}
	\left(\int_{\abs{\omega} \geq 1} \frac{1}{\omega^{2 r}}\, d\omega 
	\right)^{\abs{\malpha}}
	\lesssim (C_{j,\epsilon,\eps}^{\mkappa})^r .
	\label{eq:int-Y-y-est}
\end{multline}

Let us explicitly bound $C_{j,\epsilon,\eps}^{\mkappa}$ 
in terms of $j$, $\epsilon$, $\eps$, and $\abs{\mkappa}$. 
First, starting from \eqref{eq:C-kappa-def} and 
applying the Leibniz formula in $\mlambda$, together 
with the smoothness of $\rho$, we obtain
\begin{equation*}
	C_{j,\epsilon,\eps}^{\mkappa}
	\lesssim 
	\sup_{\substack{\mw \in K, 
	\, \abs{\meta_0} \in \supp(\tilde{\phi}), 
	\, \mlambda\in L \\
	\tilde \mbeta \in \seq{0,1,2}^D,
	\, \abs{\tilde \mbeta}\le 2D \\ 
	\tilde{\mkappa}\in \N_0^m, 
	\tilde{\mkappa}\le \mkappa}}
	\abs{\partial_{\meta_0}^{\tilde\mbeta}
	\pa_{\mlambda}^{\tilde{\mkappa}}
	\cL_{j,\epsilon}(\mw,\meta_0',\mlambda)},
	\quad 
	\meta_0'=\frac{\meta_0}{\abs{\meta_0}}.
\end{equation*}

To proceed, we recall the multivariate 
Fa{\`a} di Bruno (chain rule) formula 
for higher-order derivatives 
(see, e.g., \cite[Theorem 2.1]{Constantine:1996aa}).
Let $F:\R^D\to\R$ and $G:\R^{\bar{D}}\to\R^D$ 
be smooth functions, and define 
(we intentionally use non-bold $x$ 
to distinguish it from other variables)
$$
H(x)=F(G(x)).
$$ 
For any multi-index $\malpha\in\N_0^{\bar{D}}$ 
with total order $|\malpha|=n$, we have
$$
\partial^{\malpha}H(x)
=\malpha! \sum_{1\leq \abs{\mbeta}\leq n}(\partial^\mbeta F)(G(x))
\sum_{p(\malpha,\mbeta)} \prod_{i=1}^n
\frac{\prod_{k=1}^D \bigl(\partial^{\momega_i}G_k(x)
\bigr)^{(\mdelta_i)_k}}{(\mdelta_i!)(\momega_i!)^{|\mdelta_i|}},
$$
where the convention $0^0=1$ is used, and 
$p(\malpha,\mbeta)$ denotes the set of ordered $2n$-tuples
$(\mdelta_1,\dots,\mdelta_n,\momega_1,\dots,\momega_n)\in 
(\N_0^D)^n\times (\N_0^{\bar D})^n$; we refer to 
\cite[Remark 2.2(i)]{Constantine:1996aa} 
for the precise definition. 
Let us highlight two identities relevant to our analysis: 
$\sum_{i=1}^n \mdelta_i=\mbeta$ and 
$\sum_{i=1}^n\abs{\mdelta_i} \momega_i 
= \malpha$. 

Let us apply this formula with
$$
F(\meta_0)=\cL_{j,\epsilon}(\mw,\meta_0,\mlambda),
\quad 
G(\meta_0)=\meta_0'=\frac{\meta_0}{\abs{\meta_0}},
$$
where $G$ is homogeneous of order zero.
One can check that for any multi-index $\momega$,
$$
\abs{\partial^{\momega}G(\meta_0)}
\lesssim_{\abs{\momega}}
1/\abs{\meta_0}^{\abs{\momega}} 
\lesssim 1,
$$
assuming that $\abs{\meta_0}$ is bounded away from zero
(which is the case since $\abs{\meta_0}\in 
\supp\tilde\phi\subseteq (1/2,2)$;
see \eqref{eq:phi-j-def}).
Then by the Fa{\`a} di Bruno formula,
$$
\partial^{\tilde{\mbeta}}F(G(\meta_0))
\le \sum_{\tilde{\tilde{\beta}}
\le\tilde{\beta}}
\abs{(\partial^{\tilde{\tilde{\mbeta}}}
F)\bigl(G(\meta_0)\bigr)}.
$$
Upon reversing the substitutions, we retrieve
$$
\abs{\pa^{\tilde \mbeta}_{\meta_0}
\cL_{j,\epsilon}(\cdot,\meta'_0,\cdot)}
\lesssim 
\sum_{\tilde{\tilde{\beta}}
\le\tilde{\beta}}
\abs{\bigl(\pa^{\tilde \mbeta}_{\meta_0} 
\cL_{j,\epsilon} \bigr)(\cdot,\meta'_0,\cdot)}.
$$
Hence,
\begin{equation*}
	C_{j,\epsilon,\eps}^{\mkappa}
	\lesssim 
	\sup_{\substack{\mw \in K, 
	\, \meta_0\in \S^{D-1}, 
	\, \mlambda\in L \\
	\tilde \mbeta \in \seq{0,1,2}^D,
	\, \abs{\tilde \mbeta}\le 2D \\ 
	\tilde{\mkappa}\in \N_0^m, 
	\tilde{\mkappa}\le \mkappa}}
	\abs{\bigl(\partial_{\meta_0}^{\tilde\mbeta}
	\pa_{\mlambda}^{\tilde{\mkappa}}
	\cL_{j,\epsilon}\bigr)(\mw,\meta_0,\mlambda)}.
\end{equation*}

Next, let us apply the Fa{\`a} di Bruno formula with
$$
F(\meta_0)=\frac{\eta_0}{\abs{\meta_0}^2},
\quad 
G(\meta_0,\mlambda)
=\bigl(\eta_0+\mff_{\mw}(\mlambda) \cdot \meta,\,
2^{-\epsilon j/2}\meta\bigr),
$$
so that
\begin{align*}
	&\partial_{\meta_0}^{\malpha} 
	G(\meta_0,\mlambda)=0,
	\quad \abs{\malpha}\ge 2, 
	\quad 
	\partial_{\mlambda}^{\tilde{\malpha}}
	G(\meta_0,\mlambda)=
	\bigl(\partial_{\mlambda}^{\malpha}
	\mff_{\mw}(\mlambda) \cdot \meta,\, 0\bigr), \\
	& \abs{\partial_{\meta_0}^{\malpha} 
	F(\meta_0)}\lesssim 
	1/\abs{\meta_0}^{\abs{\malpha}+1}.
\end{align*}
To obtain the final estimate, note that for any $A>0$, 
$F(A\meta_0) = A^{-1} F(\meta_0)$, so differentiating 
gives $\partial_{\meta_0}^{\malpha} F(\meta_0) 
= A^{\abs{\malpha}+1} (\partial_{\meta_0}^{\malpha} F)(A\meta_0)$. 
Setting $A = \abs{\meta_0}^{-1}$ for $\meta_0 \neq 0$ yields 
$\partial_{\meta_0}^{\malpha} F(\meta_0)=
(\partial_{\meta_0}^{\malpha} F)(\meta_0')/
\abs{\meta_0}^{\abs{\malpha}+1}$, and since 
$(\partial_{\meta_0}^{\malpha} F)(\cdot)$ is bounded 
on $\S^{D-1}$, we obtain the estimate.

By ({\bf H2}), $\meta_0\in \S^{D-1}$, $\mw\in K$,
and $\mlambda\in L$, it holds that 
$\abs{\partial_{\meta_0}^{\malpha}
\partial_{\mlambda}^{\tilde{\malpha}}
G(\meta_0,\mlambda)}\lesssim 1$. 
Therefore, by the Fa{\`a} di Bruno formula, and 
since $\abs{G(\meta_0,\mlambda)}
\gtrsim 2^{-\epsilon j/2}$ (this can be shown 
using the procedure for $R_{1,2}$ 
in Subsection \ref{subsec:R1}),
\begin{align*}
	\abs{(\partial_{\meta_0}^{\tilde\mbeta}
	\pa_{\mlambda}^{\tilde{\mkappa}}
	\cL_{j,\epsilon})(\mw,\meta_0,\mlambda)}
	\lesssim 1/\abs{G(\meta_0,\mlambda)}^{2D+\abs{\mkappa}+1}
	\lesssim 2^{\epsilon j\left(D+\tfrac{\abs{\mkappa}+1}{2}\right)}.
\end{align*}

Summarizing, we have demonstrated that
$$
C_{j,\epsilon,\eps}^{\mkappa}
\lesssim 
2^{\epsilon j\left(D+\tfrac{\abs{\mkappa}+1}{2}\right)}.
$$
Now, combining the previous estimate 
with \eqref{eq:int-Y-y-est} 
and \eqref{eq:tildeM-est-tmp} yields
$$
\abs{\widetilde M_{j,\epsilon,\eps}
(\tilde \mx_0,\mlambda;\mz)}\lesssim 
2^{j D/r'}
2^{\epsilon j\left(D+\tfrac{\abs{\mkappa}+1}{2}\right)}
\norm{V}_{L^{r'}(\R^D)},
$$
for all $\tilde \mx_0\in \R^D$, $\mlambda \in \R^m$, 
and $\mz\in \R^{D-1}$. From this and \eqref{m-1}, for any 
$V\in C^\infty_c(\R^D)$ and any $\mz\in \R^{D-1}$, 
we arrive at
$$
\abs{\innb{M_{j,\epsilon,\eps}(\cdot,\mz),V}}
\lesssim
2^{j D/r'}
2^{\epsilon j\left(D+\tfrac{\abs{\mkappa}+1}{2}\right)}
\norm{\gamma}_{\cM(\R^D\times \R^m)}
\norm{V}_{L^{r'}(\R^D)}.
$$
This implies the sought-after claim: 
$M_{j,\epsilon,\eps}(\cdot,\mz) \in L^r(\R^D)$, 
and it is accompanied by the following explicit bound:
\begin{equation}\label{eq:M-measure-bound}
	\sup_{\mz\in \R^{D-1}}
	\norm{M_{j,\epsilon,\eps}(\cdot,\mz)}_{L^r(\R^D)}
	\lesssim 
	2^{j D/r'}
	2^{\epsilon j\left(D+\tfrac{\abs{\mkappa}+1}{2}\right)}
	\norm{\gamma}_{\cM(\R^D\times \R^m)}.
\end{equation}

Based on \eqref{eq:tildeR5-tmp}, $\tilde R_5$ 
can be understood as follows:
\begin{equation*}
	\innb{\tilde R_5,v}=\int_{\R^{D-1}}\int_{\R^D}
	\cA_{\frac{\phi_j}{\abs{\cdot}}}
	\bigl(M_{j,\epsilon,\eps}
	(\cdot,\mz)\bigr)(\mx_0)
	\, \overline{v(\mx_0)}\, d\mx_0\, J(\mz)\, d\mz.
\end{equation*}
The symbol $\mxi_0\mapsto \frac{\phi_j(\abs{\mxi_0})}{\abs{\mxi_0}}$
agrees with the function $\tilde\psi^{(0)}$ given 
at the beginning of Subsection \ref{subsec:R4}. 
Hence, $\norm{\cA_{\phi_j/\abs{\cdot}}}_{L^r\to L^r}\lesssim 2^{-j}$.
By applying this bound along with \eqref{eq:M-measure-bound}, 
we conclude that
\begin{align}
	\abs{\innb{R_5,\bar{v}}} 
	& =\left\langle \frac{(-1)^{\abs{\mkappa}}}{2\pi i}
	\tilde R_5,\bar{v}\right\rangle
	\notag \\ &
	\lesssim\int_{\R^{D-1}}
	2^{j D/r'} 2^{\epsilon j\left(D+\tfrac{\abs{\mkappa}+1}{2}\right)}
	2^{-j}\, J(\mz)\, d\mz
	\norm{\gamma}_{\cM(\R^D\times \R^m)}
	\norm{v}_{L^{r'}(\R^D)}
	\notag \\ & 
	=2^{-j\left(1-\epsilon\left(D+\tfrac{\abs{\mkappa}+1}{2} 
	\right) - \frac{D}{r'}	\right)}
	\norm{\gamma}_{\cM(\R^D\times \R^m)}\norm{v}_{L^{r'}(\R^D)} \implies 
	\notag
	\\ & \norm{R_5}_{L^r(\R^D)} 
	\lesssim 2^{-j\left(1-\epsilon\left(D
	+\tfrac{\abs{\mkappa}+1}{2}
	\right)-\frac{D}{r'}	\right)}
	\norm{\gamma}_{\cM(\R^D\times \R^m)}.
	\label{eq:R5-final}
\end{align}

\subsection{Estimate of $I$}

Next, we direct our focus to the term $I$ on the left-hand 
side of \eqref{eq:init-relation}. 
As previously discussed, to establish 
the theorem, we must verify that \eqref{eq:intro-goal} 
holds (via duality). In this context, the relevant 
quantity is not $I$, but rather
\begin{equation}\label{eq:wtilde-I-def}
	\innb{\widetilde I,\bar{v}}
	:=\int_{\R^D} \cA_{\phi_j}
	\left(\int_{\R^m} \rho(\mlambda)
	\tilde \varphi(\cdot) 
	h(\cdot, \mlambda) 
	\, d\mlambda \right)\!(\mx_0)
	\overline{v(\mx_0)} \, d\mx_0,
\end{equation}
which does not contain the function 
$\chi^{j,\eps}_{\my}(\cdot)$ defined in \eqref{eq:chi-jeps-y-def}. 
Here,  $v = v(\mx_0) \in C^\infty_c(\R^D)$ 
and $\varphi \in C^\infty_c(K)$ are arbitrary functions, with
\begin{equation*}
	\tilde\varphi (\mx_0)
	=\tilde\varphi (x_0,\mx)
	:=\pi^{-1/2}e^{-x_0^2}\varphi(\mx).
\end{equation*}
We will relate $I$ and $\widetilde{I}$ by estimating the 
difference between them. 

By applying the Plancherel theorem, 
\begin{align*}
	\innb{I,\bar{v}}
	& = \int e^{-2\pi i \mxi_0\cdot (x_0, \mx)}
	\, \phi_j(\abs{\mxi_0}) 
	\rho(\mlambda) \, \chi^{j,\eps}_{\my}(x_0,\mx)
	\\ & \quad  \qquad\qquad\qquad \times
	h(x_0,\mx,\mlambda)
	\overline{\widehat{v}(\mxi_0)} \varphi(\my)
	\, dx_0\, d\mx\, d\my\, d\mxi_0\, d\mlambda
	\\ & \overset{\eqref{eq:J-def-new}}{=} 
	\int
	e^{-2\pi i \mxi_0\cdot 
	\bigl(x_0,\my+2^{-\eps j}\mz\bigr)}
	\, \phi_j(\abs{\mxi_0})\rho(\mlambda) 
	\, \pi^{-1/2}e^{-x_0^2}J(\mz)
	\\ & \quad \qquad\qquad\qquad \times
	h\bigl(x_0,\my+2^{-\eps j}\mz,\mlambda\bigr)
	\overline{\widehat{v}(\mxi_0)}
	\varphi(\my)
	\, dx_0\, d\mz\, d\my\, d\mxi_0\, d\mlambda
	\\ & = 
	\int
	e^{-2\pi i \mxi_0\cdot \tilde \my_0}
	\, \phi_j(\abs{\mxi_0})\rho(\mlambda) 
	\, \pi^{-1/2}e^{-\tilde y_0^2}J(\mz) 
	\\ & \quad \qquad\qquad\qquad \times
	h(\tilde \my_0,\mlambda)
	\overline{\widehat{v}(\mxi_0)}
	\varphi\bigl(\tilde \my-2^{-\eps j}\mz\bigr)
	\, d\tilde \my_0\, d\mz\, d\mxi_0\, d\mlambda,
\end{align*}
where, in the second line, we applied 
a change of variables:
$$
\mz=\frac{\mx-\my}{2^{-\eps j}} 
\implies 
d\mz=2^{\eps j(D-1)}\, d\mx,
$$
and in the final step, we made 
another change of variables:
$$
\tilde{\my}_0
=(\tilde{y}_0,\tilde{\my})
=(x_0,\my +2^{-\eps j}\mz) 
\implies dx_0 \, d\my=d\tilde \my_0.
$$
We have suppressed the integration domain. 
For example, in the first line we have
$\mlambda \in L$, 
$\mxi_0\in \R^D$, 
$\my\in K$,  
$\mx_0=(x_0,\mx)\in K_0$.
However, we may freely assume that all integrals are taken
over the entire space, since the compact support of the
relevant functions ultimately localizes the integration
to a compact set where this is justified.

By writing $\tilde \varphi\bigl(\tilde y_0,\tilde \my-2^{-\eps j}\mz\bigr)
=\tilde \varphi(\tilde y_0,\tilde \my)+
\left(\tilde \varphi\bigl(\tilde y_0,\tilde \my-2^{-\eps j}\mz\bigr)
-\tilde \varphi(\tilde y_0,\tilde \my)\right)$ 
and again applying the Plancherel theorem, we obtain
\begin{align*}
	\innb{I,\bar{v}}= \int_{\R^D} \cA_{\phi_j}
	\left(\int_{\R^m} \rho(\mlambda)
	\tilde\varphi(\cdot) 
	h(\cdot, \mlambda) 
	\, d\mlambda \right)\!(\mx_0) 
	\overline{v(\mx_0)}\, d\mx_0+E
	=\innb{\widetilde{I},\bar{v}}+E,
\end{align*}
where we used that $\int J(\mz)\, d\mz = 1$ 
and $\widetilde{I}$ is defined in \eqref{eq:wtilde-I-def}. 

The error term $E$ takes the form
\begin{align*}
	&E =\int_{\R^D} \cA_{\phi_j}
	\left(\int_{\R^m} \rho(\mlambda)
	\, \Delta^{\tilde \varphi}_{j,\eps}(\cdot) 
	\, h(\cdot, \mlambda) 
	\, d\mlambda \right)\!(\mx_0) 
	\overline{v(\mx_0)}\, d \mx_0,
	\\ & \Delta^{\tilde \varphi}_{j,\eps}(\mx_0)
	:=\int_{\R^{D-1}} \Bigl(
	\tilde \varphi\bigl( x_0, \mx-2^{-\eps j}\mz\bigr)
	-\tilde \varphi( x_0, \mx)\Bigr)J(\mz)\, d\mz.
\end{align*}
By applying H\"older's inequality and 
using the uniform $L^r$-continuity of $\cA_{\phi_j}$ (see 
Section \ref{sec:prelim}), we obtain
\begin{align*}
	\abs{E} & \lesssim_{\rho} 
	\sup_{\mx_0\in K_0}
	\abs{\Delta^{\tilde \varphi}_{j,\epsilon}(\mx_0)}
	\norm{h}_{L^r(K_0\times L)} 
	\norm{v}_{L^{r'}(\R^D)} 
	\\ & \leq \sup_{\mx_0\in K_0} \int_{\R^{D-1}} 
	\abs{\varphi(x_0,\mx-2^{-\eps j}\mz) - \varphi(x_0,\mx)}
	J(\mz) \, d\mz \ 
	\norm{h}_{L^r(K_0\times L)} 
	\norm{v}_{L^{r'}(\R^D)} 
	\\ & \lesssim_\varphi 2^{-\eps j}
	\int_{\R^{D-1}} \abs{\mz} J(\mz) \,d\mz \
	\norm{h}_{L^r(K_0\times L)} 
	\norm{v}_{L^{r'}(\R^D)}
	\\ &
	\overset{\substack{\eqref{eq:J-def-new}
	\\ \eqref{eq:gaussian-q-moment}}}{\lesssim} 
	2^{-\eps j} 
	\norm{h}_{L^r(K_0\times L)} 
	\norm{v}_{L^{r'}(\R^D)}.
\end{align*}

In summary,
\begin{equation}\label{eq:wtilde-est-tmp}
	\abs{\innb{\widetilde{I},\bar{v}}} 
	\lesssim \abs{\innb{I,\bar{v}}}
	+2^{-\eps j}
	\norm{h}_{L^r(K_0\times L)}
	\norm{v}_{L^{r'}(\R^D)},
\end{equation}
where $\widetilde{I}$, as defined 
in \eqref{eq:wtilde-I-def}, is the 
object we aim to estimate. 
The term $I$ is given by \eqref{eq:init-relation} 
and includes the terms $R_1, \ldots, R_5$. 
We conclude from \eqref{eq:wtilde-est-tmp} that
\begin{equation}\label{eq:wtilde-est-new}
	\begin{split}
		\norm{\widetilde{I}}_{L^r(\R^D)} 
		& \lesssim \norm{I}_{L^r(\R^D)}+2^{-\eps j}
		\norm{h}_{L^r(K_0\times L)} 
		\\ & 
		\lesssim 
		\sum_{i=1}^5\norm{R_i}_{L^r(\R^D)}
		+2^{-\eps j}\norm{h}_{L^r(K_0\times L)}.
	\end{split}
\end{equation}
where the terms $R_1, \ldots, R_5$ have 
previously been estimated by sums of 
terms of the form $2^{-j \left(\cdots\right)}$. 

\subsection{Concluding the proof}\label{subsec:final}

It follows from \eqref{eq:wtilde-est-new} and our estimate of $I$ 
that \eqref{eq:intro-goal} holds for some $\beta_0=\beta_0(r)>0$,
provided all terms of the form $2^{-j \left(\dots\right)}$ have 
negative exponents, as will be demonstrated below.
More precisely, by recalling \eqref{eq:wtilde-I-def} and the cut-off 
procedure from Subsection \ref{subsec:prep}, in \eqref{eq:intro-goal}
in place of $h$ we in fact have the weighted 
function $\pi^{-1/2} e^{-x_0^2} \varphi(\mx) \chi(x_0,\mx) 
h(x_0, \mx, \mlambda)$, for arbitrary
$\chi \in C_c^\infty(\R^D)$ and $\varphi\in C^\infty_c(K)$, 
where $K$ is a bounded open set containing the projection of 
the support of $\chi$ to the last $D-1$ variables.
By the arbitrariness of the test functions $\chi,\varphi$ 
and the discussion in Section \ref{sec:prelim}, 
we conclude that for any 
$\beta\in (0,\beta_0)$, the map
$$
\mx_0 = (x_0, \mx) \mapsto 
\int_{\R^m}  \rho(\mlambda)
h(x_0, \mx, \mlambda)
\,d\mlambda
= \innb{h(\mx_0,\cdot), \rho}
$$
belongs to the Besov space $B^{\beta}_{r,1, \loc}(\R^D)$ 
(see Section \ref{sec:prelim}). 
In summary, for any test function 
$\chi \in C^\infty_c(\R^D)$, we obtain
$$
\chi \innb{h, \rho} \in B^\beta_{r,1}(\R^D),
$$
which, by virtue of \eqref{eq:besov_emb}, 
establishes the $W^{\beta, r}_{\loc}(\R^D)$ 
regularity of $\innb{h,\rho}$.

\medskip

It remains to ensure that all exponents 
in terms of the form $2^{-j(\cdots)}$ are negative 
and to specify the admissible range of 
the integrability exponent $r>1$ and the upper 
bound $\beta_0$ of the regularity parameter. 
We do not compute an explicit value of $\beta_0$ 
in all cases: $\beta_0$ will be 
determined explicitly for $p\ge 2$, 
whereas for $p<2$ only an explicit 
lower bound for $\beta_0$ is provided.

Recall that these terms originate 
from \eqref{new-eq-31} and \eqref{comm0term} ($R_1$), 
\eqref{new-eq-32} ($R_2$), \eqref{new-eq-33} ($R_3$), 
\eqref{new-eq-34} ($R_4$), \eqref{eq:R5-final} ($R_5$), 
and \eqref{eq:wtilde-est-tmp}. 
The exponents depend on four strictly 
positive adjustable parameters: $\epsilon$, $\eps$, 
$\sigma$, and $\zeta$, and an additional one 
dictating the integrability 
$r\in \bigl(1,\frac{p\bar{p}}{p+\bar{p}}\bigr)$ 
(recall \eqref{eq:r-p-ass} and the discussion 
given in Subsection \ref{subsec:R3}). 
To ensure that the exponents are negative, 
these parameters must be selected such that 
each of the following expressions 
is strictly positive. The maximal 
regularity is then obtained as the maximum of 
the minimums of the eight functions below,
yielding a  max-min problem: 
\begin{equation}\label{eq:param-ineq}
  \begin{cases}
    \text{\eqref{new-eq-31}:}\quad
    \epsilon\frac{\alpha}{2}
      -\zeta\frac{\alpha}{2}
      -\sigma\Bigl(1-\frac{p}{2}\Bigr),
    \\[0.15cm]
    \text{\eqref{new-eq-31}:}\quad
    \zeta-\sigma\Bigl(1-\frac{p}{2}\Bigr),
    \\[0.15cm]
    \text{\eqref{comm0term}:}\quad
    1-\varepsilon\frac{D+1}{2}
      -\frac{\epsilon}{2}
      -\sigma\Bigl(1-\frac{p}{2}\Bigr),
    \\[0.15cm]
    \text{\eqref{new-eq-32}:}\quad
    \sigma\Bigl(\frac{p}{r}-1\Bigr)
    -\varepsilon\,\frac{D-1}{r'},
    \\[0.15cm]
    \text{\eqref{new-eq-33}:}\quad
    \varepsilon\Bigl(\bar{s}-\frac{D-1}{r'}\Bigr)
    -\frac{\epsilon}{2},
    \\[0.15cm]
    \text{\eqref{new-eq-34}:}\quad
    1-\frac{\epsilon}{2}-\varepsilon\Bigl(\frac{D}{r'}
    +\frac{1}{r}\Bigr),
    \\[0.15cm]
    \text{\eqref{eq:R5-final}:}\quad
    1-\epsilon\Bigl(D+\tfrac{|\kappa|+1}{2}\Bigr)-\frac{D}{r'},
    \\[0.15cm]
    \text{\eqref{eq:wtilde-est-tmp}:}\quad
    \varepsilon,
  \end{cases}
\end{equation}
where we recall that $\frac{1}{r}+\frac{1}{r'} = 1$.
Here, $D \geq 2$ represents the total dimension 
(combining time and space). 
The numbers $p>1$, $\bar{s}\in (0,1]$, and $\alpha > 0$ 
are fixed parameters, as well as the multi-index $\mkappa$. 
These parameters are defined in ({\bf H1}), ({\bf H2}),
\eqref{eq-1}, and \eqref{non-deg}.
As a reminder, we are currently working under the 
assumption that $p < 2$ (see \eqref{eq:r-p-ass_2}). 
Furthermore, once the parameters are chosen, the (regularity) 
exponent $\beta_0$ is equal to the smallest 
value among all expressions 
appearing in \eqref{eq:param-ineq}. 
Consequently, the objective 
is to maximize this smallest value.

For ease of reference, we denote the eight lines 
in \eqref{eq:param-ineq} as $\eqref{eq:param-ineq}_1$ 
through $\eqref{eq:param-ineq}_8$.
The max-min optimization problem \eqref{eq:param-ineq} 
can then be expressed as follows: 
given $r > 1$, solve
\begin{equation}\label{eq:param-maxmin}
	\max_{\epsilon,\zeta,\sigma,\eps>0}\,
	\min_{i\in\{1,\dots,8\}}
	\bigl\{\eqref{eq:param-ineq}_i\bigr\} .
\end{equation}

Before proceeding, let us briefly note the 
simplifications that occur when $p\ge 2$. In this case, 
$h\in L^2_{\loc}(\R^{D+m})$, so the truncation functions 
$T_{l_j}$ and $T^{l_j}$ (see \eqref{eq:Tl-def}) are no 
longer needed. Consequently, in $R_1$ we simply have $h$ 
in place of $T_{l_j}(h)$, while $R_2$ vanishes entirely. 
As a result, the fourth line in \eqref{eq:param-ineq} 
is removed, and we set $\sigma=0$, with all other 
terms unchanged. Thus, for $p\ge 2$, 
\eqref{eq:param-maxmin} reduces to
\begin{equation}\label{eq:param-maxmin-L2}
	\max_{\substack{\epsilon,\zeta,\eps>0\\ \sigma=0}}
	\, \min_{i\in\{1,\dots,8\}\setminus\{4\}}
	\bigl\{\eqref{eq:param-ineq}_i\bigr\}.
\end{equation}

We collect the properties of \eqref{eq:param-maxmin} 
and \eqref{eq:param-maxmin-L2} in the following lemma. 

\begin{lemma}\label{lem:param}
Let $p>1$, $\bar{s}\in (0,1]$, 
$\alpha>0$, $\mkappa\in\N_0^m$,
and $D\geq 2$ be given. 
Define the number $r_1$ by
\begin{equation}\label{eq:r1-def}
	r_1:=\min\left\{\frac{D}{D-1},
	\frac{D-1}{D-1-\bar{s}},p\right\} .
\end{equation}

\noindent \textbf{Case $p\geq 2$.}  For any $r\in (1,r_1)$ 
a solution to \eqref{eq:param-maxmin-L2} exists,
denoted by $\beta_1(r)$. The function $\beta_1$ 
is strictly positive and decreasing on $(1,r_1)$, 
and it is given by:
\begin{equation*}
	\beta_1(r) = \frac{\alpha}{2+\alpha} \min\left\{
	\frac{\frac{D}{r}+1-D}{\frac{\alpha}{2+\alpha} 
	+ D + \frac{\abs{\mkappa}+1}{2}},
	\frac{2(2+\alpha)g(r)}{(2+3\alpha)(g(r)
	+\frac{D+1}{2})}\right\} ,
\end{equation*}
where $g(r)=\frac{D-1}{r}+\bar{s}-(D-1)$.

\medskip

\noindent \textbf{Case $p< 2$.} For any $r\in (1,r_1)$ 
a solution to \eqref{eq:param-maxmin} exists,
denoted by $\beta_2(r)$. The function 
$\beta_2$ is decreasing on $(1,r_1)$. 
Furthermore, $r_2$ given by
\begin{equation*}
	r_2 := \sup\left\{ r\in (1,r_1) :
	\frac{4\alpha(\bar{s}
	-\frac{D-1}{r'})(\frac{p}{r}-1)}
	{(2+\alpha)(2-p)} > \frac{D-1}{r'}
	\right\}
\end{equation*}
is well defined and satisfies 
$1<r_2\leq r_1$, where $1/r+1/r'=1$.
The function $\tilde\beta_2$, given by 
$$
\tilde\beta_2(r) 
=\frac{(1-\frac{p}{2})(1-\frac{D}{r'})
\left(\frac{2}{2-p}A_1(r)
-\frac{D-1}{r'}\right)}
{\bigl(A_1(r) + A_2(r) + A_3(r)\bigr)C(r)},
$$
where 
\begin{align*}
	& A_1(r)= \frac{2\alpha}{2+\alpha}
	\left(\bar{s}-\frac{D-1}{r'}\right)
	\left(\frac{p}{r}-1\right),
	\\ &
	A_2(r)=\left(1-\frac{p}{2}\right)
	\frac{D-1}{r'}(2D+\abs{\mkappa}),
	\\ &
	A_3(r)= 2\left(\bar{s}-\frac{D-1}{r'}\right)
	\left(D+\frac{\abs{\mkappa}+1}{2}\right)
	\left(\frac{p}{r}-\frac{p}{2}\right) ,
	\\ & 
	C(r)=\max\left\{1,
	\,\frac{3+\frac{3}{2(\bar{s}-\frac{D-1}{r'})}
	\left(D+1+\frac{(2-p)(D-1)(r-1)}{p-r}\right)
	+\frac{r(2-p)}{p-r}}{2D+\abs{\mkappa}+1}\right\},
\end{align*}
then obeys the relation
\begin{equation}\label{eq:beta2>tildebeta2}
	\beta_2(r) \geq \tilde{\beta}_2(r)>0, 
	\quad r\in (1,r_2).
\end{equation}
\end{lemma}

Before turning to the proof of Lemma~\ref{lem:param}, 
let us first complete the proof of Theorem \ref{thm-1}, 
that is, determine the integrability parameter $r_0$ 
and the regularity mapping $\beta:(1,r_0)\to\R^+$.
In the preceding discussion, we reduced the 
existence of $\beta_0(r)$ to solving the 
max-min problems \eqref{eq:param-maxmin} 
and \eqref{eq:param-maxmin-L2}. Therefore, it remains only 
to state the explicit values of $r_0$ and $\beta_0$ 
provided by the lemma. We define 
$r_0$ and $\beta_0(r)$ as follows:
$$
r_0=\min\left\{r_i,
\frac{p\bar{p}}{p+\bar{p}}\right\},
\qquad
\beta_0(r)=\beta_i(r), 
\quad r\in (1,r_0),
$$
where $i=1$ if $p\geq 2$ and $i=2$ otherwise.
The values $r_i$ and $\beta_i$ are provided 
in Lemma \ref{lem:param}.

\begin{proof}[Proof of Lemma \ref{lem:param}]
Since all expressions in \eqref{eq:param-ineq} 
are continuous (indeed, affine) in the 
parameters $\epsilon$, $\zeta$, $\eps$, 
and $\sigma$, their minimum is also continuous. 
Moreover, without loss of generality, the maxima 
in \eqref{eq:param-maxmin} and \eqref{eq:param-maxmin-L2} 
can be taken over a compact set. 
In fact, the condition $\eqref{eq:param-ineq}_3\ge 0$ 
provides upper bounds for $\eps$, $\epsilon$, 
and $\sigma$, which, together with 
$\eqref{eq:param-ineq}_1\ge 0$, yield the same for $\zeta$. 
Hence, the solutions $\beta_1(r)$ and $\beta_2(r)$ 
to \eqref{eq:param-maxmin-L2} 
and \eqref{eq:param-maxmin}, 
respectively, exist.

The variable $r$ appears in the expressions 
$\eqref{eq:param-ineq}_i$ for $i=4,5,6,7$.
Since all four terms are clearly 
decreasing in $r$, we conclude that 
both $\beta_1$ and $\beta_2$ 
are also decreasing in $r$. 

It is useful to note that a 
necessary condition for $\eqref{eq:param-ineq}_4>0$, 
$\eqref{eq:param-ineq}_5>0$ and 
$\eqref{eq:param-ineq}_7>0$ is 
$r'>\max\{\frac{D-1}{\bar{s}},D\}$ and $r<p$. 
In terms of $r$ this reads as $r<r_1$, 
where $r_1$ is given in \eqref{eq:r1-def}. 
Note that since $D\geq 2$, we have $r_1\leq 2$.

Before splitting the analysis into 
two parts based on the value of $p$, 
let us simplify \eqref{eq:param-maxmin} 
and \eqref{eq:param-maxmin-L2}.
Since $\eqref{eq:param-ineq}_8$ is greater than
$\eqref{eq:param-ineq}_5$ (recall 
that $\bar{s}\leq 1$ and $D\geq 2$),
and $\eqref{eq:param-ineq}_6$ is greater 
than $\eqref{eq:param-ineq}_3$ (due to $r<r_1\leq 2$), 
we can remove indices $i=6$ and $i=8$ 
from the minimum in both \eqref{eq:param-maxmin} 
and \eqref{eq:param-maxmin-L2}.
Moreover, $\zeta$ appears only 
in $\eqref{eq:param-ineq}_1$ and
$\eqref{eq:param-ineq}_2$, and the 
two terms involving $\zeta$ have 
opposite signs. Thus, the optimal choice is to 
select its value such that the 
two expressions are equilibrated, i.e., 
$\eqref{eq:param-ineq}_1=\eqref{eq:param-ineq}_2$.
Then we get 
\begin{equation}\label{eq:zeta}
	\zeta=\frac{\alpha}{2+\alpha}\epsilon.
\end{equation}
Therefore, the index $i=2$ 
can also be eliminated from the
minimum in \eqref{eq:param-maxmin} 
and \eqref{eq:param-maxmin-L2}, while in the maximum 
we prescribe \eqref{eq:zeta}. 
This results in 
\begin{equation}\label{eq:param-ineq-l1}
	\text{$\eqref{eq:param-ineq}_1$}
	=\frac{\alpha}{2+\alpha}\epsilon
	-\sigma\Bigl(1-\frac{p}{2}\Bigr).
\end{equation}

\smallskip \noindent \textbf{Case $p\geq 2$.} Based on the 
previous observations, the max-min problem
\eqref{eq:param-maxmin-L2} is reduced to 
\begin{equation*}
	\max_{\substack{\epsilon,\eps>0\\ 
	\sigma=0, \eqref{eq:zeta}}}
	\,\min \Bigl\{\eqref{eq:param-ineq}_1,
	\eqref{eq:param-ineq}_3, \eqref{eq:param-ineq}_5, 
	\eqref{eq:param-ineq}_7\Bigr\}.
\end{equation*}
The parameter $\eps$ appears only in 
$\eqref{eq:param-ineq}_3$ and 
$\eqref{eq:param-ineq}_5$,
and the two terms involving $\eps$ enter with opposite signs.
Thus, the optimal value is chosen by equating 
the two expressions, yielding
\begin{equation}\label{eq:eps-L2}
	\eps = \frac{1}{\bar{s}+\frac{D+1}{2}
	-\frac{D-1}{r'}}.
\end{equation}
Since $r\in (1,r_1)$, we have 
$\frac{2}{2\bar{s}+D+1}<\eps<\frac{2}{D+1}$, and 
\begin{equation}\label{eq:param-ineq-L2-l5}
	\eqref{eq:param-ineq}_5
	=\frac{\bar{s}-\frac{D-1}{r'}}{\bar{s}
	-\frac{D-1}{r'}+ \frac{D+1}{2}}
	-\frac{\epsilon}{2}.
\end{equation}

We are left with the expressions 
$\eqref{eq:param-ineq}_i$ for $i=1,5,7$ (with $\sigma=0$), 
together with \eqref{eq:zeta} and \eqref{eq:eps-L2}, 
and only one unknown parameter $\epsilon$. 
The coefficient of $\epsilon$ is positive in 
$\eqref{eq:param-ineq}_1$ (see \eqref{eq:param-ineq-l1}), 
but negative in both $\eqref{eq:param-ineq}_5$
(see \eqref{eq:param-ineq-L2-l5}) 
and $\eqref{eq:param-ineq}_7$. 
Hence, the optimal value of $\eps$ is attained at the 
minimum of the solutions to the equations 
$\eqref{eq:param-ineq}_1=\eqref{eq:param-ineq}_5$ 
and $\eqref{eq:param-ineq}_1=\eqref{eq:param-ineq}_7$. 
Substituting this value into $\eqref{eq:param-ineq}_1$, 
we obtain finally
\begin{align*}
	\beta_1(r) &= \frac{\alpha}{2+\alpha} \min\left\{
	\frac{1-\frac{D}{r'}}{\frac{\alpha}{2+\alpha}
	+D+\frac{\abs{\mkappa}+1}{2}},
	\frac{2(2+\alpha)}{2+3\alpha}
	\frac{\bar{s}-\frac{D-1}{r'}}{\bar{s}-\frac{D-1}{r'}
	+ \frac{D+1}{2}}\right\} 
	\\ &
	= \frac{\alpha}{2+\alpha}
	\min\left\{\frac{\frac{D}{r}+1-D}{\frac{\alpha}{2+\alpha}
	+D + \frac{\abs{\mkappa}+1}{2}},\frac{2(2+\alpha)}{2+3\alpha}
	\frac{\frac{D-1}{r}+\bar{s}-(D-1)}{\frac{D-1}{r}
	+\bar{s}-(D-1)+ \frac{D+1}{2}}\right\}.
\end{align*}

\smallskip\noindent \textbf{Case $p<2$.} Let us recall that 
\begin{equation*}
	\beta_2(r)
	=\max_{\substack{\epsilon,\eps,\sigma>0
	\\ \eqref{eq:zeta}}}\,
	\min \Bigl\{\eqref{eq:param-ineq}_1, 
	\eqref{eq:param-ineq}_3, 
	\eqref{eq:param-ineq}_4, 
	\eqref{eq:param-ineq}_5, 
	\eqref{eq:param-ineq}_7\Bigr\}.
\end{equation*}
Deriving an explicit formula for $\beta_2(r)$ 
appears rather involved. We will therefore 
construct a lower bound $\tilde\beta_2(r)$ 
and analyze its sign instead. 
The idea is to balance the simplest expressions 
in \eqref{eq:param-ineq}, namely 
$\eqref{eq:param-ineq}_{4,5,7}$, and equate them 
with $\eqref{eq:param-ineq}_{1}$, while treating 
the remaining term separately. 

Let us take $r\in (1,r_1)$ and $\beta\in\R$,
and consider the following equalities:
\begin{equation*}
\text{\eqref{eq:param-ineq}$_4$}=\beta \;, \quad
	\text{\eqref{eq:param-ineq}$_5$}=\beta \;, \quad
	\text{\eqref{eq:param-ineq}$_7$}=\beta .
\end{equation*} 
By solving these equations we get
\begin{equation}\label{eq:epsilon_eps_sigma}
	\epsilon=\frac{1-\frac{D}{r'}
	-\beta}{D+\frac{\abs{\mkappa}+1}{2}}, 
	\quad
	\eps = \frac{\beta +\frac{\epsilon}{2}}
	{\bar{s}-\frac{D-1}{r'}},
	 \quad
	\sigma=\frac{\beta+\varepsilon
	\,\frac{D-1}{r'}}{\frac{p}{r}-1}.
\end{equation}
To ensure that all parameters, $\epsilon$, $\eps$ 
and $\sigma$, are positive, it suffices to have 
\begin{equation}\label{eq:beta_constr}
	0<\beta < 1 -\frac{D}{r'},
\end{equation}
recalling that $r$ is taken form the 
interval $(1,r_1)$.

We now impose the additional condition 
$\eqref{eq:param-ineq}_1=\beta$. 
Combined with \eqref{eq:epsilon_eps_sigma}, 
this yields an explicit formula for $\beta$:
\begin{equation}\label{eq:param-beta}
	\beta=\frac{(1-\frac{p}{2})
	(1-\frac{D}{r'}) w(r)}
	{A_1 + A_2 + A_3} ,
\end{equation}
where 
\begin{align*}
	&w(r)= \frac{4\alpha}{(2-p)(2+\alpha)}
	\left(\bar{s}-\frac{D-1}{r'}\right)
	\left(\frac{p}{r}-1\right)-\frac{D-1}{r'},
	\\ & 
	A_1= \frac{2\alpha}{2+\alpha}\left(\bar{s}
	-\frac{D-1}{r'}\right)\left(\frac{p}{r}-1\right),
	\\ &
	A_2=\left(1-\frac{p}{2}\right)
	\frac{D-1}{r'}(2D+\abs{\mkappa}),
	\\ & A_3=2\left(\bar{s}-\frac{D-1}{r'}\right)
	\left(D+\frac{\abs{\mkappa}+1}{2}\right)
	\left(\frac{p}{r}-\frac{p}{2}\right).
\end{align*}

It remains to verify that the 
conditions \eqref{eq:beta_constr} are satisfied. 
Since $A_1,A_2, A_3>0$, the quantity 
$\beta$ is positive whenever $w(r)$ is positive. 
Observe that $w$ is continuous 
and strictly decreasing on $[1,r_1)$, with
$w(1)=\frac{4\alpha\bar{s}(p-1)}{(2-p)(2+\alpha)}>0$. 
Hence,
\begin{equation}\label{eq:r2}
	r_2:=\sup \bigl\{r\in (1,r_1):w(r)>0\bigr\}
\end{equation}
is well-defined, and for $r\in(1,r_2)$ 
we have $\beta>0$. 
For the upper bound 
in \eqref{eq:beta_constr}, we have
\begin{equation*}
	\beta<\frac{(1-\frac{p}{2})
	\left(1-\frac{D}{r'}\right)
	\frac{2}{2-p} A_1}{A_1} 
	= 1-\frac{D}{r'}.
\end{equation*}

Therefore, for any $r\in(1,r_2)$, with $r_2$ 
defined in \eqref{eq:r2}, and for $\epsilon$, $\eps$, 
and $\sigma$ given by \eqref{eq:epsilon_eps_sigma}, 
where $\beta$ is defined in \eqref{eq:param-beta}, 
we have $\epsilon,\eps,\sigma>0$ and
$$
\eqref{eq:param-ineq}_1
=\eqref{eq:param-ineq}_4
=\eqref{eq:param-ineq}_5
=\eqref{eq:param-ineq}_7
=\beta>0.
$$

It remains to examine $\eqref{eq:param-ineq}_3$. 
In general, however, the inequality 
$\eqref{eq:param-ineq}_3\ge \beta$ does not hold, 
and it may even occur that 
$\eqref{eq:param-ineq}_3<0$. 
To avoid such a scenario, we 
consider the rescaled parameters.
$$
(\tilde\epsilon,
\tilde\varepsilon,
\tilde\sigma)
:=\frac{1}{C} (\epsilon,\varepsilon,\sigma),
$$
where $C\geq 1$ is to be determined. 
Note first that for these new parameters (which, of course, 
remain positive and thus admissible), we have
$$
\eqref{eq:param-ineq}_1
=\eqref{eq:param-ineq}_4
=\eqref{eq:param-ineq}_5
=\frac{\beta}{C}
\qquad
\hbox{and} 
\qquad 
\eqref{eq:param-ineq}_7
\geq \frac{\beta}{C}.
$$
Then, it is left to choose $C \geq 1$ 
such that $\eqref{eq:param-ineq}_3 
\geq \frac{\beta}{C}$, which is achieved if 
\begin{equation}\label{eq:C-ineq}
	C\geq \beta+\eps\frac{D+1}{2}
	+\frac{\epsilon}{2}
	+\sigma\left(1-\frac{p}{2}\right).
\end{equation}

To demonstrate the possibility of 
such a choice \eqref{eq:C-ineq}, 
we will determine upper 
bounds for $\beta$, $\epsilon$, 
$\eps$, and $\sigma$.  For $\beta$ we have
$$
\beta \leq \frac{(1-\frac{p}{2})\left(1-\frac{D}{r'}\right)
\frac{2}{2-p} A_1}{A_1 + A_3}
\leq \frac{1}{1+\frac{A_3}{A_1}}
<\frac{1}{1+\frac{2+\alpha}
{\alpha}\bigl(D+\frac{\abs{\mkappa}+1}{2}\bigr)}
<\frac{1}{D+\frac{\abs{\mkappa}+1}{2}} ,
$$
since $\frac{A_3}{A_1}
>\frac{2+\alpha}{\alpha}\bigl(D
+\frac{\abs{\mkappa}+1}{2}\bigr)$.
Using this bound and the fact that $\beta > 0$, we 
can easily derive the 
following upper bounds:
\begin{align*}
	& \frac{\epsilon}{2}
	\leq \frac{1}{2D+\abs{\mkappa}+1},
	\\ & 
	\eps\frac{D+1}{2}
	\leq \frac{3(D+1)}{(4D+2\abs{\mkappa}+2)
	\left(\bar{s}-\frac{D-1}{r'}\right)} , 
	\\ & 
	\sigma\left(1-\frac{p}{2}\right)
	\leq \frac{r(2-p)}{(p-r)(2D+\abs{\mkappa}+1)}
	+\frac{3(2-p)(r-1)(D-1)}{2(p-r)
	\left(\bar{s}-\frac{D-1}{r'}\right)
	(2D+\abs{\mkappa}+1)}.
\end{align*}
Combining these bounds, we deduce that 
\begin{align}\label{eq:C-final}
	C:=\max\left\{1, 
	\,\frac{3+\frac{3}{2(\bar{s}-\frac{D-1}{r'})}
	\left(D+1+\frac{(2-p)(D-1)(r-1)}{p-r}\right)
	+\frac{r(2-p)}{p-r}}{2D+\abs{\mkappa}+1}\right\}
\end{align}
satisfies \eqref{eq:C-ineq}. 

Therefore, as promised, we conclude that for 
every $r \in (1, r_2)$, the following 
inequality holds:
\begin{equation*}
	\beta_2(r)\geq \frac{\beta}{C} >0,
\end{equation*}
where $\beta$ and $C$ are given by 
\eqref{eq:param-beta} and \eqref{eq:C-final}. 
This proves \eqref{eq:beta2>tildebeta2}.
\end{proof}

\begin{remark}\label{rem:reg-exp}
We do not claim that the resulting regularity 
$\beta\in(0,\beta_0)$ is optimal---indeed, it is clearly 
suboptimal compared to the homogeneous case 
(see, e.g., \cite{DiPerna-etal:91,Lions:1994qy,Tadmor:2006vn}). 
Nevertheless, to the best of our knowledge, this is the 
first result treating the heterogeneous setting at this 
level of generality. Our exponent is dimension-dependent 
and substantially lower than those obtained in 
the cited homogeneous works. A similar observation 
appears when comparing with \cite{Gerard1990,Gerard:1992aa}; 
we refer to the Section \ref{sec:intro} 
for a discussion of the differences in 
assumptions.

For instance, in the simpler scenario with parameters 
$D=2$, $\alpha=1/2$, $\abs{\mkappa}=1$, $p=2$, $\bar{s}=1$, 
and $\bar{p}=\infty$
(for the admissibility of these values of $\bar{s}$
and $\bar{p}$, see Remark \ref{rem:H2}),  we 
have $r_0=\tfrac{D}{D-1}=2$.
Then $r=1.8<2$ is admissible 
and $\beta_0(1.8)=\beta_1(1.8)\approx 0.00694$,
implying 
$\innb{h,\rho}\in W^{\beta, r}_{\loc}$ for any $\beta<0.00694$.
In contrast, \cite[Theorem A]{Lions:1994qy} provides, 
under analogous conditions, $r_0=1.8$ and 
$\beta_0\approx 0.1$.  
If we change $\bar{s}$ to $\bar{s}=0.1$, then 
$r_0=\tfrac{D-1}{D-1-\bar{s}}= \tfrac{10}{9}$.
For $r=\tfrac{15}{14}<r_0$ ($r'=15$) we get
$\beta_0(\tfrac{15}{14})\approx 0.00621$.

Since our primary goal was to establish some regularity in 
the open case involving heterogeneous coefficients, 
we have not explored whether the approach could yield better 
exponents. Investigating improved exponents, as 
well as extensions to more general equations including 
second-order operators and stochastic perturbations, 
is left for future research.
\end{remark}

\begin{remark}\label{rem:t-dep}
For simplicity of presentation, we 
have considered equations with $t$-independent 
drift $\mff=\mff(\mx,\mlambda)$, although 
this assumption is not essential. 
Relaxing this assumption opens 
the possibility of using the kinetic 
formulation of isentropic gas dynamics 
\cite{Lions:1994aa} to extract regularity 
from velocity averages in certain 
situations (see Subsection \ref{subsec:isentropic}).
\end{remark}

\section{Applications}\label{sec:appl}

In this section, we present two applications 
of Theorem \ref{thm-1} that highlight its impact. 
The first provides a new regularization 
result for scalar conservation laws with discontinuous 
flux in several spatial dimensions. This result also covers 
the heterogeneous case with smooth $x$-dependence, 
in the setting studied in \cite{Dalibard:2006ab}, 
where no such result was previously available. 
The second application establishes a partial 
quantitative regularization effect for the system 
of isentropic gas dynamics. To the best of our knowledge, 
this is also new.
 
\subsection{Heterogenous conservation laws}

We will apply Theorem \ref{thm-1} to quantify 
the Sobolev regularity of solutions 
to scalar conservation laws with 
a nonlinear discontinuous flux $f(\mx,u)$ with bounded 
variation in $\mx$ and general 
initial data $u_0\in L^\infty$.  
A substantial body of literature addresses 
the regularizing effect in homogeneous conservation 
laws; an exhaustive review is beyond our scope. 
In addition to the references cited in the introduction 
(notably \cite{Lions:1994qy}), we refer the reader to 
\cite{Gess:2019ab,Golse:2013aa,Jabin:2010aa,
	Jabin:2002aa,Lellis:2003aa,Tadmor:2006vn}, 
the books \cite{Dafermos:2005mz,Perthame:2002qy}, and 
discussions therein. However, these works all 
consider flux functions independent 
of the spatial variable $\mx$.

In recent years, many researchers have dedicated 
significant effort to analyzing conservation laws with 
$\mx$-dependent flux functions. This effort has been partly 
motivated by numerous applications, often involving discontinuous 
$\mx$-dependence (see, e.g., \cite{Andreianov:2010fk} 
and references therein). 
Much of this work concentrates on well-posedness 
questions and on the convergence analysis 
of numerical schemes. However, the problem of deriving 
regularity for solutions originating from general initial 
data $u_0 \in L^\infty$ (and a nonlinear flux) remained open.  
We are only aware of the recent work \cite{Karlsen:2023ae}, which 
addressed a specific one-dimensional conservation law with 
nonlinear flux $f(k(x),u)$, where $k$ is a spatial coefficient 
and $u_0\in L^\infty$. The authors constructed solutions 
that belong to a Besov space.

Our aim is to establish a regularity result for
the so-called quasi-solutions of multidimensional 
scalar conservation laws with discontinuous flux.
We introduce the following definition.

\begin{definition}\label{def-qs}
We say that a function $u\in L^\infty(\R^+\times\R^d)$ 
is a quasi-solution to the scalar conservation law
\begin{equation}\label{scl-heter}
	\pa_t u(t,\mx) + \Div_\mx f(\mx,u(t,\mx)) = 0,
\end{equation}
if there exists a locally bounded Radon measure 
$\gamma\in \cM_{\loc}(\R^+\times\R^{d+1})$ 
such that, for almost every $\lambda\in\R$, 
the equation
\begin{equation}\label{quasi}
	\pa_t \abs{u-\lambda}
	+\Div_\mx \Bigl(\sign(u-\lambda)
	\bigl(f(\mx,u)-f(\mx,\lambda)\bigr)\Bigr)
	=\gamma
\end{equation}
holds in $\cDp(\R^+\times\R^d)$.
\end{definition}

We consider flux functions 
$f:\R^d\times\R\to\R^d$ 
in \eqref{scl-heter} satisfying
\begin{equation}\label{assum:mff}
	\begin{aligned}
		&\text{(for a.e.~$\mx$)}\quad 
		\lambda\mapsto f(\mx,\lambda)\in C^2(\R;\R^d),
		\\ & 
		\text{($\forall\lambda\in\R$)}\quad 
		\mx\mapsto f(\mx,\lambda)\ \text{and}\
		\mx\mapsto \pa_\lambda f(\mx,\lambda) 
		\ \text{belong to } BV(\R^d),
		\\ &\pa_\lambda f \in 
		L^\infty_{\loc}(\R^d\times\R),
		\\ & 
		\text{($\forall$ compact $K\subset\R^d$, 
		$L\subset\R$)}\quad 
		\sup_{\lambda\in L}
		\abs{D\pa_\lambda f(\cdot,\lambda)}(K)<\infty.
	\end{aligned}
\end{equation}
Here $\abs{Dg}(K)$ denotes the total 
variation of $g$ on $K$.

We recall that any Cauchy problem 
associated with \eqref{scl-heter}, with $f$ 
satisfying \eqref{assum:mff}, whose combined 
vanishing viscosity and smooth flux approximation  
yields a bounded family of solutions that 
is strongly precompact in $L^1_{\loc}(\R^+\times\R^d)$, 
indeed admits a quasi-solution 
(see, e.g., \cite{Panov:2010aa}). Before establishing 
the regularity of such quasi-solutions 
(under a non-degeneracy condition), 
we need a technical lemma.

Assumption ({\bf H2}) ensures that drift 
vectors $\mff(\mx,\mlambda)$ with $BV$-regularity 
in $\mx$ are admissible, so in particular 
discontinuities in the spatial variable are 
allowed. This follows from standard properties of 
$BV$ functions (cf.~\cite[Theorem 14.9]{Leoni:2009aa}). 
For completeness, we provide 
the following elementary lemma.

\begin{lemma}\label{lem:BV_H2}
Let $K\subset\mathbb{R}^d$ be bounded 
and $0<s<1$, $q\ge 1$ with $sq<1$.  
If $V\in BV(K)\cap L^\infty(K)$, 
then $V\in W^{s,q}(K)$ and
\begin{equation}\label{eq:V-fractional}
	[V]_{W^{s,q}(K)}^{q}
	\le \frac{C_{d,K}
	\bigl(2\norm{V}_{L^\infty(K)}
	\bigr)^{q-1}}{1-sq}\abs{DV}(K_1)
	+C_{d,K}\,\bigl(2\norm{V}_{L^\infty(K)}
	\bigr)^{q},
\end{equation}
where
$$
[V]_{W^{s,q}(K)}^{q}
:=\iint_{K\times K}
\frac{\abs{V(\mx)-V(\my)}^{q}}{\abs{\mx-\my}^{d+sq}}
\,d\mx\,d\my,
$$
$\abs{DV}(K_1)$ denotes the total variation of $V$ on 
$K_1:=\seq{\mx:\mathrm{dist}(\mx,K)<1}$, 
and the constant $C_{d,K}$ 
depends only on $d$ and $K$.
\end{lemma}

\begin{proof}
Let $M=\norm{V}_{L^\infty(K)}$ and set $\sigma=sq\in(0,1)$.  
For $a,b\in[-M,M]$ we have 
$\abs{a-b}^q\le(2M)^{\,q-1}\abs{a-b}$. Hence
$$
[V]_{W^{s,q}(K)}^{q}
\le (2M)^{\,q-1}\iint_{K\times K}
\frac{\abs{V(\mx)-V(\my)}}{\abs{\mx-\my}^{d+\sigma}}
\,d\mx\,d\my.
$$
Split the integral into $\seq{\abs{\mx-\my}\le 1}$ 
and $\seq{\abs{\mx-\my}>1}$. For $\abs{\mx-\my}>1$, 
use that the integrand is bounded by 
$2M$ and $K$ is a bounded set. For 
$\abs{\mx-\my}\le 1$, write $\mh=\my-\mx$ and 
use the $BV$ translation estimate 
\cite[Theorem 13.48]{Leoni:2009aa}
$$
\int_K \abs{V(\mx+\mh)-V(\mx)}\,d\mx 
\le \abs{\mh}\abs{DV}(K_{\abs{\mh}})
\le \abs{\mh}\abs{DV}(K_1),
$$
which yields
$$
\iint_{\abs{\mx-\my}\leq 1}
\frac{\abs{V(\mx)-V(\my)}}
{\abs{\mx-\my}^{d+\sigma}}\,d\mx\,d\my
\le \abs{DV}(K_1)\int_{B(0,1)}
\frac{1}{|\mh|^{d+\sigma-1}}\,d\mh,
$$
where the last integral equals
$\abs{\bS^{d-1}}/(1-\sigma)=:C_d/(1-\sigma)$. 
Putting everything together gives 
\eqref{eq:V-fractional}.
\end{proof}

We have the following theorem.

\begin{theorem}\label{thm:LPT}
Assume that the flux $f$ 
satisfies \eqref{assum:mff}. If the associated 
drift vector $\mff:=\pa_\lambda \mff$ 
also satisfies the non-degeneracy 
condition \eqref{non-deg}, then there 
exist $\beta>0$ and $r>1$ such that 
$u\in W^{\beta,r}_{\loc}(\R^+\times \R^d)$.
\end{theorem}  

\begin{proof}
By differentiating \eqref{quasi} with 
respect to $\lambda$,  we conclude in a 
standard manner (see, e.g., \cite{Perthame:2002qy})  
that $\chi(t,\mx,\lambda):=\sign(u(t,\mx)-\lambda)$ 
satisfies the following kinetic equation  
in the sense of distributions:
\begin{equation*}
	\pa_t \chi
	+\Div_{\mx}\bigl(\mff(\mx,\lambda)\chi\bigr)
	=\pa_\lambda \gamma,
\end{equation*}
where $\mff=\pa_\lambda f$ is the kinetic drift.

Assumption \eqref{assum:mff} guarantees that 
({\bf H2}) holds,  ({\bf H1}) is satisfied 
with $p=\infty$ (since $\abs{\chi}\le 1$),  
and ({\bf H3}) is trivially fulfilled.  
Since $\mff$ satisfies the non-degeneracy condition,  
Theorem \ref{thm-1} together 
with Lemma \ref{lem:BV_H2} yields that,  
for any $\rho\in C_c^1(\R)$,
\begin{equation}\label{reg-scl}
	\int_{\R}\chi(t,\mx,\lambda)\rho(\lambda)\, d\lambda 
	\in W_{\loc}^{\beta,r}((0,\infty)\times \R^d)
\end{equation}
for some $\beta>0$ and $r>1$.

If the entropy solution $u$ is 
bounded by a constant $M$,  
and $\rho\in C^1(\R)$ is chosen 
so that $\rho=1$ on $[-M,M]$,  
then \eqref{reg-scl} gives
$$
\int_{\R}\chi(t,\mx,\lambda)
\rho(\lambda)\, d\lambda
= 2 u(t,\mx)
\in W_{\loc}^{\beta,r}(\R^+\times \R^d),
$$
which establishes the fractional 
Sobolev regularity of the  quasi-solution $u$ 
to the heterogeneous conservation law \eqref{scl-heter}.
\end{proof}

\subsection{Isentropic gas dynamics}\label{subsec:isentropic}

We will now apply our findings to the isentropic 
gas dynamics equations (see, for example, 
\cite[Chapter 17.8]{Dafermos:2016aa}). 
First, let us recall the following theorem:

\begin{theorem}[{\cite[Theorem 3]{Lions:1994aa}}]
Define 
$$
p(\varrho)=\kappa \varrho^\gamma, 
\qquad 
\kappa=\frac{(\gamma-1)^2}{4\gamma},
$$
and suppose $\gamma > 1$. 
Let $(\varrho,\varrho u) \in L^\infty(\R^+;L^1(\R))$ 
have finite energy, i.e., $\varrho u^2+\varrho^\gamma
\in L^\infty(\R^+;L^1(\R^d)$, and $\varrho \ge 0$.  
Then $(\varrho, u)$ is an entropy 
solution of the isentropic gas dynamics system
\begin{equation}\label{eq:isentropic}
	\begin{cases}
		\partial_t \varrho+\partial_x (\varrho u)=0, 
		\\
		\partial_t (\varrho u)
		+\partial_x \bigl( \varrho u^2
		+p(\varrho) \bigr)=0, \quad 
		t\ge 0,\,\, x\in \R,
	\end{cases}
\end{equation}
if and only if there exists a non-positive 
bounded measure $m$ on $\R^+ \times \R^2$ 
such that the function (weak 
entropy kernel)
$$
\tilde{\chi}(t,x,\lambda)
=\chi(\varrho;\lambda-u)
:=\bigl(\varrho^{\gamma-1}
-(\lambda-u)^2\bigr)_+^{\mu}, 
\qquad \mu
=\frac{3-\gamma}{2(\gamma-1)},
$$
satisfies (in the sense of distributions) 
the kinetic equation
\begin{equation}\label{eq:kinetic}
	\partial_t \tilde{\chi}
	+\partial_x \Bigl(
	\bigl(\theta \lambda
	+(1-\theta)u \bigr)\tilde{\chi}\Bigr)
	=\partial^2_\lambda m(t,x,\lambda),
\end{equation}
where $\theta=\frac{\gamma-1}{2}$ is a constant 
and $(w)_+^{\mu}:=\max\bigl\{0,\abs{w}^{\mu-1}w\bigr\}$.
Moreover, if $\varrho,\varrho u$ are $C^1$ in 
an open subset $\cO \subset \R^+ \times \R$, 
then $m=0$ for $\cO\times\R$.
\end{theorem}

We note that \eqref{eq:kinetic} 
generalizes the kinetic equation 
for quasi-solutions of scalar conservation laws. The key 
new feature is that the drift 
$\theta\lambda + (1-\theta)u$ depends 
on the unknown gas velocity $u=u(t,x)$, which 
may be highly irregular in $(t,x)$; this 
heterogeneity prevents the use of 
standard kinetic regularity techniques. 
Indeed, in \cite{Lions:1994aa}, the authors 
only establish strong compactness for bounded entropy 
solutions of \eqref{eq:isentropic}, 
but their result provides no quantitative 
regularity (except in the special case $\gamma=3$, where 
$\theta=1$ and the heterogeneity disappears). 
Using Theorem \ref{thm-1}, we prove that imposing 
a regularity condition on the velocity field $u$ (for 
instance, $u\in BV$) leads, via the kinetic 
formulation \eqref{eq:kinetic}, to 
quantitative regularity for the 
density $\varrho$ (in the case $\gamma\neq 3$).

\begin{theorem}\label{th:isentropic-regularity}
Assume that the pair $(u,\varrho)$ represents 
a locally bounded entropy solution to 
the isentropic gas dynamics system 
\eqref{eq:isentropic} with $\gamma\neq 3$ 
(see Theorem \ref{thm:LPT}), 
and that $u\in W^{\beta,r}_{\loc}([0,T]\times \R)$ 
for some $\beta>0$ and $r>1$. Then $\varrho\in 
W_{\loc}^{\bar{\beta},\bar{r}}(\R^+\times \R)$ 
for some exponents $\bar{\beta}>0$ and $\bar{r}>1$. 
\end{theorem}

\begin{proof}
The aim is to to apply Theorem \ref{thm-1} 
on \eqref{eq:kinetic}, given 
the regularity assumption on $u$.	
First, note that, since $\theta\neq 0$, the 
flux $\big( \theta \lambda+(1-\theta)u \big)$
satisfies the non-degeneracy condition \eqref{non-deg}. 
More precisely, we have
\begin{equation*}
	\begin{split}
		&\meas\Bigl\{\lambda\in L:
		\abs{\theta \lambda+(1-\theta)u}\leq \eps\Bigr\}
		\\ & \quad
		= \meas\Bigl\{\lambda\in L:
		\frac{-\eps-(1-\theta)u}{\theta}
		\leq \lambda \leq 
		\frac{\eps-(1-\theta)u}{\theta}\Bigr\} 
		\sim \eps,
	\end{split}
\end{equation*} 
uniformly with respect to 
$(t,\mx)\in K\subset\subset \R^+\times\R$ 
for a fixed $K$.

Since both $\rho$ and $u$ are locally bounded, 
hypothesis ({\bf H1}) is satisfied with 
$p=\infty$. The same local boundedness, along 
with the additional Sobolev regularity of $u$, 
ensures ({\bf H2}), while ({\bf H3}) 
is trivially satisfied.  

By Theorem \ref{thm-1} (see also 
Remark \ref{rem:t-dep}), we obtain 
for almost every $(t,x)\in\R^+\times\R$ and 
for any $\rho\in C^1_c(\R^+\times\R)$ that
\begin{equation}\label{eq:reg-isent-1}
	 \int_{\R}\rho(\lambda)
	 \tilde{\chi}(t,x,\lambda)\,d\lambda
	 \in  W_{\loc}^{\bar{\beta},\bar{r}}
	 (\R^+\times \R^d).
\end{equation} 
Let us now show how this implies 
the assertion of the theorem.

Set $a:=\varrho^\theta$. 
The kinetic function 
can be expressed in the form
$$
\tilde{\chi}(t,\mx,\lambda)
=\chi(\varrho;\lambda-u)
= c_\gamma\Bigl(a^{2}
-(\lambda-u)^2\Bigr)_{+}^{\mu},
$$ 
where the normalization constant $c_\gamma$ 
is chosen so that the zeroth $\lambda$-moment 
(i.e., taking $\rho(\lambda)\equiv 1$ 
in \eqref{eq:reg-isent-1}) recovers 
the density $\varrho$ (see below).
Furthermore, since we assume that $\varrho$ and $u$ 
are locally bounded, for any 
$K \subset \subset \R^+
\times \R$, it must be that 
$$
\bigcup_{(t,x)\in K}
\supp\bigl(\tilde{\chi}(t,\mx,\cdot_\lambda)
\bigr)\subset (-R(K),R(K))
$$
for a positive constant $R$ depending on the set $K$. 
More precisely, since
$$
\tilde{\chi}(t,\mx,\lambda)\neq 0
\quad
\text{for $\lambda \in (u-a,u+a)$},
$$
the change of variables
$$
y=\frac{\lambda-u}{a}, 
\quad 
y\in[-1,1],
\quad 
d\lambda=a\,dy,
$$
yields, for any $\rho\in C^1_c(\R)$ 
with $\rho\equiv 1$ on $(-R(K),R(K))$ 
and for a.e.~$(t,\mx)\in K$, the zeroth moment
\begin{align*}
	\int_{\R} \rho(\lambda)
	\chi(\varrho;\lambda-u)\, d\lambda 
	& =\int_{\R} \chi(\varrho;\lambda-u)\, d\lambda
	=c_\gamma \, a \int_{-1}^1 
	\bigl(a^2(1-y^2)\bigr)^{\mu}\,dy 
	\\ & =c_\gamma a^{2\mu+1}
	\int_{-1}^1(1-y^2)^{\mu}\,dy.
\end{align*}
Since $2\mu+1 = 1/\theta$, 
we have $a^{2\mu+1}
=(\varrho^\theta)^{1/\theta} = \varrho$.
Denote
$$
B_{\mu}
:=\int_{-1}^1(1-y^2)^{\mu}\,dy
=\frac{\Gamma\left(\tfrac{1}{2}\right)
\Gamma(\mu+1)}
{\Gamma\left(\mu
+\tfrac{3}{2}\right)},
$$
where $\Gamma$ denotes the Euler gamma function, 
and $\Gamma\left(\tfrac{1}{2}\right)=\sqrt{\pi}$. 
Specifying $c_\gamma=\frac{1}{B_{\mu}}$, 
we find that for almost 
every $(t,x)\in K$,
$$
\int_{\R}\rho(\lambda)
\tilde{\chi}(\varrho; \lambda-u)
\,d\lambda = \varrho.
$$
Comparing this with \eqref{eq:reg-isent-1}, 
we conclude that the 
proof is complete.
\end{proof}

\section*{Acknowledgement}

The research of ME is supported by 
the Croatian Science Foundation (UIP-2025-02-1337). 
The research of KHK is supported by the Research Council 
of Norway (351123/NASTRAN), and that 
of DM is partly supported by the 
Austrian Science Fund (P~35508).



\end{document}